\title{Recurrence Relations for Graph Polynomials
        on Bi-iterative Families of Graphs}
\thanks{The first author would like to acknowlege support by the Austrian National Research
    Network S11403-N23 (RiSE) of the Austrian Science Fund (FWF) and
    by the Vienna Science and Technology Fund (WWTF) through
    grants PROSEED, ICT12-059, and VRG11-005.
}
\author{Tomer Kotek}
\address{Institute for information systems \\ Vienna University of Technology}
\email{kotek@tuwien.forsyte.at}
\author{Johann A. Makowsky}
\address{Department of Computer Science \\ Technion --- Israel Institute of Technology \\ Haifa, Israel}
\email{janos@cs.technion.ac.il}
\newcommand{\MSOL}{MSOL}
\newcommand{\MS}{MS}
\newcommand{\FOL}{FOL}
\newcommand{\CMSOL}{CMSOL}
\newcommand{\CMSOLs}{CMSOL~}
\newcommand{\MSOLs}{MSOL~}
\newcommand{\Ct}{C${}^2$}
\newcommand{\Ctem}{C${\,}^2$}
\numberwithin{equation}{section}
\numberwithin{figure}{section}
\theoremstyle{plain}
\newtheorem{thm}{\protect\theoremname}
  \theoremstyle{definition}
  \newtheorem{defn}[thm]{\protect\definitionname}
  \theoremstyle{definition}
  \newtheorem{example}[thm]{\protect\examplename}
  \theoremstyle{plain}
  \newtheorem{lem}[thm]{\protect\lemmaname}
  \theoremstyle{remark}
  \newtheorem{rem}[thm]{\protect\remarkname}
  \theoremstyle{plain}
  \newtheorem{cor}[thm]{\protect\corollaryname}
  \theoremstyle{definition}
  \newtheorem{problem}[thm]{\protect\problemname}
  \providecommand{\corollaryname}{Corollary}
  \providecommand{\definitionname}{Definition}
  \providecommand{\examplename}{Example}
  \providecommand{\lemmaname}{Lemma}
  \providecommand{\problemname}{Problem}
  \providecommand{\remarkname}{Remark}
\providecommand{\theoremname}{Theorem}
\begin{document}

\begin{abstract}
We show that any graph polynomial from a wide class of graph polynomials
yields a recurrence relation on an infinite class of families of graphs. The
recurrence relations we obtain have coefficients which themselves
satisfy linear recurrence relations. We give explicit applications to the Tutte polynomial
and the independence polynomial. Furthermore, we get that for any
sequence $a_{n}$ satisfying a linear recurrence with constant coefficients,
the sub-sequence corresponding to square indices $a_{n^{2}}$ and
related sub-sequences satisfy recurrences with recurrent coefficients. 
\end{abstract}

\maketitle
\sloppy


\section{Introduction}

Recurrence relations are a major theme in the study of graph polynomials.
As early as 1972, N. L. Biggs, R. M. Damerell and D. A. Sands \cite{ar:BiggsDS72}
studied sequences of Tutte polynomials which are C-finite, i.e. satisfy
a homogenous linear recurrence relation with constant coefficients (or equivalently, sequences of 
coefficients of rational power series). 
More recently, M. Noy and A. Rib\'o \cite{ar:NoyRibo04} proved that
over an infinite class of {\em recursively constructible families of graphs},
which includes e.g. paths, cycles, ladders and wheels, the Tutte polynomial
is C-finite (see also \cite{ar:BoueyGOP2012}). The Tutte polynomials
of many recursively constructible families of graphs received special
treatment in the literature. Moreover, the Tutte polynomial can be
defined through its famous deletion-contraction recurrence relation.

Similar recurrence relations have been studied for other graph polynomials,
e.g. for the independence polynomial see e.g. \cite{pr:LevitMandrescu2005,ar:Wang2011}.
E. Fischer and J. A. Makowsky \cite{pr:FischerMakowsky08} extended
the result of Noy and Rib\'o to an infinite class of graph polynomials
definable in Monadic Second Order Logic (\MSOL), which includes the
matching polynomial, the independence polynomial, the interlace polynomial,
the domination polynomial and many of the graph polynomials which
occur in the literature. \cite{pr:FischerMakowsky08} applies to the wider class
of {\em iteratively constructible graph families}. 
The class of \MSOL-polynomials and variations
of it were studied with respect to their combinatorial and computational
properties e.g. in \cite{ar:CourcelleMR2000,pr:GodlinKotekMakowsky08,pr:KotekMakowskyCSL12,ar:Makowsky2005}.
L. Lov{\'a}sz treats \MSOL-definable graph invariants in \cite{bk:Lovasz2012}.

In this paper we consider recurrence relations of graph polynomials
which go beyond C-finiteness. A sequence is {\em\Ctem-finite} if
it satisfies a linear recurrence relation with C-finite coefficients.
We start by investigating the set of \Ct-finite sequences. The tools
we develop apply to sparse sub-sequences of C-finite sequences. While
C-finite sequences have received considerable attention in the literature,
cf. e.g. \cite[Chapter 4]{bk:StanleyI}, and it is well-known that
taking a linear sub-sequence $a_{qn+r}$ of a C-finite sequence $a_{n}$
yields again a C-finite sequence, it seems other types of sub-sequences
have not been systematically studied. We show the following:
\begin{thm}
\label{mainth:2} Let $a_{n}$ be a C-finite over $\mathbb{C}$. Let
$c\in\mathbb{N}^{+}$ and $d,e\in\mathbb{Z}$. Then the sequence 
\[
b_{n}=a_{c\binom{n}{2}+dn+e}
\]
 is \Ctem-finite.
\end{thm}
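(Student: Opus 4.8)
\emph{Proof idea.} The plan is to rewrite $b_n=a_{m(n)}$, where $m(n):=c\binom n2+dn+e$, as the first coordinate of a vector sequence governed by a first-order recurrence whose transition matrix has C-finite entries in $n$, and then to read off from it a scalar recurrence with C-finite coefficients by a linear-algebra argument over the ring of C-finite sequences. Concretely: since $a_n$ is C-finite it satisfies a homogeneous linear recurrence of some order $R$ with constant coefficients; let $C\in\mathbb{C}^{R\times R}$ be the associated companion matrix, so that the state vectors $s_k:=(a_k,a_{k+1},\dots,a_{k+R-1})^{\mathsf{T}}$ satisfy $s_{k+1}=Cs_k$ and hence $s_{k+\ell}=C^{\ell}s_k$ for all $\ell\ge 0$. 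Because $c\ge 1$, we have $m(n+k)-m(n)=ckn+c\binom k2+dk\ge 0$ for all $0\le k\le R$ once $n\ge n_0$ (and $m(n)\ge 0$ there too), for a suitable $n_0$. Put $\vec b_n:=s_{m(n)}$ for $n\ge n_0$; then $b_n=(\vec b_n)_1=a_{m(n)}$ and
\[
\vec b_{n+k}=C^{\,m(n+k)-m(n)}\vec b_n=(C^{ck})^{n}\,C^{\,c\binom k2+dk}\,\vec b_n\qquad(0\le k\le R),
\]
so that $b_{n+k}=w_k(n)^{\mathsf{T}}\vec b_n$, where $w_k(n)^{\mathsf{T}}:=e_1^{\mathsf{T}}(C^{ck})^{n}\,C^{\,c\binom k2+dk}$.

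For each fixed $k$, the entries of the matrix power $(C^{ck})^{n}$ are, by Cayley--Hamilton, fixed $\mathbb{C}$-linear combinations of C-finite sequences in $n$, hence themselves C-finite; so each $w_k$ is a vector in $\mathcal{C}^{R}$, where $\mathcal{C}$ denotes the commutative $\mathbb{C}$-algebra of C-finite sequences (which is closed under termwise sum and termwise product). Now $w_0,w_1,\dots,w_R$ are $R+1$ elements of the free $\mathcal{C}$-module $\mathcal{C}^{R}$ of rank $R$, and over any commutative ring more elements than the rank of a free module must satisfy a nontrivial linear relation (McCoy's theorem); thus there are $c_0,\dots,c_R\in\mathcal{C}$, not all zero, with $\sum_{k=0}^{R}c_k w_k=0$ identically. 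Contracting with $\vec b_n$ gives $\sum_{k=0}^{R}c_k(n)\,b_{n+k}=0$ for all $n\ge n_0$, a linear recurrence for $(b_n)$ with C-finite coefficients. To pass from $(b_n)_{n\ge n_0}$ to the full sequence $(b_n)_{n\ge 0}$, I would multiply the corresponding recurrence operator by the polynomial $\prod_{0\le i<n_0}(n-i)$, which vanishes on $\{0,\dots,n_0-1\}$ and hence yields a recurrence valid for all $n\ge 0$ whose coefficients are still C-finite; therefore $b_n$ is \Ctem-finite.

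I expect the linear-dependence step to be the crux. Pointwise over $\mathbb{C}$ the vectors $w_0(n),\dots,w_R(n)$ are of course dependent for every individual $n$, but the coefficients of such a dependence need not vary C-finitely with $n$, which forces one to argue over the ring $\mathcal{C}$ itself; this is delicate because $\mathcal{C}$ is neither an integral domain (e.g.\ $(1,0,1,0,\dots)\cdot(0,1,0,1,\dots)=0$) nor Noetherian, and it is precisely here that a ring-theoretic input valid over arbitrary commutative rings, such as McCoy's theorem, is needed. A more structural alternative would be to write $a_m=\sum_i p_i(m)\lambda_i^{\,m}$ in closed form, substitute $m=m(n)$ to obtain $b_n=\sum_i p_i(m(n))\,\lambda_i^{\,e}(\lambda_i^{\,d})^{n}(\lambda_i^{\,c})^{\binom n2}$, observe that $p_i(m(n))$ is a polynomial in $n$ and that $g_n:=\mu^{\binom n2}$ is \Ctem-finite because $g_{n+1}=\mu^{n}g_n$, and then conclude via closure of \Ctem-finite sequences under termwise sums and under termwise products with C-finite sequences; but establishing those closure properties again rests on the same non-Noetherian, non-domain phenomenon, so it is essentially the same obstacle in a different guise.
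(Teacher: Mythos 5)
Your reduction to a vector recurrence $\vec b_{n+k}=(C^{ck})^{n}C^{c\binom{k}{2}+dk}\vec b_n$ with C-finite transition data is essentially the same as the paper's first step (the paper uses $\overline{u_{\zeta(n)}}=M^{cn+d}\overline{u_{\zeta(n-1)}}$ and then its Lemma~\ref{th:matrix-to-rec-numbers}), and you correctly identify the linear-algebra step over the non-Noetherian, non-domain ring $\mathcal{C}$ of C-finite sequences as the crux. However, McCoy's theorem does not close the gap. It yields a relation $\sum_{k=0}^{R}c_k w_k=0$ with the $c_k\in\mathcal{C}$ not all the zero sequence, but ``nonzero'' in $\mathcal{C}$ only means not identically zero. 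Definition~\ref{def:Ct-finite}(3) demands a leading coefficient that is nonzero \emph{at every} $n$, and a nonzero C-finite sequence can vanish on an entire infinite arithmetic progression (e.g.\ $\mathbb{I}_{n\equiv 0\,(\mathrm{mod}\,2)}$); by the Skolem--Mahler--Lech theorem this is exactly the kind of zero set that occurs. Such a degeneracy cannot be repaired by re-indexing, and your proposed fix of multiplying by $\prod_{0\le i<n_0}(n-i)$ makes things worse: it forces the leading coefficient to vanish at $n=0,\dots,n_0-1$, directly contradicting the definition rather than satisfying it.

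The paper's Lemma~\ref{th:matrix-to-rec-numbers} is built precisely to handle this. It picks, for each $n$, the maximal suffix $N_n^{\{s_n\}},\dots,N_n^{\{r^2-1\}}$ of the vectorized iterated products that is linearly independent, expresses $N_n^{\{r^2\}}$ in terms of them via the Moore--Penrose pseudo-inverse with denominators cleared through the cofactor matrix, invokes Skolem--Mahler--Lech (Lemma~\ref{lem:c-finiteness-matrices}(4)) to show the rank profile $s_n$ is eventually periodic with some period $p$, and then stitches the finitely many residue classes together using the indicator sequences $\mathbb{I}_{n\equiv i\,(\mathrm{mod}\,p)}$. By construction the resulting leading coefficient $q_n^{\{r^2\}}$ equals, at each large $n$, a determinant that is nonzero by the choice of $s_n$, so the ``nowhere zero'' requirement is met. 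That periodicity-plus-indicator machinery, driven by Skolem--Mahler--Lech, is the ingredient your McCoy-based argument lacks, and it is not optional: without it the recurrence you produce need not certify \Ctem-finiteness in the sense of Definition~\ref{def:Ct-finite}. (Your ``structural alternative'' via the exponential-polynomial closed form runs into the same obstruction, as you anticipate: the closure of \Ctem-finite sequences under termwise sum is proved in the paper as a corollary of the same Lemma~\ref{th:matrix-to-rec-numbers}.)
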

In particular, $a_{n^{2}}$ and $a_{\binom{n}{2}}$ are \Ct-finite.
The proof of Theorem \ref{mainth:2} is given in Section \ref{se:mainth:2-proof}.
As an explicit example, we consider the Fibonacci numbers in Section
\ref{se:fibonacci}. 

Next, we show \MSOL-polynomials satisfy \Ct-recurrences on appropriate
families of graphs. In Section \ref{se:bi-iterative} we introduce
the notion of {\em bi-iteratively constructible graph families},
or {\em bi-iterative families} for short. In Section \ref{se:msol}
we recall from the literature the definitions of two related classes
of \MSOL-polynomials and introduce a powerful theorem for them. The
main theorem of the paper is:
\begin{thm}
[Informal] \label{mainth:1} \MSOL-polynomials satisfy \Ctem-finite
recurrences on bi-iterative families.
\end{thm}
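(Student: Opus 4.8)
The plan is to combine the Feferman--Vaught--type reduction for \MSOL-polynomials (the ``powerful theorem'' recalled in Section~\ref{se:msol}) with the composition phenomenon behind Theorem~\ref{mainth:2}. First I would set up the transfer machinery: for an \MSOL-polynomial $P$ of quantifier rank $q$ and a fixed number $k$ of boundary elements there is a finite index set $T$ of \MSOL-$q$-types, a ``reduced'' vector $\bar P(G)\in R^{T}$ with $R=\mathbb{Z}[\bar X]$ collecting the contribution of each type, and, for each fixed boundaried gluing operation, a bilinear map over $R$ (with coefficients that are fixed monomials in $\bar X$) that computes $\bar P$ of the glued graph from $\bar P$ of its pieces. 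By the definition of a bi-iterative family (Section~\ref{se:bi-iterative}) there is an iteratively constructible auxiliary family $H_m$ --- so $\bar P(H_m)=A\,\bar P(H_{m-1})$ for a fixed $A\in R^{T\times T}$ and hence $\bar P(H_m)=A^{m}\bar P(H_0)$ --- together with fixed boundaried operations such that $\bar P(G_n)=C_n\,\bar P(G_{n-1})$, where the entries of $C_n$ are fixed $R$-linear combinations of the coordinates of $\bar P(H_n)=A^{n}\bar P(H_0)$. Consequently each entry of $C_n$ is, as a function of $n$, a C-finite sequence over $R$ (it satisfies the linear recurrence whose coefficient vector is the characteristic polynomial of $A$, by Cayley--Hamilton), and
\[
\bar P(G_n)=C_n C_{n-1}\cdots C_1\,\bar P(G_0),
\qquad
P(G_n)=\langle v,\bar P(G_n)\rangle \quad\text{for a fixed } v\in R^{T}.
\]

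Next I would make the quadratic growth explicit. In the simplest case the bi-iterative construction applies a single fixed operation a total of $c\binom{n}{2}+dn+e$ times, so $\bar P(G_n)=M^{\,c\binom{n}{2}+dn+e}\bar P(G_0)$; each entry of $M^{\,c\binom{n}{2}+dn+e}$ is a C-finite sequence composed with the quadratic $c\binom{n}{2}+dn+e$, hence \Ct-finite by Theorem~\ref{mainth:2} applied over the field $\mathbb{Q}(\bar X)$ instead of $\mathbb{C}$ (the proof in Section~\ref{se:mainth:2-proof} goes through over any field, after which one clears denominators to return to coefficients in $R$). In general one has the running product $C_n\cdots C_1$ of matrices whose entries are C-finite in $n$; writing $C_n=\sum_{t}p_t(n)\mu_t^{\,n}D_t$ in polynomial-times-geometric form and expanding the product, the ``slow'' geometric factors at positions $1,\dots,n$ accumulate, and in the one-block case they contract exactly to the exponent $\binom{n+1}{2}=\binom{n}{2}+n$ of Theorem~\ref{mainth:2}, multiplied by a C-finite matrix sequence. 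I would therefore prove, and then use, the statement that the running products arising from a bi-iterative family are entrywise \Ct-finite; together with closure of \Ct-finite sequences under linear combinations and under $\langle v,\cdot\rangle$, this delivers a \Ct-finite recurrence for $P(G_n)$ whose coefficients are themselves C-finite sequences, which is Theorem~\ref{mainth:1}.

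The main obstacle is that last structural claim. Unlike a linear sub-sequence $a_{qn+r}$, a running product corresponds to a product over a \emph{growing} window, so C-finiteness genuinely fails and one must control precisely how far it degrades; the non-commutativity of the matrices $D_t$ expands the product into an exponentially large sum indexed by words over $T$, and the real work is to reorganise this sum into finitely many pieces, each of which is recognisable as a $\binom{n}{2}$-composition of a C-finite sequence as in Theorem~\ref{mainth:2} --- equivalently, to build an enlarged transfer system on the finite-dimensional algebra generated by the $D_t$ and show that its ``time-ordered'' powers stay \Ct-finite. This rests on the closure properties of \Ct-finite sequences over $\mathbb{Q}(\bar X)$ developed in the first part of the paper, in particular closure under Hadamard product and under the map $a_n\mapsto\mu^{\,n}a_n$; these are of the same character as the C-finite facts used by Noy--Rib\'o and by Fischer--Makowsky, lifted one level up, but verifying that everything remains uniform over the parameter ring $R$ is where the care is needed.
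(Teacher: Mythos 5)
Your first step---the Feferman--Vaught reduction to a matrix recurrence $\bar P(G_{n+1}) = M_n\,\bar P(G_n)$ with $M_n = M_H M_F^n M_L$ a C-finite matrix sequence---is exactly the paper's Lemma \ref{lem:fv-bi-iterative}. The gap is in the second step, passing from this matrix recurrence to a \Ctem-finite scalar recurrence, and you have flagged it yourself as ``the main obstacle.'' You propose to analyze the running product $C_n C_{n-1}\cdots C_1$ directly, expanding each factor in polynomial-times-geometric form and reorganizing the resulting non-commutative sum into $\binom{n}{2}$-shaped pieces recognizable via Theorem \ref{mainth:2}. As written this is a programme, not an argument: the reorganization of a sum indexed by words over a non-commutative matrix algebra into finitely many \Ctem-recognizable pieces is precisely what would need to be proved, and the proposal stops at naming it as a goal.

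The paper closes the gap by a different and much cleaner route, Lemma \ref{th:matrix-to-rec-numbers}. Rather than controlling the accumulated product, it considers only the $r^2+1$ normalized short products $M_n^{\{0\}},\ldots,M_n^{\{r^2\}}$ over windows of length $0$ through $r^2$ starting at position $n$. Vectorized, these live in $\mathbb{F}^{r^2}$ and are therefore linearly dependent for every $n$; a canonical dependence is extracted with the Moore--Penrose pseudo-inverse, yielding a linear recurrence of \emph{bounded} order $r^2$ for $\bar v_n$ whose coefficients (determinants and cofactor entries of $N_{n,s}^* N_{n,s}$) are C-finite in $n$. The one subtlety is that the rank, and hence the length $s_n$ of the minimal dependence, may vary with $n$; the Skolem--Mahler--Lech theorem forces $s_n$ to be eventually periodic, and the finitely many residue classes are patched together with indicator sequences $\mathbb{I}_{n\equiv i\,(mod\, p)}$ into a single C-finite coefficient sequence. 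The bounded-order recurrence thus exists by a pure dimension count, with no explicit description of the growing product at all---that is the idea missing from your sketch. Two smaller points: running Theorem \ref{mainth:2} over $\mathbb{Q}(\bar X)$ does not go through verbatim, because the proof of Lemma \ref{th:matrix-to-rec-numbers} uses Hermitian transposes and Skolem--Mahler--Lech, both tied to subfields of $\mathbb{C}$; the paper instead substitutes algebraically independent reals for the indeterminates (Lemma \ref{lem:injective}) and lifts the recurrence back to the polynomial ring by injectivity (Lemma \ref{lem:polynomials}). And the closure of \Ctem-finite sequences under sums and (Hadamard) products that you invoke is in the paper a corollary of Lemma \ref{th:matrix-to-rec-numbers}; if you bypass that lemma you would need an independent proof of those closure properties.
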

Theorem \ref{mainth:1} shows the existence of the desired recurrence
relations. The exact statement Theorem \ref{mainth:1}, namely Theorem \ref{th:main-formal},
is given in Section \ref{se:mainth:1-proof} together with the proof. In Section \ref{se:examples}
we compute explicit \Ct-recurrences for the Tutte polynomial and
the independence polynomial. Finally, in Section \ref{se:conclusion}
we conclude and discuss future research.

\section{\Ct-finite sequences}

In this section we define the recurrence relations we are interested
in and give useful properties of sequences satisfying them.
\begin{defn}
\label{def:Ct-finite} Let $\mathbb{F}$ be a field. Let $a_{n}:\, n\in\mathbb{N}$
be a sequence over $\mathbb{F}$. 
\begin{enumerate}
\item $a_{n}$ is C-finite if there exist $s\in\mathbb{N}$ and $c^{(0)},\ldots,c^{(s)}\in\mathbb{F}$,
$c^{(s)}\not=0$, such that for every $n\geq s$, 
\[
c^{(s)}a_{n+s}=c^{(s-1)}a_{n+s-1}+\cdots+c^{(0)}a_{n}\,.
\]
We may assume w.l.o.g. that $c^{(s)}=1$. 
\item $a_{n}$ is P-recursive if there exist $s\in\mathbb{N}$ and $c_{n}^{(0)},\ldots,c_{n}^{(s)}$
which are polynomials in $n$ over $\mathbb{F}$, such that for every
$n$ we have $c_{n}^{(s)}\not=0$, and for every $n\geq s$, 
\begin{gather}
c_{n}^{(s)}a_{n+s}=c_{n}^{(s-1)}a_{n+s-1}+\cdots+c_{n}^{(0)}a_{n}\,.\label{eq:rec-def}
\end{gather}

\item $a_{n}$ is \Ct-finite if there exist $s\in\mathbb{N}$ and C-finite
sequences $c_{n}^{(0)},\ldots,c_{n}^{(s)}$, such that for every $n$
we have $c_{n}^{(s)}\not=0$, and for every $n\geq s$ , Eq. (\ref{eq:rec-def})
holds.
\end{enumerate}
\end{defn}
P-recursive (holonomic) sequences have been studied in their own right, but also
as the coefficients of Differentially finite generating functions
\cite{bk:StanleyII}, see also \cite{bk:A=B}. 
\begin{example}
[\Ct-finite sequences] Sequences with \Ct-finite recurrences emerge
in various areas of mathematics. 
\begin{enumerate}
\item The $q$-derangement numbers $d_{n}(q)$ are polynomials in $q$ related
to the set of derangements of size $n$. A formula for computing them
in analogy to the standard derangement numbers was found by I. Gessel
\cite{ar:Gessel93} and M. L. Wachs \cite{ar:Wachs1989}. This formula
implies that the following \Ct-recurrence holds:
\[
d_{n}(q)=(q^{n}+[n])d_{n-1}(q)-q^{n}[n]d_{n-2}(q)\,,
\]
see also \cite{ar:Desarmenien1993}. We denote here $[n]=1+q+q^2+\cdots +q^{n-1}$. 
\item In knot theory, the colored Jones polynomial of a framed knot $\mathcal{K}$
in $3$-space is a function from such knots to polynomials. The colored
Jones function of the $0$-framed right-hand trefoil satisfies the
following \Ct-recurrence \cite{ar:Garoufalidis05}:
\begin{eqnarray*}
J_{\mathcal{K}}(n) &=& \frac{x^{2n-2}+x^{8}y^{4n}-y^{n}-x^{2}y^{2n}}{x(x^{2n}y-x^{4}y^{n})}J_{\mathcal{K}}(n-1)\\
& & +\frac{x^{8}y^{4n}-x^{6}y^{2n}}{x^{4}y^{n}-x^{2n}y}J_{\mathcal{K}}(n-2)
\end{eqnarray*}
with $x=q^{1/2}$ and $y=x^{-2}$. See \cite{ar:Gelca2002} and \cite{ar:GelcaSain2003}
for more examples. 
\end{enumerate}
\end{example}
\begin{lem}
[Properties]~
\begin{enumerate}
\item Every C-finite sequence is P-recursive.
\item Every P-recursive sequence is \Ct-finite.
\item For every C-finite sequence $a_{n}$, there exists $\alpha\in\mathbb{N}$
such that $a_{n}\leq\alpha^{n}$ for every large enough $n$. 
\item For every P-recursive sequence $a_{n}$, there exists $\alpha\in\mathbb{N}$
such that $a_{n}\leq n!^{\alpha}$ for every large enough $n$. 
\item For every \Ctem-finite sequence $a_{n}$, there exists $\alpha\in\mathbb{N}$
such that $a_{n}\leq\alpha^{n^{2}}$ for every large enough $n$. 
\end{enumerate}
\end{lem}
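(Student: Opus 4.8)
These five statements fall into two groups. Parts (1) and (2) are inclusions among the three types of recurrence, and I would prove them by taking a recurrence of the narrower type and producing coefficient sequences that witness membership in the wider class. For (1): a C-finite recurrence, normalized so that $c^{(s)}=1$, is already a recurrence of the form (\ref{eq:rec-def}) with the constant-in-$n$ sequences $c_n^{(i)}:=c^{(i)}$, which are polynomials in $n$ with $c_n^{(s)}=1\neq 0$ for all $n$; hence $a_n$ is P-recursive. For (2): I would use that every polynomial $p(n)$ of degree $d$ is a C-finite sequence, since it is annihilated by its $(d+1)$-st finite difference, $\sum_{k=0}^{d+1}(-1)^{k}\binom{d+1}{k}\,p(n+d+1-k)=0$, which is a homogeneous linear recurrence with constant coefficients. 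So in a P-recursive recurrence every polynomial coefficient $c_n^{(i)}$ is C-finite, the requirement $c_n^{(s)}\neq 0$ for all $n$ persists, and (\ref{eq:rec-def}) now exhibits $a_n$ as \Ct-finite.

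Parts (3)--(5) all run through one elementary engine; I assume $\mathbb{F}\subseteq\mathbb{C}$ so that $|a_n|$ is meaningful. Rewrite the recurrence as $a_{n+s}=\sum_{i=0}^{s-1}(c_n^{(i)}/c_n^{(s)})\,a_{n+i}$ and set $M_n=\max_{0\le i\le s-1}|a_{n+i}|$, so that $M_{n+1}\le\max(M_n,\ (1/|c_n^{(s)}|)\sum_{i<s}|c_n^{(i)}|\,|a_{n+i}|)$. The plan is to bound $|c_n^{(i)}|$ from above and $|c_n^{(s)}|$ from below, deduce $M_{n+1}\le\rho_n M_n$ for $n$ large with an explicit multiplier $\rho_n$, telescope to $M_n\le M_{n_0}\prod_{k=n_0}^{n-1}\rho_k$, and absorb constants into the base. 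In case (3) the coefficients are constants, so with $C=1+\sum_{i<s}|c^{(i)}|$ we get $M_{n+1}\le C M_n$, hence $M_n\le M_{n_0}C^n$, and any integer $\alpha>C$ gives $|a_n|\le\alpha^{n}$ for $n$ large. Cases (4) and (5) are the same with $\rho_n$ now polynomial, respectively exponential, in $n$.

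For (4): take a common bound $|c_n^{(i)}|\le A n^{D}$ for $i<s$ and $n$ large, and observe that $\delta:=\inf_n|c_n^{(s)}|>0$ automatically, because $c_n^{(s)}$ is a nonzero polynomial restricted to $\mathbb{N}$ (its values either stay bounded below by a positive constant or tend to $\infty$). Then $\rho_k=O(k^{D})$, so $\prod_{k=n_0}^{n-1}\rho_k\le (\mathrm{const})^{n}(n!)^{D}$, and as $(\mathrm{const})^{n}\le n!$ eventually, $|a_n|\le (n!)^{D+1}$; thus $\alpha=D+1$ works. For (5): the $c_n^{(i)}$ are C-finite, so part (3) gives $\beta\ge 1$ with $|c_n^{(i)}|\le\beta^{n}$ for all $i\le s$ and $n$ large; granting the lower bound $|c_n^{(s)}|\ge\delta>0$ (discussed below) we get $\rho_k=O(\beta^{k})$, hence $\prod_{k=n_0}^{n-1}\rho_k\le(\mathrm{const})^{n}\beta^{\binom{n}{2}}\le(\mathrm{const})^{n}\beta^{n^{2}}$, which is at most $\alpha^{n^{2}}$ for a suitable integer $\alpha$; so $|a_n|\le\alpha^{n^{2}}$ for $n$ large.

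The only non-formal ingredient, and the step I would flag as the real obstacle, is the lower bound $|c_n^{(s)}|\ge\delta>0$ used in (4) and (5). In (4) it is free, as just noted. In (5) it is not automatic for an arbitrary nonzero C-finite sequence over $\mathbb{C}$: such a sequence may oscillate and approach $0$ super-exponentially fast along a sparse index set (Liouville-type behaviour). It does hold whenever $c_n^{(s)}$ is integer-valued or has uniformly bounded denominators --- the situation in all the graph-polynomial applications of this paper --- and more generally it follows from Diophantine lower bounds for non-degenerate linear recurrences; under that proviso the rest is pure bookkeeping, the only delicate parts being to keep the telescoped constants straight and to note that the bound on $M_n$ transfers to $|a_n|$ since $|a_m|\le M_{\max(0,\,m-s+1)}$.
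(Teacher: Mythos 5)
Parts (1)--(4) are correctly handled and flesh out what the paper dismisses with ``can be proven easily by induction on $n$''; the telescoping scheme is the intended one, and your remark that $\inf_n|c_n^{(s)}|>0$ is automatic when $c_n^{(s)}$ is a nonvanishing polynomial cleanly closes (4).

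Your caveat on part (5) is correct and in fact exposes that the bound $|a_n|\le\alpha^{n^2}$ is false over $\mathbb{C}$ (or even $\mathbb{R}$) without further hypotheses: a nonzero C-finite leading coefficient can approach $0$ super-exponentially fast along a sparse index set, and dividing by it then pushes $|a_n|$ past every $\alpha^{n^2}$. For instance, fix a real $\xi$ admitting infinitely many rational approximations with $|\xi-p_j/q_j|<2^{-2^{q_j}}$, set $\lambda=e^{2\pi i\xi}$ and $c_n=\lambda^n-1$. Then $c_n$ is C-finite (characteristic roots $\lambda$ and $1$) and never zero, and the recurrence $c_na_{n+1}=a_n$ with $a_1=1$ is a valid \Ct-recurrence, with $s=1$, $c_n^{(1)}=c_n\neq0$, $c_n^{(0)}=1$. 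Since $|c_k|\le 2$ always, $|a_{n+1}|=\prod_{k=1}^{n}|c_k|^{-1}\ge2^{-n}|c_n|^{-1}$; but $|c_{q_j}|\le 2\pi q_j2^{-2^{q_j}}$, so $|a_{q_j+1}|\ge 2^{2^{q_j}-q_j}/(2\pi q_j)$, which eventually exceeds $\alpha^{(q_j+1)^2}$ for every fixed $\alpha$. So (5) really does require the proviso you name, a uniform positive lower bound on $|c_n^{(s)}|$, which is available in all the paper's graph-polynomial applications (integer or rational coefficient sequences) and, more generally, when the characteristic roots of $c_n^{(s)}$ are algebraic, by Baker-type estimates. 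Under that proviso your telescoping yields exactly the claimed $\alpha^{n^2}$ bound, and your proposal --- caveat included --- is the right reading and proof of (5).
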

\begin{proof}
1 and 2 follow directly from Definition \ref{def:Ct-finite}. 3, 4
and 5 can be proven easily by induction on $n$. 
\end{proof}

The following will be useful, see e.g. \cite{bk:StanleyI}: 
\begin{lem}
[Closure properties] \label{lem:closure} The C-finite sequences are
closed under:
\begin{enumerate}
\item Finite addition;
\item Finite multiplication;
\item Given a C-finite sequence $a_{n}$, taking sub-sequences $a_{tn+s}$,
$t\in\mathbb{N}^{+}$ and $s\in\mathbb{Z}$. 
\end{enumerate}
\end{lem}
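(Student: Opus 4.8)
The plan is to reduce everything to a single operator-theoretic reformulation of C-finiteness and then verify each item by a finite-dimensionality count. Let $E$ denote the shift operator on sequences, $(Ex)_n = x_{n+1}$. I would first isolate the observation that a sequence $a_n$ over $\mathbb{F}$ is C-finite if and only if the $\mathbb{F}$-vector space $V_a := \mathrm{span}_{\mathbb{F}}\{E^k a : k\ge 0\}$ is finite-dimensional: a monic recurrence of order $s$ forces $E^s a$, hence inductively every $E^k a$, into $\mathrm{span}\{a,Ea,\ldots,E^{s-1}a\}$, so $\dim V_a\le s$; conversely, if $\dim V_a<\infty$ then $a,Ea,E^2a,\ldots$ is linearly dependent, and the first dependence $E^s a=\sum_{i<s}c_iE^ia$ --- an identity of sequences --- unwinds into the recurrence $a_{n+s}=\sum_{i<s}c_ia_{n+i}$ valid for all $n\ge 0$, which is C-finite with leading coefficient $1$.

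Given this, finite addition and multiplication reduce to the two-sequence case $a,b$. From $E^k(a+b)=E^ka+E^kb$ one gets $V_{a+b}\subseteq V_a+V_b$, which is finite-dimensional, so $a+b$ is C-finite. For the product, $E^k(ab)=(E^ka)(E^kb)$ pointwise; fixing $\mathbb{F}$-bases $f_1,\ldots,f_p$ of $V_a$ and $g_1,\ldots,g_q$ of $V_b$, every $E^ka$ is an $\mathbb{F}$-combination of the $f_i$ and every $E^kb$ one of the $g_j$, so $E^k(ab)$ is an $\mathbb{F}$-combination of the pointwise products $f_ig_j$. Hence $V_{ab}\subseteq\mathrm{span}_{\mathbb{F}}\{f_ig_j\}$, of dimension at most $pq$, and $ab$ is C-finite.

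For the sub-sequence $b_n=a_{tn+s}$ with $t\in\mathbb{N}^{+}$, I would first normalise to $s\ge 0$: replacing $s$ by $s+tN$ for $N$ large merely shifts the index of $b$ by a finite amount, which does not affect C-finiteness, so we may assume $tn+s\ge 0$ for all $n\ge 0$. Then introduce the $\mathbb{F}$-linear decimation map $D\colon\mathbb{F}^{\mathbb{N}}\to\mathbb{F}^{\mathbb{N}}$, $(Dx)_n=x_{tn+s}$. A one-line index check shows $E^kb=D(E^{tk}a)$ for all $k\ge 0$, both sides being $n\mapsto a_{tn+s+tk}$. Since each $E^{tk}a\in V_a$, we get $V_b\subseteq D(V_a)$, of dimension at most $\dim V_a<\infty$, so $b$ is C-finite, with recurrence order no larger than that of $a$.

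I do not anticipate a real obstacle: the whole proof is bookkeeping with shift-closures. The only step needing a genuine (if modest) idea is the product: bounding $\mathrm{span}\{(E^ia)(E^jb):i,j\ge 0\}$ factor-by-factor fails because $i,j$ run over all of $\mathbb{N}$, and the trick is to collapse each factor onto a fixed finite basis before multiplying. The lone technical nuisance is the index normalisation for negative $s$ in item (3), which is harmless because C-finiteness is an asymptotic property.
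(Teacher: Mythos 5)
The paper does not prove this lemma at all; it simply cites \cite{bk:StanleyI} and moves on, so there is no ``paper's proof'' for your argument to diverge from. Your proof is correct and self-contained. You characterise C-finiteness by finite-dimensionality of the shift-orbit $V_a=\mathrm{span}\{E^k a: k\ge 0\}$, and then all three closure properties fall out from $V_{a+b}\subseteq V_a+V_b$, $V_{ab}\subseteq\mathrm{span}\{f_ig_j\}$ for bases $\{f_i\}$, $\{g_j\}$ of $V_a$, $V_b$, and $V_b\subseteq D(V_a)$ for the decimation map $D$. This is a different route from the two standard textbook treatments: Stanley works with rational ordinary generating functions (sums, Hadamard products, and sections of rational power series are rational), while the other common proof encodes $a_n=\mathbf{u}^{\mathsf{T}}M^n\mathbf{v}$ via a companion matrix and uses direct sums, Kronecker products, and powers $M^t$. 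Your shift-orbit argument is essentially a coordinate-free form of the companion-matrix proof --- indeed $V_a$ is the cyclic $E$-module generated by $a$, and a basis of it gives exactly such a matrix representation --- but it avoids ever writing down transfer matrices or generating functions, and it yields the order bounds ($\dim V_a+\dim V_b$ for sums, $\dim V_a\cdot\dim V_b$ for products, $\dim V_a$ for decimations) for free. One small point worth making explicit for rigor against the paper's Definition~\ref{def:Ct-finite}: the definition there only requires the recurrence to hold for $n\ge s$, not for all $n\ge 0$, so the forward direction of your equivalence should observe that the recurrence as stated gives $E^{2s}a\in\mathrm{span}\{E^sa,\ldots,E^{2s-1}a\}$ (as an identity of sequences), whence $\dim V_a\le 2s$ after adjoining the finitely many initial shifts $a,\ldots,E^{s-1}a$; the converse direction, which is what the closure properties actually need, is exactly as you wrote it. The index normalisation for negative $s$ in item (3) is handled correctly.
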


The sets of C-finite sequences and P-recursive sequences form rings
with respect to the usual addition and multiplication. However, they
are not integral domains. For every $i\leq p$ and every $n$ let
\[
\mathbb{I}_{n\equiv i\,(mod\, p)}=\begin{cases}
1 & n\equiv i\,(mod\, p)\\
0 & n\not\equiv i\,(mod\, p)
\end{cases}
\]
For every $i\leq p$, $\mathbb{I}_{n\equiv i\,(mod\, p)}$ is C-finite.
While each of $\mathbb{I}_{n\equiv0\,(mod\,2)}$ and $\mathbb{I}_{n\equiv1\,(mod\,2)}$
is not identically zero, their product is. 
This obsticale  complicates our proofs in the sequel, and is overcome using
a classical theorem on the zeros of C-finite sequences:
\begin{thm}
[Skolem-Mahler-Lech Theorem] If $a_{n}$ is C-finite, then there
exist a finite set $I\subseteq\mathbb{N}$, $n_{1},p\in\mathbb{N}$,
and $P\subseteq\{0,\ldots,p-1\}$ such that 
\[
\{n\mid a_{n}=0\}=I\cup\bigcup_{i\in P}\{n\mid n>n_{1},\, n\equiv i\,(mod\, p)\}\,.
\]
\end{thm}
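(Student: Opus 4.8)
The Skolem--Mahler--Lech theorem is classical, and the plan is to follow the standard $p$-adic route via Strassmann's theorem. First I would reduce to a number field: since $a_n$ obeys a fixed linear recurrence, every term lies in the subfield of $\mathbb{C}$ generated over $\mathbb{Q}$ by the finitely many recurrence coefficients $c^{(0)},\dots,c^{(s)}$ and the initial values $a_0,\dots,a_{s-1}$, which is a finitely generated extension of $\mathbb{Q}$; a specialization argument (or a direct embedding into some $\mathbb{C}_p$) lets me assume without loss of generality that the $a_n$ lie in a number field $K$, with the vanishing set unchanged. Working over $K$, I would put $a_n$ into exponential-polynomial form $a_n = \sum_{j=1}^{k} P_j(n)\,\alpha_j^{\,n}$, where $\alpha_1,\dots,\alpha_k$ are the distinct nonzero roots of the characteristic polynomial of the recurrence and $P_j \in K[x]$ (after possibly enlarging $K$); the finitely many indices $n < s$, together with any contribution of a zero root, get absorbed into the finite set $I$ at the end.

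Next I would pick a prime of a suitable finite extension of $\mathbb{Q}_p$ in which all the $\alpha_j$ are units, i.e. $|\alpha_j|_p = 1$ --- possible because the $\alpha_j$ are finitely many and nonzero. Then there is an integer $M \geq 1$ such that each $\alpha_j^{M}$ lies close enough to $1$ for the $p$-adic logarithm and exponential series to converge on it. For a fixed residue $r \in \{0,1,\dots,M-1\}$, writing $\alpha_j^{\,Mn+r} = \alpha_j^{\,r}\exp\!\big(n\,\log\alpha_j^{M}\big)$ shows that the function
\[
f_r(x) \;=\; \sum_{j=1}^{k} P_j(Mx+r)\,\alpha_j^{\,r}\,\exp\!\big(x\,\log\alpha_j^{M}\big)
\]
is $p$-adic analytic on $\mathbb{Z}_p$ and satisfies $f_r(n) = a_{Mn+r}$ for all $n \in \mathbb{N}$.

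Finally I would apply Strassmann's theorem: a $p$-adic analytic function on $\mathbb{Z}_p$ that is not identically zero has only finitely many zeros. Hence for each $r$ either $f_r \equiv 0$, so that every $n$ with $n \equiv r \pmod M$ is a zero of $a_n$, or $f_r$ contributes only finitely many zeros. Taking $p := M$ in the statement, letting $P \subseteq \{0,\dots,M-1\}$ be the set of residues of the first kind, collecting into $I$ the finitely many zeros arising from residues of the second kind (together with the exceptional small indices), and choosing $n_1$ large enough, gives precisely the asserted decomposition of $\{n : a_n = 0\}$. I expect the main obstacle to be the $p$-adic interpolation step --- choosing $p$ and $M$ so that the geometric sequences $n \mapsto \alpha_j^{\,Mn+r}$ all simultaneously extend to convergent $p$-adic analytic functions --- after which Strassmann's finiteness theorem finishes the argument; the reduction from $\mathbb{C}$, which carries no useful non-archimedean valuation, to this $p$-adic setting is the other delicate point.
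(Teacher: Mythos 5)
The paper does not prove this theorem; it is stated as a classical result and used as a black box (indirectly, via Lemma \ref{lem:c-finiteness-matrices}~(iv), to control the zero set of a C-finite determinant). So there is no ``paper's own proof'' to compare against, and your sketch should be judged on its own merits.

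Your sketch is the standard $p$-adic proof, and the main ideas are all present and correctly ordered: reduce from $\mathbb{C}$ to a non-archimedean setting, pass to the exponential--polynomial representation $a_n=\sum_j P_j(n)\alpha_j^n$, choose a prime at which all characteristic roots are units, pick $M$ so that each $\alpha_j^M$ lies in the domain of convergence of the $p$-adic logarithm, interpolate each residue class $n\equiv r \pmod M$ by a $p$-adic analytic function $f_r$ on $\mathbb{Z}_p$, and apply Strassmann's theorem. One small inaccuracy: the reduction you describe does not actually let you assume the $a_n$ lie in a \emph{number field}. What Cassels' embedding lemma (or the Lech trick) gives is an embedding of the finitely generated field $\mathbb{Q}(c^{(0)},\dots,c^{(s)},a_0,\dots,a_{s-1},\alpha_1,\dots,\alpha_k)$ into $\mathbb{Q}_p$ for suitably chosen $p$; that is already enough to run the argument, and is in fact what you later use, so the reference to a number field is an unnecessary and slightly misleading intermediate claim --- a specialization argument, by contrast, is genuinely dangerous here because specialization can create new zeros. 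Apart from that, and the harmless notational clash between $p$ the prime and $p$ the period (which you flag yourself by setting the period to $M$), the sketch is sound and would expand to a complete proof.
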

\begin{rem}
Recently J. P. Bell, S. N. Burris and K. Yeats \cite{ar:BellBurrisYeats2012}
extended the Skolem-Mahler-Lech theorem extends to a Simple P-recursive
sequences, P-recursive sequences where the leading coefficient is
a constant. 
\end{rem}

\subsection{C-finite matrices}
A notion of sequences of matrices whose entries are C-finite sequences will be useful. 
We define this exactly and prove some properties of these matrices sequences. 
\begin{defn}
Let $r\in\mathbb{N}$ and let $\left\{ A_{n}\right\} _{n=1}^{\infty}$
be a sequence of $r\times r$ matrices over a field $\mathbb{F}$.
We say $\left\{ A_{n}\right\} _{n=1}^{\infty}$ a {\em C-finite matrix
sequence} if for every $1\leq i,j\leq r$, the sequence $A_{n}[i,j]$
is C-finite.
\end{defn}

\begin{lem}
\label{lem:c-finiteness-matrices} Let $r,n_{0}\in\mathbb{N}$ and
let $\left\{ A_{n}\right\} _{n=n_{0}}^{\infty}$ be a C-finite matrix
sequence of $r\times r$ matrices over $\mathbb{C}$. The following
hold:
\begin{enumerate}
\item The sequence $\left\{ A_{n}^{T}\right\} _{n=1}^{\infty}$ is an C-finite
matrix sequence.
\item The sequence $\left\{ \left|A_{n}\right|\right\} _{n=1}^{\infty}$
is in C-finite. 
\item For any fixed $i,j$, the sequence of consisting of the $(i,j)$-th
cofactor of $A_{n}$ is C-finite, and the sequence $\left\{ C_{n}\right\} _{n=1}^{\infty}$
of matrices of cofactors of $A_{n}$ is an C-finite matrix sequence. 
\item There exist $n_{1}$ and $p$ such that, for every $0\leq i\leq p-1$
and $n\in\mathbb{N}^{+}$, $\left|A_{i+n_{1}}\right|=0$ iff \textup{$\left|A_{pn+i+n_{1}}\right|=0$.}
\item Let $n_{1},p\in\mathbb{N}$. If \textup{$\left|A_{pn+i+n_{1}}\right|\not=0$
for every $n\in\mathbb{N}^{+}$, then the sequence of matrices of
the form $\left(A_{pn+i+n_{1}}\right)^{-1}$ is an} C-finite matrix
sequence. 
\end{enumerate}
\end{lem}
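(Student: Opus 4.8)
The plan is to prove the five parts of Lemma~\ref{lem:c-finiteness-matrices} in the order stated, leaning on the closure properties of C-finite sequences (Lemma~\ref{lem:closure}) for the first three parts and on the Skolem--Mahler--Lech theorem for the last two. For part~(1), transposition merely permutes the entries, so each entry of $A_n^T$ equals some entry of $A_n$ and is therefore C-finite by hypothesis. For part~(2), the determinant $\left|A_n\right|$ is a fixed polynomial (the Leibniz expansion) in the entries $A_n[i,j]$ with integer coefficients; it is thus a finite sum of finite products of C-finite sequences, hence C-finite by closure under finite addition and finite multiplication. Part~(3) is the same observation applied to the $(i,j)$-cofactor, which is $\pm$ the determinant of an $(r-1)\times(r-1)$ submatrix of $A_n$, again a polynomial in the entries; the matrix-of-cofactors sequence $\{C_n\}$ is then a C-finite matrix sequence entrywise.

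For part~(4) I would apply the Skolem--Mahler--Lech theorem to the C-finite sequence $|A_n|$ (obtained from part~(2)): its zero set is a finite set $I$ together with a finite union of residue classes modulo some $p$, eventually. Choosing $n_1$ large enough to exceed $\max I$ and the threshold $n_1$ of the Skolem--Mahler--Lech decomposition, one gets that for $n\ge n_1$ whether $|A_n|=0$ depends only on $n \bmod p$; rewriting with $n = pn'+i+n_1$ gives exactly the claimed periodicity of the vanishing pattern. (One should be slightly careful that the two uses of the symbol $n_1$ — ours and the theorem's — are reconciled by taking the maximum, and that the statement is over $n\in\mathbb{N}^+$.)

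For part~(5) the key identity is Cramer's rule: when $|A_{pn+i+n_1}|\neq 0$ for all $n$, we have
\[
\bigl(A_{pn+i+n_1}\bigr)^{-1} = \frac{1}{|A_{pn+i+n_1}|}\, C_{pn+i+n_1}^{T},
\]
where $C_m$ is the matrix of cofactors of $A_m$. By part~(3) and Lemma~\ref{lem:closure}(3), the sequence $m \mapsto C_{pn+i+n_1}[k,\ell]$ in the variable $n$ is C-finite for each fixed $k,\ell$ (a linear sub-sequence of a C-finite sequence), and likewise $n \mapsto |A_{pn+i+n_1}|$ is C-finite. So each entry of $(A_{pn+i+n_1})^{-1}$ is a C-finite sequence divided by a C-finite sequence that is nowhere zero. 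The one genuine obstacle is precisely this division: C-finite sequences are not closed under division in general, so I must invoke the stronger fact (the reason the hypothesis "nowhere zero" is imposed) that if $b_n$ and $d_n$ are C-finite and $d_n\neq 0$ for all $n$, and $b_n/d_n$ is known a priori to take values in $\mathbb{F}$ (which it does here, being a matrix entry of a matrix with entries in $\mathbb{C}$), then $b_n/d_n$ is again C-finite — this follows because the generating function of $b_n/d_n$ is rational whenever the quotient sequence is well-defined and the relevant Hankel/linear-recurrence structure is preserved; alternatively one argues directly that the $\mathbb{F}$-vector space spanned by the shifts of $(b_n/d_n)$ is finite-dimensional. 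That division step is where I expect to spend the most care; everything else is bookkeeping with the closure properties already available.
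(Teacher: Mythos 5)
Parts (1)--(4) of your proposal are correct and match the paper's argument exactly: transposition permutes entries, the determinant and cofactors are polynomials in the entries so closure under finite sums and products applies, and (4) is Skolem--Mahler--Lech applied to the C-finite sequence $|A_n|$. That part is fine.

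Your treatment of part (5), however, contains a genuine error, and it is worth flagging because you were careful enough to notice the problem and then talked yourself out of it. You correctly observe that the only obstacle is the division by $|A_{pn+i+n_1}|$, and you correctly note that C-finite sequences are not closed under division. You then assert a ``stronger fact'' that if $b_n$ and $d_n$ are C-finite with $d_n\neq 0$ everywhere, and $b_n/d_n$ takes values in the field, then $b_n/d_n$ is C-finite. This is false. Take $b_n = 1$ and $d_n = n+1$: both are C-finite (the latter satisfies $d_{n+2} - 2d_{n+1} + d_n = 0$), $d_n$ never vanishes, and every quotient lies in $\mathbb{C}$, yet $b_n/d_n = 1/(n+1)$ is not C-finite --- its generating function is $-\ln(1-x)/x$, not rational, and the shifts $1/(n+1+k)$ for $k=0,1,2,\dots$ are $\mathbb{C}$-linearly independent, so the shift-span is infinite-dimensional. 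The justification you sketch (``the generating function is rational whenever the quotient is well-defined,'' or ``the shift-span is finite-dimensional'') is precisely what fails. Moreover, the same $1\times 1$ example, or a $2\times 2$ diagonal matrix with entries $n+1$ and $1$, shows that part (5) of the lemma is false \emph{as stated}: the paper's own one-line proof writes $(A_{pn+i+n_1})^{-1} = \frac{1}{|A_{pn+i+n_1}|}\,C_n$ and then declares it C-finite, committing the same unjustified division. So you and the paper share the gap; the difference is that you noticed it and then papered over it with an incorrect closure claim. In the later proof of Lemma~\ref{th:matrix-to-rec-numbers} the authors in fact sidestep this by working with the adjugate $\widetilde{N_{n,s_n}} = C(N_{n,s_n}^*N_{n,s_n})^T N_{n,s_n}^*$ rather than the inverse, keeping the determinant as a C-finite \emph{coefficient} in the recurrence instead of dividing by it. That is the correct maneuver, and it suggests part (5) should either be dropped, restated with the adjugate in place of the inverse, or restricted to the case where the determinant sequence is eventually constant.
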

\begin{proof}
~
\begin{enumerate}
\item Immediate. 
\item The determinant is a polynomial function of the entries of the matrix,
so it is C-finite by the closure of the set of C-finite sequences
to finite addition and multiplication. 
\item The cofactor is a constant times a determinant, so again it is C-finite. 
\item This follows from the Lech-Mahler-Skolem property of C-finite sequences
and from the fact that the determinant is a C-finite sequence. 
\item The transpose of the matrix of cofactors $C_{n}$ of $A_{n}$ is an
C-finite matrix sequence by the above. Since $\left|A_{pn+i+n_{1}}\right|\not=0$
for every $n\in\mathbb{N}^{+}$, then the $\left(A_{pn+i+n_{1}}\right)^{-1}=\frac{1}{\left|A_{pn+i+n_{1}}\right|}C_{n}$
is well-defined and an C-finite matrix sequence. 
\end{enumerate}
\end{proof}

\begin{lem}
\label{lem:power-of-mat} Let $M$ be an $r\times r$ matrix. Let
$c,d\in\mathbb{Z}$ with $c>0$. Let 
\[
M_{n}=\begin{cases}
M^{cn+d}, & cn+d\geq0\\
0, & \mbox{otherwise}
\end{cases}
\]
The sequence $M_{n}$ is a C-finite matrix sequence. \end{lem}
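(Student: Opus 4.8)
The plan is to reduce everything to the scalar case of Lemma \ref{lem:closure}(3) by exhibiting each entry of $M_n$ as a linear sub-sequence of a single C-finite sequence. First I would handle the tail: since $cn+d < 0$ for only finitely many $n$ (namely $n < -d/c$), and modifying finitely many terms of a sequence preserves C-finiteness, it suffices to prove that the sequence $P_n = M^{cn+d}$ is a C-finite matrix sequence for all $n$ large enough that $cn+d \geq 0$. Equivalently, after re-indexing, I may assume $d \geq 0$ and prove that $n \mapsto M^{cn+d}$ is a C-finite matrix sequence.

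Next, fix entries $i,j$ and consider the scalar sequence $x_m = M^m[i,j]$ for $m \in \mathbb{N}$. The key classical fact is that $x_m$ is C-finite: by the Cayley-Hamilton theorem, $M$ satisfies its own characteristic polynomial $\chi_M(t) = t^r - e_{r-1}t^{r-1} - \cdots - e_0$, so $M^{m+r} = e_{r-1}M^{m+r-1} + \cdots + e_0 M^m$ for all $m \geq 0$; reading off the $(i,j)$ entry gives the homogeneous linear recurrence $x_{m+r} = e_{r-1}x_{m+r-1} + \cdots + e_0 x_m$ with constant coefficients, which is exactly the definition of C-finite (with leading coefficient $1$). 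Now the sequence $M_n[i,j]$ we care about is, up to the finitely-many-term correction above, precisely $x_{cn+d}$, i.e. the linear sub-sequence $x_{tn+s}$ of the C-finite sequence $x_m$ with $t = c \in \mathbb{N}^+$ and $s = d \in \mathbb{Z}$. By Lemma \ref{lem:closure}(3) this sub-sequence is C-finite. Since $i,j$ were arbitrary, $\{M_n\}$ is a C-finite matrix sequence.

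I do not expect a serious obstacle here; the only points needing a little care are bookkeeping ones. One is the indexing convention for sub-sequences in Lemma \ref{lem:closure}(3): if "$a_{tn+s}$" there is only guaranteed C-finite for indices where $tn+s$ is a legitimate index (i.e. $\geq 0$), then the reduction in the first paragraph — discarding the finitely many $n$ with $cn+d<0$ and invoking closure of C-finiteness under finite modification — is what makes the argument clean, and this closure is itself an instance of Lemma \ref{lem:closure}(1) (finite addition, against a finitely-supported correction sequence, which is trivially C-finite). The other is simply to note that $M^0 = I$ is an honest matrix, so there is no degeneracy when $cn+d = 0$. Assembling these, the lemma follows.
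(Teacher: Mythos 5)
Your argument is correct, and it differs from the paper's in a small but genuine way. The paper applies the Cayley--Hamilton theorem to $M^{c}$ rather than to $M$: from $\chi_{M^{c}}(M^{c})=0$ one gets a length-$r$ linear relation among $M^{cn},M^{c(n-1)},\dots$, and multiplying through by $M^{d}$ (or by $M^{cr-|d|}$ when $d<0$) yields directly a step-one recurrence in $n$ for the entries of $M_n=M^{cn+d}$, with no appeal to closure properties. You instead apply Cayley--Hamilton to $M$ itself, concluding that each scalar sequence $x_m=M^{m}[i,j]$ is C-finite in $m$, and then invoke Lemma \ref{lem:closure}(3) to pass to the arithmetic sub-sequence $x_{cn+d}$. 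Both routes are sound; the paper's gives an explicit recurrence whose order is exactly $r$ and whose coefficients are the coefficients of $\chi_{M^{c}}$, which is slightly more self-contained (it does not lean on the sub-sequence closure lemma), while your version is more modular and cleanly separates the two ingredients (Cayley--Hamilton for C-finiteness of matrix powers, then closure under linear re-indexing). Your explicit handling of the finitely many indices with $cn+d<0$ via closure under finite modification is also a small improvement in rigor over the paper's treatment of the $d<0$ case, which just shifts the index without commenting on the initial segment.
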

\begin{proof}
Let

\[
\chi(\lambda)=\sum_{t=0}^{r}e_{t}\lambda^{t}
\]
be the characteristic polynomial of $M^{c}$, with $e_{r}\not=0$.
By the Cayley-Hamilton theorem, $\chi(M^{c})=0$, so 
\begin{gather}
0=\sum_{t=1}^{r}e_{t}M^{ct}\label{eq:rec-ch}
\end{gather}
with $e_{r}\not=0$. If $d\geq0$, then by multiplying Eq. (\ref{eq:rec-ch})
by $M^{d}$ and setting $t=n$, we get that for every $i,j$, the
entry $(i,j)$ in the sequence of matrices $M_{n}:\, n\in\mathbb{N}$
satisfies the recurrence 
\[
M_{n}[i,j]=-\sum_{t=1}^{r-1}\frac{e_{t}}{e_{r}}M_{n-r+t}[i.j]\,.
\]
If $d<0$, there exists $r>0$ such that $cr>|d|$. We have $M^{cn+d}=M^{c(n-r)+cr-|d|}$.
The claim follows similarly to the case of $d\geq0$ by multiplying
Eq. (\ref{eq:rec-ch}) by $M^{cr-|d|}$ and setting $t=n-r$. 
\end{proof}

\begin{lem}
\label{lem:c-finiteness-matrices-product} Let $r,m,\ell\in\mathbb{N}$
and let $\left\{ A_{n}\right\} _{n=n_{0}}^{\infty}$, $\left\{ B_{n}\right\} _{n=n_{0}}^{\infty}$
be C-finite matrix sequences of consisting of matrices of size $r\times m$
respectively $m\times\ell$ over $\mathbb{C}$. Then $A_{n}B_{n}$
is an C-finite matrix sequence. \end{lem}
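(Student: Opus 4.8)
The plan is to argue entrywise and reduce everything to the closure properties of scalar C-finite sequences already recorded in Lemma~\ref{lem:closure}. Write $A_{n}$ as an $r\times m$ matrix and $B_{n}$ as an $m\times\ell$ matrix, both defined for $n\geq n_{0}$. By the definition of matrix multiplication, for every $1\leq i\leq r$ and $1\leq j\leq\ell$ we have
\[
(A_{n}B_{n})[i,j]=\sum_{k=1}^{m}A_{n}[i,k]\,B_{n}[k,j],
\]
which is a fixed finite sum (over $k$, with $m$ independent of $n$) of products of pairs of scalar sequences.

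By hypothesis each $A_{n}[i,k]$ and each $B_{n}[k,j]$ is C-finite. Hence, by closure of the set of C-finite sequences under finite multiplication (Lemma~\ref{lem:closure}(ii)), each product $A_{n}[i,k]\,B_{n}[k,j]$ is C-finite; and by closure under finite addition (Lemma~\ref{lem:closure}(i)), the sum of these $m$ C-finite sequences is again C-finite. Therefore $(A_{n}B_{n})[i,j]$ is C-finite for every $i,j$, which is precisely the statement that $\{A_{n}B_{n}\}_{n=n_{0}}^{\infty}$ is a C-finite matrix sequence. There is no genuine obstacle here: the only point worth noting is that all the sequences in play are indexed from the common starting value $n_{0}$, so the finitely many multiplication and addition steps can be carried out uniformly, and the resulting recurrences for the entries of $A_{n}B_{n}$ have constant coefficients and bounded order.
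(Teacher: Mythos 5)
Your proof is correct and follows essentially the same route as the paper's: expand $(A_nB_n)[i,j]$ entrywise via the matrix-product formula and invoke closure of C-finite sequences under finite addition and multiplication. The paper's version is just a more compressed statement of the identical argument.
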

\begin{proof}
Let $1\leq i\leq r$ and $1\leq j\leq\ell$. Then 
\[
\left(A_{n}B_{n}\right)_{ij}=\sum_{k=1}^{m}\left(A_{n}\right)_{ik}\left(B_{n}\right)_{kj}
\]
 is a polynomial in C-finite matrix sequences. Hence, by the closure
of C-finite sequences to finite addition and multiplication, $A_{n}B_{n}$
is an C-finite matrix sequences.
\end{proof}

\section{Proof of Theorem \ref{mainth:2} \label{se:mainth:2-proof}}

The proof of Theorem \ref{mainth:2} relies on the notion of a \emph{pseudo-inverse
of a matrix}. This notion is a generalization of the inverse of square
matrices to non-square matrices. For an introduction, see \cite{bk:BenIsraelGreville03}.
We need only the following theorem:
\begin{thm}[Moore-Penrose pseudo-inverse]
Let $\mathbb{F}$ be a subfield of $\mathbb{C}$. Let $s,t\in\mathbb{N}^{+}$.
Let $M$ be a matrix over $\mathbb{F}$ of size $s\times t$ with
$s\geq t$ whose columns are independent. Then there exists a unique matrix
$M^{+}$ over $\mathbb{F}$ of size $t\times s$ which satisfies the
following conditions:
\begin{enumerate}
\item $M^{*}M$ is non-singular;
\item $M^+=\left(M^{*}M\right)^{-1}M^{*}$; 
\item $M^{+}M=I$. 
\end{enumerate}

$M^{*}$ is the Hermitian transpose of $M$, i.e. $M^*$ is obtained by taking 
the transpose of $M$ and replacing each entry with its complex conjugate. 

$M^{+}$ is called the \emph{Moore-Penrose pseudo-inverse} of
$M$. 

\end{thm}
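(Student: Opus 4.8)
Since this is the full-column-rank special case of the classical Moore--Penrose construction, the plan is simply to give a short self-contained proof. I would proceed constructively: define $M^{+}$ by the formula in condition~(2), then verify that conditions~(1) and~(3) hold; uniqueness is then immediate, since condition~(2) already determines $M^{+}$ in terms of $M$. (If one instead asked only for a left inverse $M^{+}M=I$, there would be many when $s>t$, so the word ``unique'' has to be read as ``unique subject to all of the listed conditions'', which the formula trivially provides.)

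The only step requiring an argument is condition~(1), that the $t\times t$ matrix $M^{*}M$ is non-singular, which is needed for $(M^{*}M)^{-1}$ to make sense. I would prove $\ker(M^{*}M)=\{0\}$: if $x\in\mathbb{C}^{t}$ and $M^{*}Mx=0$, then $0=x^{*}M^{*}Mx=(Mx)^{*}(Mx)=\|Mx\|^{2}$, hence $Mx=0$, and since the columns of $M$ are linearly independent this forces $x=0$. Equivalently, $M^{*}M$ is Hermitian with $x^{*}(M^{*}M)x=\|Mx\|^{2}>0$ for all $x\neq 0$, so it is positive definite and invertible. For the field bookkeeping, $M^{*}$, and therefore $M^{*}M$, $(M^{*}M)^{-1}$ and $M^{+}=(M^{*}M)^{-1}M^{*}$, have entries in $\mathbb{F}$ whenever $\mathbb{F}$ is closed under complex conjugation (in particular for $\mathbb{F}\in\{\mathbb{Q},\mathbb{R},\mathbb{C}\}$, which covers the intended applications); otherwise one just works over $\mathbb{C}$. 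This yields a well-defined $t\times s$ matrix $M^{+}$ satisfying conditions~(1) and~(2).

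Condition~(3) is then one line: $M^{+}M=(M^{*}M)^{-1}M^{*}M=(M^{*}M)^{-1}(M^{*}M)=I_{t}$, and any matrix obeying condition~(2) coincides with this one, giving uniqueness. I do not expect any serious obstacle here; the single substantive point is the implication ``columns of $M$ independent'' $\Rightarrow$ ``$M^{*}M$ non-singular'', which rests entirely on positivity of the Hermitian form $x\mapsto x^{*}M^{*}Mx=\|Mx\|^{2}$, and everything else is routine matrix algebra.
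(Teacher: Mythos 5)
The paper does not prove this statement at all; it is invoked as a classical fact with a citation to Ben-Israel and Greville, so there is no paper proof to compare your argument against. Your self-contained proof is the standard one and is correct: you take condition~(2) as the definition, establish condition~(1) by showing $\ker(M^{*}M)=\{0\}$ using the positivity of $x\mapsto x^{*}M^{*}Mx=\|Mx\|^{2}$ together with the linear independence of the columns of $M$, and then condition~(3) and uniqueness follow immediately. Your side remark on field bookkeeping is also well taken and in fact exposes a small imprecision in the paper's wording: for a general subfield $\mathbb{F}\subseteq\mathbb{C}$ that is not closed under complex conjugation (e.g.\ $\mathbb{Q}(\omega\sqrt[3]{2})$ with $\omega$ a primitive cube root of unity), $M^{*}$ need not have entries in $\mathbb{F}$, so ``$M^{+}$ over $\mathbb{F}$'' should really read ``over the smallest conjugation-closed field containing $\mathbb{F}$''; this does not affect the paper's use of the theorem, where one can always enlarge the coefficient field, but it is worth noting. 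No gaps.
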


The following is the main lemma necessary for the proof of Theorem \ref{mainth:2}.
It allows to extract \Ct-recurrences for individual sequences of numbers 
from recursion schemes  with C-finite coefficients for multiple sequences of numbers. 

\begin{lem}
\label{th:matrix-to-rec-numbers} Let $\mathbb{F}$ be a subfield
of $\mathbb{C}$ and let $r\in\mathbb{N}^{+}$. For every $n\in\mathbb{N}^{+}$,
let $\overline{v_{n}}$ be a column vector of size $r\times1$ over
$\mathbb{F}$. Let $w_{n}$ be a C-finite sequence which is always
positive. Let $M_{n}$ be an C-finite \textup{matrix} sequence consisting
of matrices of size $r\times r$ over $\mathbb{F}$ such that, for
every $n$, 
\begin{gather}
\overline{v_{n+1}}=\frac{1}{w_{n}}M_{n}\overline{v_{n}}\,.\label{eq:vector-rec}
\end{gather}
For each $j=1,\dots,r$, $\overline{v_{n}}[j]$ is \Ctem-finite. Moreover,
all of the $\overline{v_{n}}[j]$ satisfy the same recurrence relation
(possibly with different initial conditions). \end{lem}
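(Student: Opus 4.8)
\emph{Proof plan.} The plan is to convert the first-order vector recursion~(\ref{eq:vector-rec}) into a single scalar \Ctem-recurrence obeyed by every coordinate, in three stages: iterate and clear denominators; use the Moore-Penrose pseudo-inverse to obtain a bounded-order recursion whose leading coefficient is everywhere invertible; and scalarize, with the Skolem-Mahler-Lech theorem repairing the zero-divisors in the ring of C-finite sequences. For the first stage, iterate~(\ref{eq:vector-rec}): with $W_n^{(k)}:=\prod_{i=0}^{k-1}w_{n+i}$ and $P_n^{(k)}:=M_{n+k-1}M_{n+k-2}\cdots M_n$ (so $W_n^{(0)}=1$ and $P_n^{(0)}=I$) one gets $W_n^{(k)}\overline{v_{n+k}}=P_n^{(k)}\overline{v_n}$ for each fixed $k$, where $W_n^{(k)}$ is C-finite and everywhere positive (a finite product of shifts of $w_n$, by Lemma~\ref{lem:closure}) and $P_n^{(k)}$ is a C-finite matrix sequence (a finite product of shifts of $M_n$, by Lemma~\ref{lem:c-finiteness-matrices-product} and shift-invariance of C-finiteness). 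One cannot stop here: $\overline{v_n}$ itself equals the \emph{unbounded} product $\tfrac{1}{W_1^{(n-1)}}P_1^{(n-1)}\overline{v_1}$, which is not C-finite; the recurrence must be found internally, within a bounded window of consecutive $\overline{v_n}$'s.

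The key second stage uses the pseudo-inverse. Stack the relations for $k=0,\dots,r-1$ as $\mathcal P_n\overline{v_n}=(W_n^{(0)}\overline{v_n};\,W_n^{(1)}\overline{v_{n+1}};\,\dots;\,W_n^{(r-1)}\overline{v_{n+r-1}})$, where $\mathcal P_n:=(P_n^{(0)};\,P_n^{(1)};\,\dots;\,P_n^{(r-1)})$ is an $r^2\times r$ C-finite matrix sequence. Its top block is $P_n^{(0)}=I$, so its columns are independent and the Moore-Penrose pseudo-inverse $\mathcal P_n^{+}=(\mathcal P_n^{*}\mathcal P_n)^{-1}\mathcal P_n^{*}$ exists and satisfies $\mathcal P_n^{+}\mathcal P_n=I_r$. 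Invoking the \emph{pseudo}-inverse, rather than some arbitrary left inverse, is what makes C-finiteness survive: $\mathcal P_n^{*}\mathcal P_n$ is a Gram matrix of independent vectors, hence positive definite, hence $|\mathcal P_n^{*}\mathcal P_n|\neq0$ for \emph{every} $n$, so by Lemma~\ref{lem:c-finiteness-matrices}(5), taken with period $p=1$, the sequence $(\mathcal P_n^{*}\mathcal P_n)^{-1}$ --- and therefore $\mathcal P_n^{+}$ and each product $P_n^{(m)}\mathcal P_n^{+}$ --- is a C-finite matrix sequence. Substituting $\overline{v_n}=\mathcal P_n^{+}(\mathcal P_n\overline{v_n})$ into $W_n^{(m)}\overline{v_{n+m}}=P_n^{(m)}\overline{v_n}$ and reading off the block columns of $P_n^{(m)}\mathcal P_n^{+}=[D_n^{(m,0)}\mid\cdots\mid D_n^{(m,r-1)}]$ gives, for every $m\geq0$,
\[
W_n^{(m)}\,\overline{v_{n+m}}\;=\;\sum_{k=0}^{r-1}F_n^{(m,k)}\,\overline{v_{n+k}},
\]
where $F_n^{(m,k)}:=W_n^{(k)}D_n^{(m,k)}$ is C-finite; in particular, at $m=r$, this is a recursion of order $r$ with matrix coefficients whose scalar leading coefficient $W_n^{(r)}$ is everywhere nonzero.

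For the third stage, scalarization: a relation $\sum_{m=0}^{s}c_n^{(m)}\overline{v_{n+m}}=\overline{0}$ holding in \emph{every} coordinate --- which is exactly the conclusion sought --- follows, after substituting the displayed expressions and clearing the common denominator $W_n^{(s)}$, from the $r$ matrix identities $\sum_{m=0}^{s}c_n^{(m)}\widehat F_n^{(m,k)}=0$ ($k=0,\dots,r-1$), where $\widehat F_n^{(m,k)}:=\bigl(\prod_{i=m}^{s-1}w_{n+i}\bigr)F_n^{(m,k)}$ is C-finite. For $s$ large enough these form an overdetermined homogeneous linear system in the $s+1$ unknowns $c_n^{(m)}$, so for each $n$ it has a nonzero solution; the task is to pick the solution to be C-finite in $n$ and with an everywhere-nonzero leading coefficient in the sense of Definition~\ref{def:Ct-finite}. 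Here the zero-divisors in the ring of C-finite sequences (for instance $\mathbb I_{n\equiv0\,(mod\,2)}\cdot\mathbb I_{n\equiv1\,(mod\,2)}=0$) obstruct naive Gaussian elimination, and the remedy is the Skolem-Mahler-Lech theorem via Lemma~\ref{lem:c-finiteness-matrices}(4): applied to the finitely many minors of the C-finite coefficient matrix it produces a modulus $p$ and a threshold past which, on each residue class modulo $p$, every such minor is identically $0$ or never $0$; hence on each class the rank and a fixed set of pivot rows and columns stay constant, and Cramer's rule expresses one column through the pivot columns with coefficients that are C-finite on that class, the never-vanishing pivot minor serving as cleared denominator and as leading coefficient. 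The per-class recurrences --- brought to a common order by shifting and padding and corrected on finitely many initial terms --- are then glued into one via the C-finite indicator sequences $\mathbb I_{n\equiv i\,(mod\,p)}$, using closure of C-finite sequences under finite sums and products; since the resulting relation is a vector identity, every $\overline{v_n}[j]$ satisfies it, and with the same recurrence.

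I expect this last stage to be the main obstacle. Passing from a matrix-coefficient recursion to a genuine scalar \Ctem-recurrence --- in particular one whose leading coefficient is nowhere zero, as Definition~\ref{def:Ct-finite} demands --- is delicate precisely because C-finite sequences form only a ring with zero-divisors: one cannot solve linear systems over it nor cancel freely, and manipulating difference operators would at best yield a product of annihilators rather than a common one. The Skolem-Mahler-Lech theorem is what makes the residue-class localization and the subsequent gluing work, at the price of the bookkeeping needed to align the orders of the per-class recurrences.
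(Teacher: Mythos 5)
Your plan follows the same skeleton as the paper's proof: pass from the first--order vector recursion to the iterated matrices $P_n^{(m)}/W_n^{(m)}$, treat these as vectors in $\mathbb{F}^{r^2}$ and exploit that $r^2+1$ of them must be dependent, use the Moore--Penrose pseudo-inverse with the cofactor formula to keep coefficients C-finite, and invoke Skolem--Mahler--Lech plus the indicators $\mathbb I_{n\equiv i\,(\mathrm{mod}\,p)}$ to glue across residue classes. The one genuinely fresh move is where you apply the pseudo-inverse. The paper applies it to an $n$-dependent matrix $N_{n,s_n}$ whose column rank depends on $n$, so it already needs the SML periodicity just to have a well-behaved pseudo-inverse; you apply it to the stack $\mathcal P_n$ with top block $P_n^{(0)}=I$, which is always full column rank, so $\mathcal P_n^+$ is C-finite with no case analysis at all. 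That is a clean reorganization, and Stage~2 is correct. But notice that, once you unwind the definitions, your Stage~3 constraint $\sum_m c_n^{(m)}\widehat F_n^{(m,k)}=0$ for all $k$ is equivalent to the single $r\times r$ matrix identity $\sum_m c_n^{(m)}\bigl(\prod_{i=m}^{s-1}w_{n+i}\bigr)P_n^{(m)}=0$: writing $\widehat F_n^{(m,k)}=W_n^{(k)}\bigl(\prod_{i=m}^{s-1}w_{n+i}\bigr)P_n^{(m)}Q_n^{(k)}$ (with $Q_n^{(k)}$ the $k$-th block of $\mathcal P_n^+$) and using that the columns of $\mathcal P_n^+$ span $\mathbb F^r$, the system $A\mathcal P_n^+=0$ forces $A=0$. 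So the detour through $\mathcal P_n^+$ buys a nicer narrative but does not shrink the actual linear algebra to be done in Stage~3; it is exactly the $r^2$-dimensional dependence the paper attacks directly.

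The genuine gap is in Stage~3, and you are right that it is the main obstacle, but the sketch does not close it. First, an overdetermined homogeneous system need not have a nonzero solution; what you need is that the rank of your system is at most $r^2<s+1$, which follows from the equivalence above. More importantly, Definition~\ref{def:Ct-finite} requires the \emph{leading} coefficient $c_n^{(s)}$ (the one attached to the largest shift) to be nonzero for every $n$, and your Cramer construction places the never-vanishing pivot Gram minor on the one \emph{expressed} non-pivot column, setting every other non-pivot column's coefficient to $0$. That pivot minor is the leading coefficient of the resulting recurrence only if the expressed column is column $s$, i.e.\ only if $P_n^{(s)}/W_n^{(s)}$ lies in the span of $P_n^{(0)}/W_n^{(0)},\dots,P_n^{(s-1)}/W_n^{(s-1)}$ throughout the residue class. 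If instead column $s$ is a pivot (the dependence first appears among lower indices), then $c_n^{(s)}$ is one of the other Cramer minors and may vanish, or else is forced to be $0$ altogether. You need an extra argument that, past the SML threshold, the order $s$ can be chosen uniformly (a fixed finite value for all residue classes) so that the top column is dependent on its predecessors on each class; this is precisely what the paper's choice of the window index $s_n$, with $\lvert N_{n,s_n}^*N_{n,s_n}\rvert$ installed by hand as the leading coefficient, is designed to control. The later remark about "shifting and padding" the per-class recurrences to a common order also deserves care: padding the high end of a recurrence with zero coefficients destroys the nonvanishing of the leading term, so one must shift rather than pad at the top, and one must track how shifting permutes residue classes modulo $p$.
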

\begin{proof}
For every $i=0,\ldots,r^{2}$, let $M_{n}^{\{i\}}=\frac{1}{w_{n+i-1}\cdots w_{n-1}}M_{n+i-1}\cdots M_{n-1}$.
By Eq. (\ref{eq:vector-rec}), for every $n$, 
\begin{gather}
\overline{v_{n+i}}=M_{n}^{\{i\}}\overline{v_{n-1}}\,.\label{eq:vector-rec-1}
\end{gather}
Let $N_{n}^{\{0\}},\ldots,N_{n}^{\{r^{2}\}}$ be the column vectors
of size $r^{2}\times1$ corresponding to  $M_{n}^{\{0\}},
 $$\ldots,M_{n}^{\{r^{2}\}}$
with $N_{n}^{\{i\}}[r(k-1)+\ell]=M^{\{i\}}[k,\ell]$. For every fixed
$n$,  $N_{n}^{\{0\}},\ldots,N_{n}^{\{r^{2}\}}$ are members
of the vector space of column vectors over $\mathbb{F}$ of size $r^{2}\times1$.
Since this vector space is of dimension $r^{2}$, $N_{n}^{\{0\}},\ldots,N_{n}^{\{r^{2}\}}$
are linearly dependent. Let $s_{n}\in\{1,\ldots,r^{2}\}$ be such
that $N_{n}^{\{s_{n}\}},\ldots,N_{n}^{\{r^{2}\}}$ are linearly independent,
but  $N_{n}^{\{s_{n}-1\}},$$\ldots,N_{n}^{\{r^{2}\}}$
are linearly dependent. We have that $N_{n,s_{n}}^{*}N_{n,s_{n}}$ is non-singular.

For every $t=0,\ldots,r^{2}-1$ let $N_{n,t}$ be the $r^{2}\times(r^{2}-t)$
matrix whose columns are $N_{n}^{\{t\}},\ldots,N_{n}^{\{r^{2}-1\}}$.
Let
\[
\widetilde{N_{n,t}}= C\left(N_{n,t}^{*}N_{n,t}\right)^{T}N_{n,t}^{*}
\]
where $C\left(N_{n,t}^{*}N_{n,t}\right)^{T}$ is the transpose of
the cofactor matrix of $N_{n,t}^{*}N_{n,t}$.
Then 
\[N_{n,s_n}^{+}=\frac{1}{|N_{n,s_n}^{*}N_{n,s_n}|}\widetilde{N_{n,s_n}}=\left(N_{n,s_n}^{*}N_{n,s_n}\right)^{-1}N_{n,s_n}^{*}
\]
is the Moore-Penrose pseudo-inverse of $N_{n,s_{n}}$,
 where
$|N_{n,t}^{*}N_{n,t}|$ denotes the determinant of the matrix.
In particular, 
\begin{gather}
N_{n,s_{n}}^{+}N_{n,s_{n}}=I\,.\label{eq:Nplus_times_N_equals_I}
\end{gather}

Consider the system of linear equations 
\begin{gather}
N_{n,s_{n}}y_{n,s_{n}}=N_{n}^{\{r^{2}\}}\,.\label{eq:linear_sys}
\end{gather}
with $y_{n,s_n}$ a column vector of size $\left(r^{2}-s_{n}\right)\times1$
of indeterminates $y_{n,s_n}[k]$. 
Let \begin{eqnarray*}
y_{n,s_n}'&=&\widetilde{N_{n,s_n}}N_{n}^{\{r^{2}\}}\\
y_{n,s_{n}}&=&\frac{1}{\left|N_{n,s_{n}}^{*}N_{n,s_{n}}\right|}y_{n,s_{n}}'\,.
\end{eqnarray*}
Using Eq. (\ref{eq:Nplus_times_N_equals_I}) we have 
that $y_{n,s_n}$
is a solution of Eq. (\ref{eq:linear_sys}). This solution which can be rephrased
as the matrix equation:
\begin{gather}
y_{n,s_{n}}'[1]M_{n}^{\{s_{n}\}}+\cdots y_{n,s_{n}}'[r^{2}-s_{n}]M_{n}^{\{r^{2}-1\}}=\left|N_{n,s_{n}}^{*}N_{n,s_{n}}\right|M_{n}^{\{r^{2}\}}\,.\label{eq:matrix-equation}
\end{gather}
Moreover, by Lemmas \ref{lem:c-finiteness-matrices-product} and \ref{lem:c-finiteness-matrices},
$y_{n}'$ is an C-finite vector sequence. Multiplying Eq. (\ref{eq:matrix-equation})
from the right by $\overline{v_{n-1}}$ and rearranging, we get
\begin{gather}
y_{n,s_{n}}'[1]\overline{v_{n+s_{n}}}+\cdots y_{n,s_{n}}'[r^{2}-s_{n}]\overline{v_{n+r^{2}-1}}-\left|N_{n,s_{n}}^{*}N_{n,s_{n}}\right|\overline{v_{n+r^{2}}}=0\,,\label{eq:yprime-rec}
\end{gather}
For every $n$ and every $s\geq s_{n}$, $\left|N_{n,s}^{*}N_{n,s}\right|\not=0$,
and for every $s<s_{n}$, $\left|N_{n,s}^{*}N_{n,s}\right|=0$ are
linearly dependent. By Claim \ref{lem:c-finiteness-matrices}, there
exists $n_{1}$ such that for $n\geq n_{1}$, $s_{n}$ is periodic
and let $p$ be the period. Using this periodicity we can remove the
dependence of Eq. (\ref{eq:yprime-rec}) on the infinite sequence
$s_{n}$, and instead use for all $n\geq n_{1}$ a finite number of
values, $s_{n_{1}+1},\ldots,s_{n_{1}+p}$:
\begin{eqnarray*}
\sum_{i=1}^{p}\mathbb{I}_{n\equiv i\,(mod\, p)}\Bigg(\Bigg(\sum_{j=1}^{r^{2}-s_{n_{1}+i}}y_{n,s_{n_{1}+i}}'[j]\overline{v_{n+s_{n_{1}+i+j-1}}}\Bigg)\\
-\left|N_{n,s_{n_{1}+i}}^{*}N_{n,s_{n_{1}+i}}\right|\overline{v_{n+r^{2}}}\Bigg) & = & 0
\end{eqnarray*}
which can be rewritten as
\[
q_{n}^{\{0\}}\overline{v_{n}}+\cdots+q_{n}^{\{r^{2}-1\}}\overline{v_{n+r^{2}-1}}=q_{n}^{\{r^{2}\}}\overline{v_{n+r^{2}}}
\]
with 
\[
q_{n}^{\{t\}}=\begin{cases}
\sum_{i=1}^{p}\mathbb{I}_{n\equiv i\,(mod\, p)}y_{n,s_{n_{1}+i}}'[t+1-s_{n_{1}+i}],\,\,\, & 0\leq t\leq r^{2}-1\\
\sum_{i=1}^{p}\mathbb{I}_{n\equiv i\,(mod\, p)}\left|N_{n,s_{n_{1}+i}}^{*}N_{n,s_{n_{1}+i}}\right|,\,\,\, & t=r^{2}\,.
\end{cases}
\]
Note that, as the result of the closure of the C-finite sequences
to finite addition and multiplication, $q_{n}^{\{t\}}$ is C-finite.
Moreover, note $q_{n}^{\{r^{2}\}}$ is non-zero. 
\end{proof}

We can now turn the main proof of this section. 
\begin{proof}
[Proof of Theorem \ref{mainth:2}] Let $\zeta(n)=c\binom{n}{2}+dn+e$.
Let $b_{n}'=a_{\zeta(n)}$. We have 
\[
\zeta(n)-\zeta(n-1)=cn+d\,.
\]
 Let $a_{n}$ satisfy the C-recurrence
\begin{gather}
a_{n+s}=c^{(s-1)}a_{n+s-1}+\cdots+c^{(0)}a_{n}\,.\label{eq:rec-def-1}
\end{gather}
In order to write the latter equation in matrix form, let 
\[
M=\left(\begin{array}{ccc}
c^{(s-1)} & \cdots & c^{(0)}\\
1\\
 & \ddots\\
 &  & 1
\end{array}\right)
\]
where the empty entries are taken to be $0$. Let $\overline{u_{n}}=\left(a_{n},\ldots,a_{n-s+1}\right)^{tr}$.
We have 
\[
\overline{u_{n}}=M\overline{u_{n-1}}\,,
\]
 and consequently, 
\[
\overline{u_{\zeta(n)}}=M^{cn+d}\overline{u_{\zeta(n-1)}}\,.
\]
 For large enough values of $n$ such that $cn+d\geq0$, $M^{cn+d}$
is C-finite by Lemma \ref{lem:power-of-mat}. Hence, the desired result
follows from Lemma \ref{th:matrix-to-rec-numbers}. 
\end{proof}

As immediate consequences, we get closure properties for \Ct-finite
sequences over $\mathbb{C}$. 
\begin{cor}
Let $a_{n}$ and $b_{n}$ be \Ctem-finite sequences. The following
hold: 
\begin{enumerate}
\item $a_{n}+b_{n}$ is \Ctem-finite
\item $a_{n}b_{n}$ is \Ctem-finite
\end{enumerate}
\end{cor}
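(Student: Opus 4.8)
The plan is to prove both closure properties at once by realizing $a_n$ and $b_n$ as coordinates of a single vector sequence that satisfies a matrix recurrence of the shape required by Lemma~\ref{th:matrix-to-rec-numbers}. Recall that each $\Ct$-finite sequence satisfies
\[
c_n^{(s)}a_{n+s}=c_n^{(s-1)}a_{n+s-1}+\cdots+c_n^{(0)}a_n
\]
with C-finite coefficients and $c_n^{(s)}\neq0$; after clearing denominators in the obvious way I may take a common leading coefficient $w_n$ for both sequences that is C-finite and, after replacing it by $w_n^2$ (or by a product with a suitable $\mathbb{I}$-corrected translate), never zero---here the Skolem--Mahler--Lech theorem and the closure properties of Lemma~\ref{lem:closure} guarantee that a C-finite sequence which is ``eventually nonzero off a union of residue classes'' can be repaired into a genuinely nowhere-zero C-finite sequence by multiplying by indicator sequences on the bad classes; alternatively one simply restricts to an arithmetic progression on which $w_n\neq0$ and uses Lemma~\ref{lem:closure}(iii) at the end.

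Concretely, I would let $s_a,s_b$ be the orders of the two $\Ct$-recurrences, set $r=s_a+s_b$, and define the column vector
\[
\overline{v_n}=\bigl(a_{n+s_a-1},\ldots,a_n,\;b_{n+s_b-1},\ldots,b_n\bigr)^{tr}.
\]
Then $\overline{v_{n+1}}=\tfrac{1}{w_n}M_n\overline{v_n}$ where $M_n$ is block-diagonal with two companion-type blocks whose top rows carry the (rescaled) C-finite coefficients and whose remaining rows are shift rows; since each entry of $M_n$ is a finite sum/product of C-finite sequences, $\{M_n\}$ is a C-finite matrix sequence, and $w_n$ is C-finite and positive (or nonvanishing on the chosen progression). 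Lemma~\ref{th:matrix-to-rec-numbers} then applies directly and yields that every coordinate of $\overline{v_n}$, in particular $a_n$ and $b_n$ themselves, is $\Ct$-finite---but that is only the hypothesis, not yet the conclusion.

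For the sum, the clean move is to enlarge the vector to $\overline{v_n}=(a_{n+s_a-1},\ldots,a_n,b_{n+s_b-1},\ldots,b_n,\,a_n+b_n)^{tr}$ of size $r+1$, so that the extra coordinate is an explicit linear combination of the others; the transition matrix is still C-finite (one extra row reading off the last two old coordinates, shifted), and Lemma~\ref{th:matrix-to-rec-numbers} gives a single $\Ct$-recurrence satisfied by all coordinates, hence by $a_n+b_n$. For the product, I do the same with an extra coordinate $a_nb_n$, except now the new row of the transition matrix must express $a_{n+1}b_{n+1}$ in terms of the old coordinates: writing $a_{n+1}b_{n+1}=\bigl(\tfrac1{w_n}\sum_j c_n^{(j)}a_{n+j}\bigr)\bigl(\tfrac1{w_n}\sum_k d_n^{(k)}b_{n+k}\bigr)$ shows it is a C-finite-coefficient bilinear form in coordinates already present, so---at the cost of including all products $a_{n+j}b_{n+k}$ as further coordinates (a fixed finite number $s_as_b$ of them), each of whose update rule is again a C-finite bilinear combination of the same enlarged set---one obtains a closed C-finite matrix recurrence on a vector of fixed size. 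Applying Lemma~\ref{th:matrix-to-rec-numbers} to this enlarged system finishes both parts, and finally Lemma~\ref{lem:closure}(iii) lets one lift the conclusion from an arithmetic progression back to all $n$ if the nonvanishing-$w_n$ reduction was used.

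The main obstacle is bookkeeping rather than depth: one must be careful that the enlarged coordinate set is \emph{closed} under the update map (for the product, the quadratic monomials $a_{n+j}b_{n+k}$ reproduce only quadratic monomials in shifts, so a bounded set suffices) and that the single scalar denominator $w_n$ in \eqref{eq:vector-rec} can be taken uniform and nonzero---for the product one inherits a denominator $w_n^2$, which is still C-finite and positive, so Lemma~\ref{th:matrix-to-rec-numbers} applies verbatim. Everything else is the closure of C-finite sequences under finite sums and products, already recorded in Lemma~\ref{lem:closure}.
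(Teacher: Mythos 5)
Your proposal is correct and follows essentially the same route as the paper: realize the shifted entries of $a$ and $b$ (resp.\ their pairwise products) as coordinates of a vector satisfying a recurrence $\overline{v_{n+1}}=\tfrac1{w_n}M_n\overline{v_n}$ with a C-finite matrix $M_n$, and then invoke Lemma~\ref{th:matrix-to-rec-numbers}. For the product your construction is literally the paper's: the tensor-like vector $(a_{n+t_1}b_{n+t_2})_{0\le t_1,t_2\le s-1}$ together with the bilinear relation Eq.~(\ref{eq:multiplication-formula}) gives a closed C-finite update, and $a_nb_n$ is directly one of the coordinates. For the sum there is a small but worthwhile difference: you worry that applying the lemma to the block vector $(a_{n+s-1},\dots,a_n,b_{n+s-1},\dots,b_n)$ only shows each of $a_n,b_n$ is \Ctem-finite ``which is the hypothesis,'' and so you append $a_n+b_n$ as an extra coordinate. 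You did not need to: the second clause of Lemma~\ref{th:matrix-to-rec-numbers} asserts that \emph{all} coordinates of $\overline{v_n}$ satisfy the \emph{same} linear recurrence with C-finite coefficients (only the initial conditions differ), and the solution set of a fixed homogeneous linear recurrence is a vector space, so $a_n+b_n$ automatically satisfies it. That is exactly how the paper's proof of part (i) is meant to close. Your workaround is also valid, just one coordinate heavier. Finally, your caution about the denominator $w_n$ is reasonable: Definition~\ref{def:Ct-finite} already requires the leading coefficients $c_n^{(s)},d_n^{(s)}$ to be nonzero for all $n$, so $w_n=c_n^{(s)}d_n^{(s)}$ never vanishes; the word ``positive'' in Lemma~\ref{th:matrix-to-rec-numbers} should be read as ``nonvanishing'' over $\mathbb{C}$, and no Skolem--Mahler--Lech repair or passage to an arithmetic progression is actually needed here.
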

\begin{proof}
Let $a_{n}$ and $b_{n}$ satisfy the following recurrences

\begin{eqnarray*}
c_{n}^{(s)}a_{n+s} & = & c_{n}^{(s-1)}a_{n+s-1}+\cdots+c_{n}^{(0)}a_{n}\\
d_{n}^{(s)}b_{n+s'} & = & d_{n}^{(s'-1)}b_{n+s'-1}+\cdots+d_{n}^{(0)}b_{n}
\end{eqnarray*}
 where the sequences $c_{n}^{(i)}$ and $d_{n}^{(i)}$ are C-finite
and $c_{n}^{(s)}$ and $d_{n}^{(s)}$ are non-zero. It is convenient
to assume without loss of generality that $s=s'$. We apply Lemma
\ref{th:matrix-to-rec-numbers} for both cases:
\begin{enumerate}
\item For $a_{n}+b_{n}$, let $\overline{v_{n+1}}=\left(a_{n+1},\ldots,a_{n+2-s},b_{n+1},\ldots,b_{n+2-s}\right)^{tr}$
and 
\[
M=\frac{1}{c_{n}^{(s)}d_{n}^{(s)}}\left(\begin{array}{cccccc}
c_{n+1}^{(s-1)}d_{n}^{(s)} & \cdots & c_{n+2-s}^{(0)}d_{n}^{(s)} & d_{n+1}^{(s-1)}c_{n}^{(s)} & \cdots & d_{n+2-s}^{(0)}c_{n}^{(s)}\\
d_{n}^{(s)}\\
 & \ddots\\
 &  & d_{n}^{(s)}\\
 &  &  & c_{n}^{(s)}\\
 &  &  &  & \ddots\\
 &  &  &  &  & c_{n}^{(s)}
\end{array}\right)
\]
where the empty entries are taken to be $0$. We have $\overline{v_{n+1}}=M\overline{v_{n}}$
and the claim follows from Theorem \ref{th:matrix-to-rec-numbers}. 

\begin{enumerate}
\item For $a_{n}b_{n}$, we have
\begin{eqnarray}
c_{n}^{(s)}d_{n}^{(s)}a_{n+s}b_{n+s} & = & \sum_{t_{1},t_{2}=0}^{s-1}c_{n}^{(t_{1})}d_{n}^{(t_{2})}a_{n+t_{1}}b_{n+t_{2}}\,.\label{eq:multiplication-formula}
\end{eqnarray}
Let $\overline{v_{n+1}}=\left(a_{n+1-t_{1}}b_{n+1-t_{2}}:\,0\leq t_{1},t_{2}\leq s-1\right)^{tr}$.
Similarly to the case of $a_{n}+b_{n}$, we can define $M$ such that
$\overline{v_{n+1}}=\frac{1}{c_{n}^{(s)}d_{n}^{(s)}}M\overline{v_{n}}$,
where the first row of $M$ corresponds to Eq. (\ref{eq:multiplication-formula}),
and the subsequent rows consist of non-zero value and otherwise $0$s. 
\end{enumerate}
\end{enumerate}

\section{Fibonacci numbers \label{se:fibonacci}}
The Fibonacci number $F_{n}$, given by the famous recurrence 
\[
F_{n+2}=F_{n+1}+F_{n}
\]
with $F_{1}=1,\, F_{2}=1$, can also be described in terms of counting
binary words. $F_{n}$ counts the binary words of length $n-2$ which
do not contain consecutive $1$s. Similarly, $F_{n-1}$ counts the
binary words of length $n-2$ which begin with $0$ (or, equivalently,
end with $0$), and $F_{n-2}$ counts the binary words which begin
(end) with $1$. Let $W_{n}=F_{n+2}$.

Let $0<k<m$, then
\[
W_{m+k}=W_{k-1}W_{m}+W_{k-2}W_{m-1}
\]
since $W_{k-1}W_{m}$ counts the binary words of length $m+k$ with
no consecutive $1$s which have $0$ at index $k$, and $W_{k-2}W_{m-1}$
counts the binary words with no consecutive $1$s which have $1$
at index $k$(and therefore $0$at index$k-1$. This translates back
to the Fibonacci numbers as:
\[
F_{m+k+2}=F_{k+1}F_{m+2}+F_{k}F_{m+1}
\]
So we have for the appropriate choices of $m$ and $k$: 
\begin{eqnarray}
F_{(n+1)^{2}} & = & F_{2n+1}F_{n^{2}+1}+F_{2n}F_{n^{2}}\label{eq:Fib_nPlusOneSquare}\\
F_{n^{2}} & = & F_{2n}F_{(n-1)^{2}}+F_{2n-1}F_{(n-1)^{2}-1}\nonumber \\
F_{n^{2}+1} & = & F_{2n+1}F_{(n-1)^{2}}+F_{2n}F_{(n-1)^{2}-1}\nonumber 
\end{eqnarray}
Extracting $F_{(n-1)^{2}-1}$ from the second equation, we get: 
\begin{eqnarray*}
F_{(n-1)^{2}-1} & = & \frac{F_{n^{2}}-F_{2n}F_{(n-1)^{2}}}{F_{2n-1}}
\end{eqnarray*}
and substituting $F_{(n-1)^{2}-1}$ in the third equation, we have:
\begin{eqnarray*}
F_{n^{2}+1} & = & \frac{F_{2n}F_{n^{2}}+(F_{2n-1}F_{2n+1}-F_{2n}^{2})F_{(n-1)^{2}}}{F_{2n-1}}
\end{eqnarray*}
and substituting into Eq. (\ref{eq:Fib_nPlusOneSquare}), we have
\begin{eqnarray*}
F_{2n-1}F_{(n+1)^{2}} & = & F_{2n}\left(F_{2n+1}+F_{2n-1}\right)F_{n^{2}}+F_{2n+1}\left(F_{2n-1}F_{2n+1}-F_{2n}^{2}\right)F_{(n-1)^{2}}
\end{eqnarray*}
where $F_{2n-1}$, $F_{2n}\left(F_{2n+1}+F_{2n-1}\right)$ and $F_{2n+1}\left(F_{2n-1}F_{2n+1}-F_{2n}^{2}\right)$
are C-finite by the closure properties of C-finite sequences in Lemma
\ref{lem:closure}. Similarly, we can derive the following \Ct-recurrence
for $F_{\binom{n+1}{2}}$: 
\begin{eqnarray*}
F_{n-1}F_{\binom{n+1}{2}} & = & \left(F_{n-1}F_{n+1}+F_{n}F_{n-2}\right)F_{\binom{n}{2}}+\left(F_{n}F_{n-1}^{2}-F_{n-2}F_{n}^{2}\right)F_{\binom{n-1}{2}}
\end{eqnarray*}

The sequences $F_{n^2}$ and $F_{\binom{n}{2}}$ are catalogued in 
the On-Line Encyclopedia of Integer Sequences \cite{OEIS}
as (A054783) and (A081667).

\end{proof}

\section{Bi-iterative graph families \label{se:bi-iterative}}

In this section we define  the notion of a bi-iterative
graph family, give examples for some simple families which are bi-iterative 
and provide some simple lemmas for them. The graph families
we are interested in are built recursively by applying {\em basic operations}
on $k$-graphs. A {\em $k$-graph} is of the form
\[
G=\left(V,E;R_{1},\ldots,R_{k}\right)
\]
where $\left(V,E\right)$ is a simple graph and $R_{1},\ldots,R_{k}\subseteq V$
partition $V$. The sets $R_{1},\ldots,R_{k}$ are called {\em labels}.
The labels are used technically to aid in the description of the graph
families, but we are really only interested in the underlying graphs.
Before we give precise definitions and auxiliary lemmas for constructing
bi-iterative graph families, we give some examples of bi-iterative
graph families.

\begin{figure}
\begin{center}
\begin{tabular}{ccc}
\includegraphics[scale=0.8]{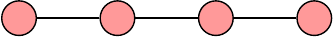}~~~~~~ & ~~~~~~\includegraphics[angle=90,scale=0.8]{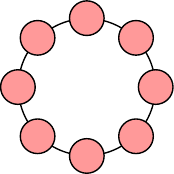}
~~~~~~ & ~~~~~~\includegraphics[angle=90,scale=0.8]{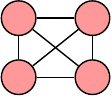}
\tabularnewline \, & &
\tabularnewline
\includegraphics[scale=0.8]{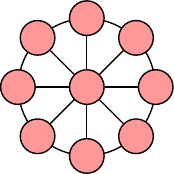}~~~~~~ & ~~~~~~\includegraphics[scale=0.8]{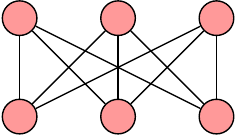}
~~~~~~ & ~~~~~~\includegraphics[angle=90,scale=0.8]{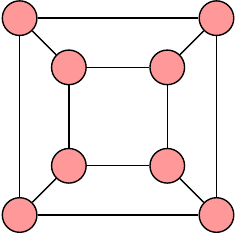}

\tabularnewline
\end{tabular}
\end{center}
\caption{\label{fig:iter-families} Examples of graphs belonging to the iterative
families: paths, cycles, cliques, wheels, complete bipartite graphs and prisms. They are also bi-iterative families. }
\end{figure}

\begin{figure}

\begin{tabular}{cc}
\includegraphics[scale=0.8]{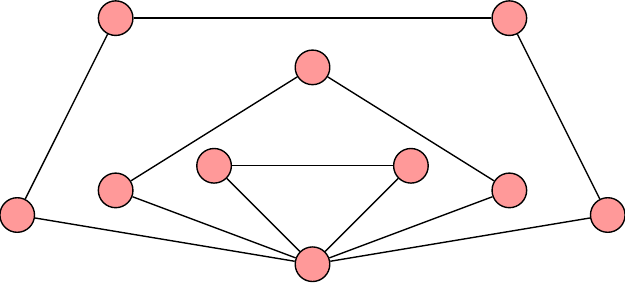}~~~~~~ &~~~~~~  \includegraphics[scale=0.8]{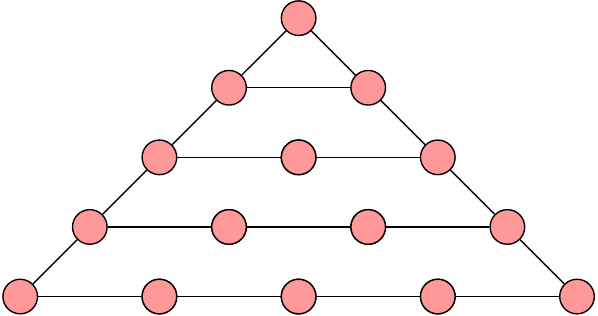}\tabularnewline
$G_{3}^{1}$ & $G_{3}^{4}$ \tabularnewline
 & \tabularnewline
\includegraphics[scale=0.8]{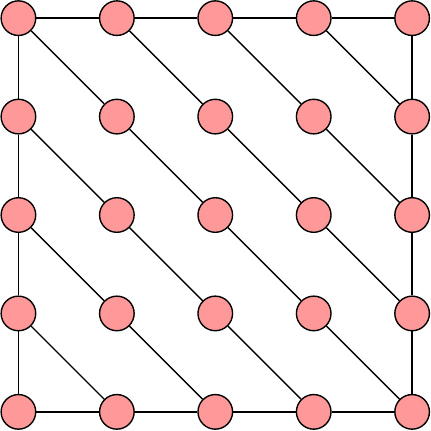} & \includegraphics{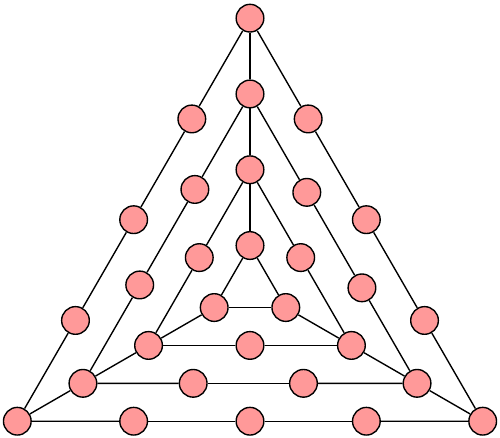}\tabularnewline
$G_{3}^{5}$ & $G_{3}^{6}$\tabularnewline
 & \tabularnewline
\end{tabular}

\begin{center}%
\begin{tabular}{c}
\includegraphics{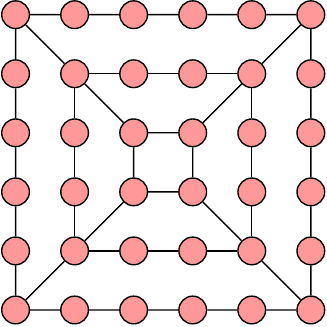}\tabularnewline
$G_{2}^{7}$\tabularnewline
\end{tabular}\end{center}

\caption{\label{fig:bi-families-bounded} Examples of graphs belonging to some of the bi-iterative
families of Example \ref{ex:families}. The bi-iterative families $G_n^1$, $G_n^4$, $G_n^5$ and $G_n^6$ are {\em bounded}.
}
\end{figure}

\begin{figure}
\begin{center}
\begin{tabular}{cc}
\includegraphics[angle=90,scale=0.8]{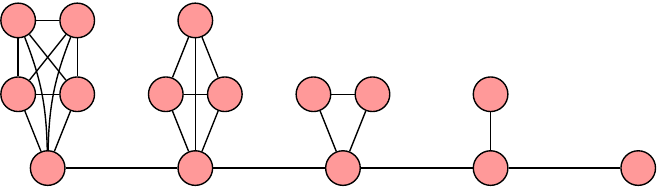}~~~ & ~~~\includegraphics[scale=0.8]{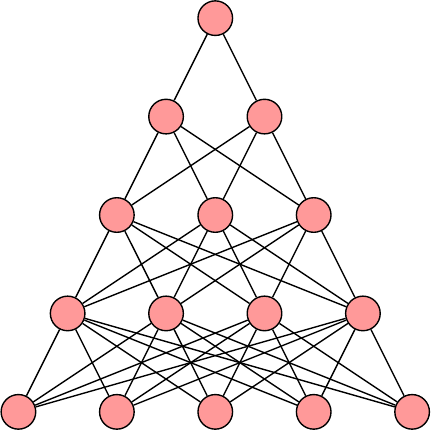}\tabularnewline
 $G_{4}^{2}$& $G_{4}^{3}$ \tabularnewline
\end{tabular}
\end{center}
\caption{\label{fig:bi-families-unbounded} Examples of graphs belonging to the bi-iterative
families of Example \ref{ex:families} not depicted in Figure \ref{fig:bi-families-bounded}. The bi-iterative families $G_n^2$ and $G_n^3$ 
are {\em not bounded}. }
\end{figure}

\begin{example}
[Bi-iterative graph families]\label{ex:families} See Figures \ref{fig:iter-families}, \ref{fig:bi-families-bounded}
and \ref{fig:bi-families-unbounded}
for illustrations of the following graph families. 
\begin{enumerate}
\item 
Iteratively families, such as paths, cycles, and cliques,
serve as simple examples of bi-iteratively constructible families.
\item $G_{0}^{1}$ is a single vertex labeled $2$. For each $n$, $G_{n}^{1}$
has one vertex labeled $2$ and all others are labeled $1$. $G_{n}^{1}$
is obtained from $G_{n-1}^{1}$ by adding a cycle of size $n+2$ and
identifying one vertex of the cycle with the vertex labeled $2$ is
in $G_{n-1}^{1}$. All other vertices in the cycle are labeled $1$. 
\item $G_{n}^{2}$ is obtained from a path of length $n+1$ by adding, for
each vertex $1\leq i\leq n+1$, a new clique of size $i$, and identifying
one vertex of the clique with $i$. 
\item $G_{0}^{3}$ is a single vertex labeled $2$. $G_{n}^{3}$ is obtained
from $G_{n-1}^{3}$ by adding $n+1$ isolated vertices labeled $3$,
adding all possible edges between vertices labeled $2$ and vertices
labeled $3$, relabel all from $2$ to $1$, and then from $3$ to
$2$.
\item $G_{0}^{4}$ consists of a triangle in which the vertices are labeled
$1$,$2$,$3$. $G_{n}^{4}$ is obtained from $G_{n-1}^{4}$ by adding
a path $P_{n+2}$ whose end-points are labeled $4$ and $5$. Then,
the edges $\{2,4\}$ and $\{3,5\}$ are added, and the labels are
changed so that the endpoints of the $P_{n+2}$ path are now labeled
$2$ and $3$, and all other vertices in $G_{n}^{4}$ are labeled
$1$. 
\item $G_{0}^{5}$ is obtained by taking two disjoint copies of $G_{0}^{4}$
and respectively identifying the vertices labeled $2$ and $3$. $G_{n}^{5}$
is obtained from two disjoint copies of $G_{n-1}^{4}$ by adding a
path $P_{n+2}$ and connecting each of its endpoints to the corresponding
end-points labeled $2$ and $3$ of the two copies of $G_{n}^{5}$. 
\item $G_{0}^{6}$ consists of a triangle in which the vertices are labeled
$2$,$3$,$4$. $G_{n}^{6}$ is obtained by adding to $G_{n-1}^{6}$
a cycle of size $3n+3$ in which three vertices are labeled $5$,$6$,$7$.
Between each of the pairs $(5,6)$, $(6,7)$ and $(5,7)$ there are
$n$ vertices labeled $1$. Then, $2,3,4$ are connected to $5,6,7$
respectively, and the labels are changed so that only the vertices
labeled $5,6,7$ remain labeled, and their new labels are $2,3,4$. 
\item The family $G_{n}^{7}$ is similar to $G_{n}^{6}$, except we add
a cycle of size $8n+4$, we have four distinguished vertices separated
by $n$ vertices labeled $1$, etc. 
\end{enumerate}
\end{example}
Now we proceed to define the precise definitions which allow us to
build such families. 
\begin{defn}
[Basic and elementary operations] \label{def:basic}~

The following are the {\em basic operations} on $k$-graphs:
\begin{enumerate}
\item $Add_{i}(G)$: A new vertex is added to $G$, where the new vertex
belongs to $R_{i}$;
\item $\rho_{i\to j}(G)$: All the vertices in $R_{i}$ are moved to $R_{j}$,
leaving $R_{j}$ empty; 
\item $\eta_{i,j}(G)$: All possible edges between vertices labeled $i$
and vertices labeled $j$ are added;
\item $\eta_{i,j}^{b}(G)$: If $R_{i}\cup R_{j}\leq b$, then $\eta_{i,j}^{b}(G)=\eta_{i,j}(G)$;
otherwise $\eta_{i,j}^{b}(G)=G$; 
\item $\delta_{i,j}(G)$: All edges between vertices labeled $i$ and vertices
labeled $j$ are removed. 
\end{enumerate}

An operation $F$ on $k$-graphs is {\em elementary} if $F$ is
a finite composition of any of the basic operations on $k$-graphs.
We denote by $id$ the elementary operation which leaves the $k$-graph
unchanged. 

\end{defn}

\begin{defn}
[Bi-iterative graph families] ~

Let $k\in\mathbb{N}$, $G_{0}$ be a $k$-graph and $F,H,L$ be elementary
operations on $k$-graphs, 
\begin{enumerate}
\item The sequence $F(G_{n}):\, n\in\mathbb{N}$ is called an $F$-iteration
family and is said to be an {\em iteratively constructible family}.
\item The sequence $G_{n+1}=H\left(F^{n}(L(G_{n}))\right):\, n\in\mathbb{N}$
is called an $(H,F,L)$-bi-iteration family and is said to be a {\em bi-iteratively constructible family}.
By $F^{n}(G)$ we mean the result of performing $n$ consecutive applications
of $F$ on $G$.
\end{enumerate}

Let $G_{n}:\, n\in\mathbb{N}$ be a family of graphs. This family
is (bi-)iteratively constructible if there exists $k\in\mathbb{N}$
and a family $G_{n}':\, n\in\mathbb{N}$ of $k$-graphs which is (bi-)iteratively
constructible, such that $G_{n}$ is obtained from $G_{n}'$ by ignoring
the labels. 

\end{defn}
It is sometimes convenient to describe $G_{0}$ using basic operations
on the empty graph $\emptyset$. 

We can now prove the observation from Example \ref{ex:families}(1):
\begin{lem}\label{lem:paths}
Every iteratively constructible family is bi-iteratively constructible.
\end{lem}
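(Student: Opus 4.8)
The plan is to take an iteratively constructible family $F(G_n)$ — more precisely a family of $k$-graphs of the form $G_{n+1} = F(G_n)$ with $G_0$ a fixed $k$-graph and $F$ an elementary operation — and exhibit it directly as an $(H,F',L)$-bi-iteration family for a suitable choice of elementary operations $H$, $F'$, $L$. Recall that in a bi-iteration family we have $G_{n+1} = H\bigl(F'^{\,n}(L(G_n))\bigr)$, so the ``inner'' iteration $F'^{\,n}$ is applied $n$ times at step $n$, whereas in an ordinary iteration family the step operation $F$ is applied exactly once. The obvious move is therefore to kill the inner iteration: I would set $F' = id$, the elementary operation that leaves the $k$-graph unchanged, which by Definition \ref{def:basic} is elementary. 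Then $F'^{\,n}(G) = G$ for every $n$, and the bi-iteration recurrence collapses to $G_{n+1} = H(L(G_n))$.

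Now I simply need $H(L(G_n))$ to reproduce $F(G_n)$. Take $L = id$ and $H = F$: both are elementary (the identity is elementary by definition, and $F$ is elementary by hypothesis), and a finite composition of elementary operations is again elementary, so the triple $(F, id, id)$ is a legitimate choice of elementary operations. With this choice the $(F, id, id)$-bi-iteration family starting from $G_0$ is exactly the sequence defined by $G_{n+1} = F\bigl(id^{\,n}(id(G_n))\bigr) = F(G_n)$, which is the original $F$-iteration family. Hence every iteratively constructible family of $k$-graphs is bi-iteratively constructible. Finally, for a family of ordinary (unlabelled) graphs $G_n$ that is iteratively constructible, by definition there is a $k$ and an iteratively constructible family $G_n'$ of $k$-graphs whose label-erasure is $G_n$; applying the argument above to $G_n'$ shows $G_n'$ is bi-iteratively constructible, and erasing labels then witnesses that $G_n$ is bi-iteratively constructible.

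I do not anticipate a genuine obstacle here — the statement is essentially a definitional unwinding, and the only thing to check carefully is that $id$ is permitted as an elementary operation (it is, by the last sentence of Definition \ref{def:basic}) and that iteration of $id$ is harmless, so that the $n$-fold inner application in the bi-iterative scheme can be rendered inert. The one mild subtlety worth a sentence in the writeup is the bookkeeping between the two indexing conventions ($G_{n+1}=F(G_n)$ for iteration versus $G_{n+1}=H(F^n(L(G_n)))$ for bi-iteration) and the role of $G_0$, but with $F'=id$, $L=id$, $H=F$ these line up on the nose.
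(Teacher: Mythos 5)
Your proof is correct and takes exactly the same route as the paper: the paper observes in one line that an $F$-iteration family is an $(F,id,id)$-bi-iteration family, which is precisely your choice $H=F$, $F'=id$, $L=id$. Your extra sentence handling the label-erasure step for plain graph families is a sound (if unstated in the paper) bit of bookkeeping.
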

\begin{proof}
If $F$ is an elementary operation such that $G_{n}:\, n\in\mathbb{N}$
is an $F$-iteration family, then $G_{n}:\: n\in\mathbb{N}$ is also
an $(F,id,id)$-bi-iteration family.
\end{proof}

All of the families
in Example \ref{ex:families} are bi-iteratively constructible
families which are not iteratively constructible. The all grow 
too quickly to be iteratively constructible. 
Now consider for instance $G_{n}^{3}$. 
Let $F=Add_{3}$, $H=Add_{3}\circ\eta_{2,3}\circ\rho_{2\to1}\circ\rho_{3\to2}$
and $L=\emptyset$. We have $G_{n+1}^{3}=H(F^{n}(L(G_{n})))$.

In the sequel we will want to distinguish a particular type of bi-iterative
families, in which every application of $\eta_{i,j}$ only adds at
most a fixed amount of edges. 
\begin{defn}
[Bounded bi-iterative families] A basic operation is {\em bounded}
if it not of the type $\eta_{i,j}$. A bi-iteratively constructible
graph family $G_{n}:\, n\in\mathbb{N}$ is {\em bounded} if its construction
uses only bounded basic operations. \end{defn}
\begin{example}
Considering the families of Example \ref{ex:families}, it is not hard 
to see that $G_{n}^{1}$, $G_{n}^{4}$, $G_{n}^{5}$,$G_{n}^{6}$,$G_{n}^{7}$
are bounded bi-iterative families, while $G_{n}^{2}$,$G_{n}^{3}$
are bi-iterative families which are not bounded. 
\end{example}

\subsection{Lemmas for building bi-iterative graph families}
Here we give some lemmas which are useful to make the construction of bi-iterative families easier.
Their aim is to help the reader
understand which families of graph are bi-iterative. 
\begin{lem}
Let $G_{n}^{A},G_{n}^{B}:\, n\in\mathbb{N}$ be two bi-iteratively
constructible families. The family $G_{n}^{A}\sqcup G_{n}^{B}:\, n\in\mathbb{N}$
obtained by taking the disjoint union of the two families is bi-iteratively
constructible. In particular, if both families $G_{n}^{A},G_{n}^{B}:\, n\in\mathbb{N}$
are iteratively constructible, then so is $G_{n}^{A}\sqcup G_{n}^{B}:\, n\in\mathbb{N}$. \end{lem}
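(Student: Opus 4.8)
The plan is to reduce the disjoint union of two bi-iterative families to a single bi-iterative family built on the disjoint union of the underlying $k$-graphs, using the basic operations applied to disjoint label sets. First I would take, for $G_n^A$, data $k_A$, a $k_A$-graph $G_0^A$ and elementary operations $F_A,H_A,L_A$ witnessing $G_{n+1}^A = H_A(F_A^n(L_A(G_n^A)))$; and similarly $k_B,G_0^B,F_B,H_B,L_B$ for $G_n^B$. The key idea is to work with $k=k_A+k_B$ labels, reserving labels $1,\dots,k_A$ for the first component and $k_A+1,\dots,k_A+k_B$ for the second. Set $G_0 = G_0^A \sqcup G_0^B$ where the copy of $G_0^B$ has every label index shifted up by $k_A$. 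For each basic operation on $k_A$-graphs appearing in $F_A,H_A,L_A$ I use the corresponding basic operation on $k$-graphs with the same indices; for each basic operation appearing in $F_B,H_B,L_B$ I shift all label indices up by $k_A$. Since the basic operations $Add_i,\rho_{i\to j},\eta_{i,j},\eta_{i,j}^b,\delta_{i,j}$ only touch the labels they name, the $A$-operations act only on the $A$-component and leave the $B$-component untouched, and vice versa.

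Next I would define $F,H,L$ on $k$-graphs. The natural choice is $F = F_A \circ F_B'$, $H = H_A \circ H_B'$, $L = L_A \circ L_B'$, where $\cdot'$ denotes the index-shifted version, and one checks these commute in the relevant sense because they act on disjoint sets of labels (equivalently on disjoint vertex sets, since the two components are never connected). Then by induction on $n$ one verifies $F^n(G) = F_A^n \circ (F_B')^n$ applied to $G$ when $G$ is a disjoint union of an $A$-component and a (shifted) $B$-component, and hence
\[
H(F^n(L(G_n^A \sqcup G_n^B))) = H_A(F_A^n(L_A(G_n^A))) \;\sqcup\; H_B(F_B^n(L_B(G_n^B))) = G_{n+1}^A \sqcup G_{n+1}^B,
\]
so $G_n := G_n^A \sqcup G_n^B$ (with shifted labels on the $B$-part) is the $(H,F,L)$-bi-iteration family starting from $G_0$. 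Finally, ignoring the labels recovers the unlabeled disjoint union $G_n^A \sqcup G_n^B$, giving bi-iterative constructibility of the latter. The "in particular" clause follows since if $H_A=L_A=H_B=L_B=id$, then $H=L=id$ as well, so the construction is an $F$-iteration family, i.e.\ iteratively constructible; alternatively it is immediate from Lemma~\ref{lem:paths} applied after handling the iterative case separately.

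The main obstacle I anticipate is purely bookkeeping: making the "commute because they act on disjoint labels" argument precise, i.e.\ verifying that an elementary operation built only from basic operations on indices in $\{1,\dots,k_A\}$ genuinely commutes, as a function on $k$-graphs, with one built only from basic operations on indices in $\{k_A+1,\dots,k\}$. This is true because each basic operation's effect on a $k$-graph $(V,E;R_1,\dots,R_k)$ is determined locally by the named labels — $Add_i$ adds a vertex to $R_i$, the $\eta$'s and $\delta$'s add or remove edges within $R_i\cup R_j$, and $\rho_{i\to j}$ moves $R_i$ into $R_j$ — and when the two index ranges are disjoint these modifications are on disjoint parts of the structure, hence independent of order. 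One must also take a moment to confirm that $F^n$ behaves correctly under this splitting (so that the $n$-fold iteration in the bi-iterative definition respects the disjoint union), but this is again a straightforward induction once the commutation is in hand. No genuinely new idea beyond the shifted-label disjoint union is needed.
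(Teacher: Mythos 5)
Your proposal is correct and follows essentially the same route as the paper's proof: rename (shift) the label indices so the two families use disjoint label sets, then take the componentwise compositions $H = H_A\circ H_B$, $F = F_A\circ F_B$, $L = L_A\circ L_B$ to witness bi-iterativity of the disjoint union. You spell out the commutation and the induction on $n$ more explicitly than the paper, which simply asserts the composition works after the w.l.o.g. relabelling, but the underlying idea is identical.
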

\begin{proof}
Let $H_{O},F_{O},L_{O}$ be elementary operations such that $G_{n}^{O}:\, n\in\mathbb{N}$
is an $(H_{O},F_{O},L_{O})$-bi-iteration family for $i=A,B$. We can
assume w.l.o.g. that the labels of the two families are disjoint;
if they are not, we can simply rename the labels used by one of the
families. The family $G_{n}^{A}\sqcup G_{n}^{B}:\, n\in\mathbb{N}$
is an $(H_{A}\circ H_{B},F_{A}\circ F_{B},L_{A}\circ L_{B})$-bi-iteration
family, where $\circ$ denotes the composition of operations. The
case in which $G_{n}^{A},G_{n}^{B}:\, n\in\mathbb{N}$ are iteratively
constructible is similar. 
\end{proof}

\begin{lem}
Let $G_{n}:\, n\in\mathbb{N}$ and $J_{n}:\, n\in\mathbb{N}$ be iteratively
constructible families of $k$-graphs whose basic operations use distinct
labels. The family $G_{n}\sqcup J_{n}$ is an iteratively constructible
family.\end{lem}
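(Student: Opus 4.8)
The plan is to realize $G_{n}\sqcup J_{n}$ directly as an iteration family. Write $G_{n+1}=F(G_{n})$ with base $k$-graph $G_{0}$, and $J_{n+1}=\tilde F(J_{n})$ with base $k$-graph $J_{0}$, where $F$ and $\tilde F$ are the elementary operations witnessing that the two families are iteratively constructible; thus $G_{n}=F^{n}(G_{0})$ and $J_{n}=\tilde F^{n}(J_{0})$. Let $S_{G}$ and $S_{J}$ be disjoint sets of labels such that every basic operation occurring in $F$, as well as every vertex of $G_{0}$, uses only labels in $S_{G}$, while every basic operation in $\tilde F$, as well as every vertex of $J_{0}$, uses only labels in $S_{J}$; such $S_{G},S_{J}$ exist by the distinct-labels hypothesis (after renaming labels as in the previous lemma if needed). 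Since $F$ only creates or moves labels within $S_{G}$ and $\tilde F$ only within $S_{J}$, it follows by induction that every vertex of every $G_{n}$ carries a label in $S_{G}$ and every vertex of every $J_{n}$ carries a label in $S_{J}$.

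The core of the argument is a locality claim: if $B$ is a basic operation all of whose label indices lie in $S_{G}$, then for every $k$-graph $A$ with labels in $S_{G}$ and every $k$-graph $A'$ with labels in $S_{J}$ one has $B(A\sqcup A')=B(A)\sqcup A'$. I would verify this by inspecting the five basic operation types of Definition~\ref{def:basic}. For $Add_{i}$ with $i\in S_{G}$ the new vertex receives label $i$, hence lands in the $A$-summand; for $\rho_{i\to j}$, $\eta_{i,j}$ and $\delta_{i,j}$ with $i,j\in S_{G}$, the vertices labeled $i$ or $j$ in $A\sqcup A'$ are exactly those of $A$ (the $A'$-part has none, as $i,j\notin S_{J}$), so the relabeling, edge additions, or edge deletions all take place inside $A$; and for the bounded operation $\eta_{i,j}^{b}$ one observes in addition that $R_{i}\cup R_{j}$ is literally the same set in $A$ and in $A\sqcup A'$, so the cardinality test $\left|R_{i}\cup R_{j}\right|\le b$ has the same outcome in both, and $\eta_{i,j}^{b}$ behaves identically. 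Iterating over a composition, $B(A\sqcup A')=B(A)\sqcup A'$ persists for every elementary operation $B$ built from basic operations with indices in $S_{G}$; by symmetry, $\tilde B(A\sqcup A')=A\sqcup\tilde B(A')$ for every elementary $\tilde B$ with indices in $S_{J}$.

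Applying the claim to $F$ and then to $\tilde F$ gives, for every $n$,
\[
(F\circ\tilde F)(G_{n}\sqcup J_{n})=F\bigl(G_{n}\sqcup\tilde F(J_{n})\bigr)=F(G_{n})\sqcup\tilde F(J_{n})=G_{n+1}\sqcup J_{n+1},
\]
so by induction $(F\circ\tilde F)^{n}(G_{0}\sqcup J_{0})=G_{n}\sqcup J_{n}$ for all $n$. Since $F\circ\tilde F$ is a finite composition of basic operations it is an elementary operation, and $G_{0}\sqcup J_{0}$ is a $k$-graph; hence $G_{n}\sqcup J_{n}$ is the $(F\circ\tilde F)$-iteration family with base $G_{0}\sqcup J_{0}$, i.e. iteratively constructible, as required.

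The only labor is the routine case check in the locality claim, and the sole place where disjointness of $S_{G}$ and $S_{J}$ is genuinely used is the $\eta_{i,j}^{b}$ case together with the observation that $i,j\in S_{G}$ forces all affected vertices into the left summand; there is no real obstacle. This argument is also the substance behind the iteratively-constructible case asserted ``in particular'' in the previous lemma: there the labels are first made disjoint by renaming, whereas here disjointness is part of the hypothesis, so no renaming step is needed.
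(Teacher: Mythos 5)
Your proof is correct and follows essentially the same approach as the paper: form the composite elementary operation $F\circ\tilde F$ (the paper writes $F_G\circ F_J$) and iterate from the base graph $G_0\sqcup J_0$. The paper's proof is only three sentences and omits the verification that basic operations with label indices in $S_G$ act locally on the $G$-summand of a disjoint union; your ``locality claim'' and its case check supply exactly this missing justification.
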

\begin{proof}
Let $F_{G}$ and $F_{J}$ be the elementary operations associated
with the two families. Let $F$ be the composition $F_{G}\circ F_{J}$.
The iteratively constructible family whose underlying elementary operation
is $F$ is $G_{n}\sqcup J_{n}$. 
\end{proof}

\begin{lem}
\label{lem:bi-from-i}Let $G_{n}:\, n\in\mathbb{N}$ be an iteratively
constructible family of $k$-graphs and let $H$ and $L$ be two elementary
operations over $k$-graphs. Let $D_{0}$ be a $k$-graph, and $D_{n+1}=H(L(D_{n})\sqcup G_{n})$.
The family $D_{n}:\, n\in\mathbb{N}$ is bi-iteratively constructible. \end{lem}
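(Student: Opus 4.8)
The plan is to realize $D_n$ as a bi-iteration family by absorbing the iteratively constructible "side" family $G_n$ into the bi-iteration operator. Recall that $G_n \colon n\in\mathbb{N}$ is $F_G$-iterative for some elementary operation $F_G$, meaning $G_{n+1} = F_G(G_n)$ (after a suitable reindexing we may assume $G_n = F_G^n(G_0)$). The key idea is that the disjoint union $\sqcup G_n$ appearing in the recursion for $D_n$ can be simulated by carrying a disjoint copy of the $G$-construction alongside the $D$-construction. So first I would pass to $k'$-graphs for $k' = 2k$ (or however many labels are needed), using one block of labels for the "$D$-part" and a disjoint block for the "$G$-part"; by renaming we may assume the labels used by $H$, $L$, $D_0$ are disjoint from those used by $F_G$ and $G_0$.

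Next I would define the combined graph $\widetilde{D}_n = D_n \sqcup G_n$ and show it is bi-iteratively constructible as a $k'$-graph family, from which the lemma follows by ignoring labels (and in particular ignoring the $G$-component, since $D_n$ is an induced subgraph obtained by deleting the vertices with $G$-labels — but actually we only need to erase labels, and $D_n$ is already a subgraph on a distinguished label set, so the final "ignore labels" clause of the definition of bi-iterative families handles it). The recursion is $\widetilde{D}_{n+1} = D_{n+1} \sqcup G_{n+1} = H(L(D_n)\sqcup G_n) \sqcup F_G(G_n)$. The plan is to write this in the form $\widetilde{D}_{n+1} = \widetilde{H}(\widetilde{F}^n(\widetilde{L}(\widetilde{D}_n)))$ for suitable elementary operations. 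Since $H$ and $L$ only touch $D$-labels and $F_G$ only touches $G$-labels, the operations commute, and one can take $\widetilde{L} = L$ (acting trivially on the $G$-part), $\widetilde{F} = F_G$ (acting trivially on the $D$-part), and $\widetilde{H} = H \circ (\text{a copy of } F_G)$: one checks $\widetilde{H}(\widetilde{F}^n(\widetilde{L}(D_n \sqcup G_n))) = (H\circ F_G)(F_G^n(L(D_n) \sqcup G_n)) = H(L(D_n) \sqcup F_G^{n+1}(G_n)) \sqcup (\text{leftover})$.

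The one subtlety I expect to be the main obstacle is getting the bookkeeping of $F_G^n$ to line up exactly with the number of $F_G$-applications needed to go from $G_0$ to $G_{n+1}$: the bi-iteration scheme applies $\widetilde{F}$ exactly $n$ times at stage $n+1$, but we need $G_n$ to have received $n$ applications of $F_G$ already — this forces the convention $G_n = F_G^n(G_0)$, i.e. we must have $\widetilde{D}_0 = D_0 \sqcup G_0$ with $G_0$ in its "stage $0$" form, and then at each step $\widetilde{F}^n$ brings the $G$-part from stage... wait, no: the $G$-part of $\widetilde{D}_n$ is already $G_n = F_G^n(G_0)$, and applying $\widetilde{F}^n$ to it would give $F_G^{2n}(G_0)$, which is wrong. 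The fix is to \emph{not} carry $G_n$ itself but rather to let the bi-iteration's own $F^n$ rebuild the $G$-component from scratch at each stage: take $\widetilde{L}$ to first delete the entire $G$-part and reinstall $G_0$ (this is an elementary operation, since $G_0$ is a fixed finite graph built by basic operations on $\emptyset$ and deletion of a bounded label-class can be done via $\rho$ and vertex deletions — or more simply, since $\widetilde{L}$ may be any elementary operation and $G_0$ is fixed, we build $G_0$ fresh), so that $\widetilde{F}^n(\widetilde{L}(\widetilde{D}_n)) = F_G^n(L(D_n) \sqcup G_0) = L(D_n) \sqcup G_n$, and then $\widetilde{H} = H$ finishes. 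This requires that "delete all vertices with a given label" be expressible; if the basic operations as defined do not include vertex deletion, I would instead observe that we never actually need to see $G_n$ inside $\widetilde{D}_{n+1}$ — we can let the $G$-part accumulate harmlessly in a junk label that $H$, $L$, $F_G$ all ignore, and correct the indexing by starting the junk accumulation one step behind; working out which of these two implementations is cleanest, and verifying the index arithmetic, is the only real content of the proof, the rest being the routine "disjoint labels imply commuting operations" argument already used in the preceding lemmas.
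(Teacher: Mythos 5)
Your central idea --- discard the stale copy of $G_n$ and let the middle operator $\widetilde{F}^{\,n}$ rebuild it from scratch each step --- is exactly the paper's idea, and you correctly flag the obstacle: vertex deletion is not among the basic operations of Definition \ref{def:basic}. But the way you set things up forces you to need deletion, and the workaround you sketch does not recover.

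The spurious need for deletion comes from choosing $\widetilde{D}_n = D_n \sqcup G_n$ as the iterated object. There is no reason to carry $G_n$ at all: iterate $D_n$ itself. Work in $2k$ labels, with $D$-data in $\{1,\dots,k\}$ and the auxiliary $G$-construction in $\{k+1,\dots,2k\}$. Let $F'$ and $G_0'$ be $F$ and $G_0$ with labels shifted by $k$, let $a$ be the finite composition of $Add_{i+k}$ operations that installs a fresh disjoint copy of $G_0'$, and let $\rho$ be the composition of $\rho_{i+k\to i}$ that relabels the finished $G$-component back into $\{1,\dots,k\}$. Then $\widetilde{L} = a\circ L$, $\widetilde{F}=F'$, $\widetilde{H}=H\circ\rho$ give
\[
\widetilde{H}\bigl(\widetilde{F}^{\,n}(\widetilde{L}(D_n))\bigr)
 = H\bigl(\rho\bigl(L(D_n)\sqcup F'^{\,n}(G_0')\bigr)\bigr)
 = H\bigl(L(D_n)\sqcup G_n\bigr)=D_{n+1},
\]
all elementary, no deletion, no residue. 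Your fallback of letting stale $G$-parts pile up under an ``ignored'' junk label also fails: the closing clause of the definition of bi-iterative families only permits forgetting the \emph{labeling}, not deleting labeled vertices, so the underlying unlabeled graphs would be $D_n$ together with an ever-growing pile of leftover $G_i$'s rather than $D_n$. So as written the proposal has a real gap; the repair is to drop the $\sqcup G_n$ from the iterated object and absorb the installation of $G_0'$ into $\widetilde{L}$, which is precisely what the paper does.
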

\begin{proof}
Let $F$ be an elementary operation such that $G_{n}:\, n\in\mathbb{N}$
is an $F$-iteration family. Let $F'$ and $G_{0}'$ be the same as
$F$ and $G_{0}$, except that the labels they use are changed as
follows. If a basic operation in $F$ uses label $i$, then the corresponding
operation in $F'$ uses label $i+k$. For every $i=1,\ldots,k$, let
$\rho_{i}=\rho_{i\to i+k}$. Let $\rho$ be the composition $\rho_{1}\circ\cdots\circ\rho_{k}$.
If a vertex in $G_{0}$ has label $i$, then the corresponding vertex
in $G_{0}'$ has label $i+k$. For every vertex $v$ of $G_{0}$ with
label $i$, let $a_{v}=Add_{i+k}$. Let $a$ be the composition of
$a_{v}$, $v\in V(G)$. We have $D_{n+1}=H(\rho(F'^{n}(a(L(D_{n})))))$,
and therefore $D_{n}:\, n\in\mathbb{N}$ is a bi-iteratively constructible
family of $2k$-graphs. 
\end{proof}

Using Lemma \ref{lem:bi-from-i}, it is easy to show that some families
from Example \ref{ex:families} are indeed bi-iterative.
\begin{example}
Consider $G_{n}^{4}$ from Example \ref{ex:families}. From Lemma \ref{lem:paths} we 
get that $\tilde{P}_{n}=P_{n+3}^{4,5}$ is an iterative
family. We define $E_{n+1}=H(L(E_{n})\sqcup\tilde{P}_{n})$ with $L=\rho_{1\to5}\circ\rho_{2\to6}$
and $H=\eta_{2,4}\circ\eta_{3,5}\circ\rho_{2\to1}\circ\rho_{3\to1}\circ\rho_{4\to2}\circ\rho_{3\to5}$.
We get $G_{n}^{4}=E_{n}$. 
\end{example}

Subfamilies of iteratively constructible families give rise to many
other related bi-iteratively constructible families:
\begin{lem}
Let $G_{n}:\: n\in\mathbb{N}$ be iteratively constructible. 
\begin{enumerate}
\item \textup{$G_{\binom{n}{2}}:\, n\in\mathbb{N}$ and $G_{n^{2}}:\, n\in\mathbb{N}$
are bi-iteratively constructible.}
\item Let $c\in\mathbb{N}^{+}$ and $d,e\in\mathbb{Z}$. There exists $r\in\mathbb{N}$
such that $H_{n}=G_{cm^{2}+dm+e}:\, m\in\mathbb{N},\, m=n+r$ is bi-iteratively
constructible. 
\end{enumerate}
\end{lem}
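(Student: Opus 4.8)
The plan is to reduce both parts to the single-step construction mechanism provided by Lemma~\ref{lem:bi-from-i}, by exhibiting, for a subfamily with square-indexed or quadratic-indexed arguments, an auxiliary iteratively constructible family whose $n$-th term carries out exactly the ``jump'' from index $\zeta(n-1)$ to index $\zeta(n)$. The key observation is the telescoping identity used already in the proof of Theorem~\ref{mainth:2}: if $\zeta(n)=c\binom{n}{2}+dn+e$ then $\zeta(n)-\zeta(n-1)=cn+d$, which is a \emph{linear} function of $n$. So passing from $G_{\zeta(n-1)}$ to $G_{\zeta(n)}$ requires applying the underlying elementary operation $F$ of the iterative family exactly $cn+d$ times (a number growing linearly in $n$), and this is precisely the shape $F^{n}$ of the middle operation in a bi-iteration.

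First I would treat part~(1), $G_{n^2}$ and $G_{\binom{n}{2}}$, as the base case. Suppose $G_n\colon n\in\mathbb{N}$ is an $F$-iteration family of $k$-graphs with start graph $G_0$. For $G_{\binom{n}{2}}$ we have $\binom{n}{2}-\binom{n-1}{2}=n-1$, so set $H_n=G_{\binom{n}{2}}$ and observe $H_{n}=F^{n-1}(H_{n-1})$, i.e.\ $H_{n+1}=F^{n}(H_n)$; this is literally an $(id,F,id)$-bi-iteration family once we reindex so that the first few terms (where $\binom{n}{2}$ is small) are absorbed into $G_0$. For $G_{n^2}$ we have $n^2-(n-1)^2=2n-1$, so $H_{n+1}=F^{2n+1}(H_n)=F\circ F^{2n}(H_n)=(F\circ F)^{n}\circ F(H_n)$; taking $F^{\ast}=F\circ F$ as the iterated operation and $F$ as part of the ``head'' operation $H$, this is a bi-iteration family $H_{n+1}=H(F^{\ast\,n}(L(H_n)))$ with $H=F$, $L=id$ (up to a harmless reindexing to handle small $n$).

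Next, part~(2) is the same argument with bookkeeping for the sign of the exponent. Write $\zeta(n)=cn^2+dn+e$ (the statement writes $cm^2+dm+e$), so $\zeta(n)-\zeta(n-1)=2cn+d-c$, again linear in $n$ with positive leading coefficient $2c$. Choose $r\in\mathbb{N}$ large enough that for all $n\ge 0$ we have both $\zeta(n+r)\ge 0$ and $2c(n+r)+d-c\ge 0$; this is where $r$ enters the statement, exactly as in the analogous reindexing step inside the proof of Theorem~\ref{mainth:2} where one picks $r$ with $cr>|d|$. After this shift, $H_n:=G_{\zeta(n+r)}$ satisfies $H_{n}=F^{\,2c(n+r)+d-c}(H_{n-1})$, a nonnegative exponent that is an affine function $\alpha n+\beta$ of $n$ with $\alpha=2c$. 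Writing $F^{\alpha n+\beta}=(F^{\alpha})^{n}\circ F^{\beta}$ exhibits $H_n$ as an $(H,F^{\alpha},L)$-bi-iteration family with $H=F^{\beta}$ and $L=id$, after once more folding a bounded number of initial applications into the start graph. Invoking Lemma~\ref{lem:bi-from-i} (or simply the definition of a bi-iteration family directly) concludes.

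I expect the only real subtlety to be the reindexing, i.e.\ making the informal phrases ``absorb the first few terms into $G_0$'' and ``up to a harmless shift'' precise: one must check that the finitely many early graphs $G_{\zeta(0)},\dots,G_{\zeta(r-1)}$, together with the possibly negative or small exponents $\zeta(n)-\zeta(n-1)$ for small $n$, do not obstruct the construction, which is exactly why the parameter $r$ is needed and why the statement only asserts bi-iterativity of the shifted family $H_n$ with $m=n+r$. Everything else is the elementary algebraic identity that a difference of consecutive values of a quadratic is linear, combined with the syntactic fact that $F^{(\text{affine in }n)}$ is of the form required by the definition of a bi-iteration family.
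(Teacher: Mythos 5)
Your proposal is correct and follows essentially the same route as the paper: compute the consecutive difference $\zeta(n)-\zeta(n-1)$, observe it is affine in $n$, and read off $(H,F,L)$ for a bi-iteration family directly from the definition, with part~(2) obtained by the same reindexing trick (choosing $r$ so that the shifted quadratic and its consecutive differences are nonnegative). The only cosmetic deviation is that you place the extra factor of $F$ in the head operation rather than in $L$ for $G_{n^2}$ (i.e.\ $(F,F^2,id)$ vs.\ the paper's $(id,F^2,F)$), which is immaterial since powers of $F$ commute, and you also needn't invoke Lemma~\ref{lem:bi-from-i}, as you yourself note.
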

\begin{proof}
Let $F$ be an elementary operation such that $G_{n}:\, n\in\mathbb{N}$
is an $F$-iteration family. 
\begin{enumerate}
\item $G_{\binom{n}{2}}:\, n\in\mathbb{N}$ is an $(id,F,id)$-bi-iteration
family. The proof is by induction on $n$ with $G_{\binom{0}{2}}=G_{0}$
and 
\[
id\left(F^{n}\left(id\left(G_{\binom{n}{2}}\right)\right)\right)=F^{n+\binom{n}{2}}\left(G_{0}\right)=F^{\binom{n+1}{2}}\left(G_{0}\right)=G_{\binom{n+1}{2}}\,.
\]
$G_{n^{2}}:\, n\in\mathbb{N}$ is an $(id,F^{2},F)$-bi-iteration family.
Again by induction with $G_{0^{2}}=G_{0}$ and 
\[
id\left(F^{2n}\left(F(G_{n^{2}})\right)\right)=F^{2n+1+n^{2}}\left(G_{0}\right)=F^{(n+1)^{2}}\left(G_{0}\right)=G_{(n+1)^{2}}\,.
\]

\item Since $c>0$, there exists $r\in\mathbb{N}$ such that $c(n+r)^{2}+d(n+r)+e=cn^{2}+d'n+e'$
and $d',e'\geq0$. Let $H_{0}=G_{e'}$ , then $H_{n}:\, n\in\mathbb{N}$
is an $(id,F^{2c'},F^{d'+1})$-bi-iteration family. Here again the
proof is by induction on $n$. 
\end{enumerate}
\end{proof}

\subsection{Families which are not bi-iterative}
 {\em Clique-width} is a graph parameter
which generalizes tree-width, and is very useful for designing efficient
algorithms for NP-hard problems, see e.g. \cite{ar:CourcelleOlariu2000,ar:OumSeymour2006}. 

\begin{defn}
The clique-width $cwd(G)$ 
of a graph $G$ is the minimal $k\in\mathbb{N}$ such that
there exists  a $k$-graph $H$
whose underlying graph is isomorphic to $G$ and which can be 
obtained from $\emptyset$ by applying the basic operations $Add_i$, $\rho_{i\to j}$, $\eta_{i,j}$
and $\delta_{i,j}$ from Definition \ref{def:basic}. 
\end{defn}

Bi-iterative families have bounded clique-width. Using this fact we easily get examples of 
families which are not bi-iterative.

\begin{lem}
If $G_{n}:\: n\in\mathbb{N}$ is a bi-iterative family of $k$-graphs, then
for every $n$, $G_n$ has clique-width at most $k$.
\end{lem}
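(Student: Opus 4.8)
**Proof proposal for: If $G_n : n \in \mathbb{N}$ is a bi-iterative family of $k$-graphs, then $cwd(G_n) \le k$ for every $n$.**

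The plan is to show that every graph appearing in a bi-iterative family can be built from the empty graph $\emptyset$ using only the clique-width operations $Add_i$, $\rho_{i\to j}$, $\eta_{i,j}$, $\delta_{i,j}$, without ever exceeding $k$ labels, so that the definition of $cwd$ immediately yields the bound. First I would observe that the basic operations of Definition \ref{def:basic} are, up to $\eta^b_{i,j}$, literally the clique-width operations: $Add_i$, $\rho_{i\to j}$, $\eta_{i,j}$ and $\delta_{i,j}$ are all permitted in a clique-width expression, and the restricted edge-addition $\eta^b_{i,j}$ is, on any fixed input $k$-graph, equal either to $\eta_{i,j}$ applied to that graph or to the identity; in either case it can be simulated by a clique-width operation (or by doing nothing). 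Hence every elementary operation $F$, $H$, $L$ on $k$-graphs is a finite composition of clique-width operations on $k$-graphs.

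Next I would set up an induction. Let $G_{n+1} = H(F^n(L(G_n)))$ with $G_0$ a fixed $k$-graph. For the base case, $G_0$ itself is a $k$-graph; it is convenient (as the text notes after the definition of bi-iterative families) to assume $G_0$ is already described by applying basic operations to $\emptyset$, so $G_0$ has a clique-width expression using at most $k$ labels, giving $cwd(G_0) \le k$. For the inductive step, suppose $G_n$ is realized as the underlying graph of a $k$-graph obtained from $\emptyset$ by a clique-width expression with at most $k$ labels. Then $L(G_n)$, being $L$ applied to that $k$-graph, is obtained by appending the clique-width operations comprising $L$; likewise $F^n$ appends $n$ copies of the operations comprising $F$, and $H$ appends the operations comprising $H$. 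Throughout, we only ever manipulate $k$-graphs — the label set stays $\{1,\dots,k\}$ — so the resulting expression for $G_{n+1}$ still uses at most $k$ labels. Therefore $cwd(G_{n+1}) \le k$. Finally, since the underlying graph family $G_n$ (after ignoring labels) is obtained from a bi-iterative family of $k$-graphs, and clique-width depends only on the underlying graph, the bound transfers.

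The only genuinely delicate point is the treatment of $\eta^b_{i,j}$: it is not a single fixed clique-width operation but a case distinction whose outcome depends on the sizes $|R_i|, |R_j|$ in the particular $k$-graph it is applied to. The resolution is that in a clique-width \emph{expression} we are building one concrete graph at a time, so for each application of $\eta^b_{i,j}$ in the construction of a specific $G_n$ the sizes are determined, and we simply substitute whichever of $\eta_{i,j}$ or $id$ is correct — no operation outside the clique-width repertoire is ever needed, and the label count is unaffected. I expect this bookkeeping, together with stating the induction cleanly, to be the main (and only real) obstacle; everything else is a direct unwinding of the definitions.
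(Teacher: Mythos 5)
Your proposal is correct and follows essentially the same route as the paper's own proof: you observe that the basic operations other than $\eta^b_{i,j}$ are exactly the clique-width operations, handle $\eta^b_{i,j}$ by replacing it case-by-case with $\eta_{i,j}$ or the identity on each concrete graph, and note that the construction never leaves the label set $\{1,\dots,k\}$. The paper phrases this without the explicit induction you set up, but the underlying argument is the same.
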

\begin{proof}
Let $G_n$ be a $(H,F,L)$-bi-iteration family of $k$-graphs. Since $G_0$ is a $k$-graph, 
it can be expressed by the basic operations $Add_i$, $\rho_{i\to j}$, and $\eta_{i,j}$ on $\emptyset$.
For every $n>0$, $G_n$ is a composition of the operations $H$, $F$ and $L$, which are in turn
compositions of basic operations. 
Therefore, for every $n$, $G_n$ can be obtained from $\emptyset$ by applying operations of the form
$Add_i$, $\rho_{i\to j}$, $\eta_{i,j}$, $\delta_{i,j}$, and $\eta_{i,j}^b$. 
It remains to notice that whenever an operation $\eta_{i,j}^b$ is applied to a $k$-graph $G'$, it can be either 
replaced by $\eta_{i,j}$ or omitted, depending on whether the number of vertices in $G'$ labeled $i$ or $j$ is smaller or equal to $b$ or not. 
Therefore, for every $n$, $G_n$ can be obtained from $\emptyset$ by applying operations of the form
$Add_i$, $\rho_{i\to j}$, $\eta_{i,j}$ and $\delta_{i,j}$ (but no operations of the form $\eta_{i,j}^b$). 
Therefore, each $G_n$ is of clique-width as most $k$. 
\end{proof}

Graph families which have unbounded clique-width, like square grids and other lattice graphs, 
are not bi-iterative. It is instructive to compare the graphs in Figure \ref{fig:non-bi-iterative} with the graphs of Figure \ref{fig:bi-families-bounded}. 

\begin{figure}
\begin{center}
\begin{tabular}{cc}
\includegraphics[angle=90,scale=0.8]{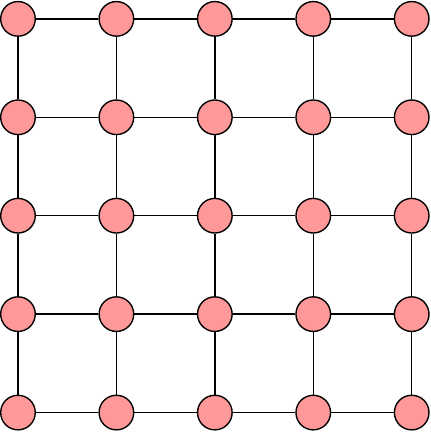}~~~ & ~~~\includegraphics[scale=0.8]{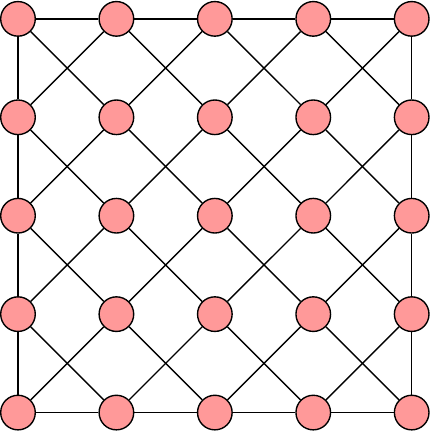}\tabularnewline
\end{tabular}
\end{center}
\caption{\label{fig:non-bi-iterative} Examples of graphs belonging to two families which are not bi-iterative, because they have unbounded clique-width.  }
\end{figure}

\section{Graph polynomials and \MSOL \label{se:msol}}

We consider in this paper two related rich families of graph polynomials with
useful decomposition properties. These graph polynomials are defined
using a simple logical language on graphs.

\subsection{Monadic Second Order Logic of graphs, \MSOL}

We define the logic \MSOLs of graphs inductively. We have three types
of variables: $x_{i}:\, i\in\mathbb{N}$ which range over vertices,
$U_{i}:\, i\in\mathbb{N}$ which range over sets of vertices and $B_{i}:\, i\in\mathbb{N}$
which range over sets of edges. We assume our graphs are {\em ordered},
i.e. that there exists an order relation $\leq$ on the vertices.
Atomic formulas are of the form $x_{i}=x_{j}$, $(x_{i},x_{j})\in E$,
$x_{i}\leq x_{j}$, $x_{i}\in U_{j}$ and $(x_{i},x_{j})\in B_{\ell}$.
The logical formulas of \MSOLs are built inductively from the atomic
formulas by using the connectives $\lor$ (or), $\land$ (and), $\neg$
(negation) and $\to$ (implication), and the quantifiers $\forall x_{i}$,
$\exists x_{i}$, $\forall U_{i}$, $\exists U_{i}$, $\forall B_{i}$,
$\exists B_{i}$ with their natural interpretation. 

If no variable $B_{i}$ occurs in the formula, then the formula is
said to be in \MSOL${}_G$, \MSOLs on graphs. Otherwise, the formula
is said to be on hypergraphs.%
\footnote{\MSOL$_{G}$ is referred to as node-\MSOLs in \cite{bk:Lovasz2012},
as \MS${}_1$ in \cite{bk:CourcelleEngelfriet2012}, and as \MSOL$(\tau_{graph})$
in \cite{pr:KotekMakowskyCSL12}. Full \MSOLs is sometimes referred
to as \MS${}_2$ or as \MSOL$(\tau_{hypergraph})$. $\tau_{graph}$
and $\tau_{hypergraph}$ are vocabularies whose structures represent
graphs in different ways, the later of which can also be used to represent
hypergraphs. %
} Sometimes additional modular quantifiers are allowed, giving rise
to the extended logic \CMSOL. The counting quantifiers are of the
form $C_{q}x\,\varphi(x)$, whose semantics is that the number of
elements from the universe satisfying $\varphi$ is zero modulo $q$.
On structures containing an order relation, as is the case here, \CMSOLs
and \MSOLs are equivalent, cf. \cite{bk:CourcelleEngelfriet2012}. 
\begin{example}
\label{ex:msol}~
\begin{enumerate}
\item We can express in \MSOLs that a set of edges $B_{1}$ is a matching:
\[
\varphi_{match}(B_{1})=\forall x_{1}\forall x_{2}\forall x_{3}\left((x_{1},x_{2})\in B_{1}\land(x_{2},x_{3})\in B_{1}\to x_{1}=x_{3}\right)
\]

\item We can express in \MSOLs that a set of vertices $U_{1}$ is an independent
set:
\[
\varphi_{ind}(U_{1})=\forall x_{1}\forall x_{2}\left((x_{1},x_{2})\in E\to\left(x_{1}\notin U_{1}\lor x_{2}\notin U_{1}\right)\right)
\]
where write e.g. $x_{1}\notin U_{1}$ as shorthand for $\neg\left(x_{1}\in U_{1}\right)$.
Note $\varphi_{ind}(U_{1})$ is a \MSOL${}_G$ formula.
\item A graphs is $3$-colorable iff it satisfies the following \MSOL${}_G$
formula:
\[
\exists U_{1}\exists U_{2}\exists U_{3}\left(\varphi_{partition}(U_{1},U_{2},U_{3})\land\varphi_{ind}(U_{1})\land\varphi_{ind}(U_{2})\land\varphi_{ind}(U_{3})\right)
\]
where $\varphi_{partition}$ expresses that $U_{1},U_{2},U_{3}$ form
a partition of the vertices: 
\begin{eqnarray*}
\varphi_{partition}(U_{1},U_{2},U_{3}) & = & \forall x_{1}\left(x_{1}\in U_{1}\lor x_{1}\in U_{2}\lor x_{1}\in U_{3}\right)\land\\
 &  & \forall x_{1}\neg\left(x_{1}\in U_{1}\land x_{1}\in U_{2}\right)\land\\
 &  & \forall x_{1}\neg\left(x_{1}\in U_{2}\land x_{1}\in U_{3}\right)\land\\
 &  & \forall x_{1}\neg\left(x_{1}\in U_{1}\land x_{1}\in U_{3}\right)
\end{eqnarray*}

\item We can express in \MSOLs that a vertex $x_{1}$ is the first element
is its connected component in the graph spanned by $B_{1}$ with respect
to the ordering of the vertices:
\[
\varphi_{fconn}(x_{1},B_{1})=\forall x_{2}\left(\varphi_{sc}(x_{1},x_{2})\to x_{1}\leq x_{2}\right)
\]
 where $\varphi_{sc}(x_{1},x_{2})$ says that $x_{1}$ and $x_{2}$
belong to the same connected component in the graph spanned by $B_{1}$:

\begin{center}$\begin{array}{rccl}
\varphi_{sc}(x_{1},x_{2},B_{1}) & = & \forall U_{1} & \Bigg(\bigg(x_{1}\in U_{1}\land x_{2}\notin U_{1}\bigg)\to\\
 &  &  & \exists x_{3}\exists x_{4}\left(B_{1}(x_{3},x_{4})\land x_{3}\in U_{1}\land x_{2}\notin U_{1}\right)\Bigg)
\end{array}$\end{center}The formula $\varphi_{fconn}(x_{1},B_{1})$ will be useful
when we discuss the definability of the Tutte polynomial. 

\end{enumerate}

\subsection{\MSOL-polynomials}

\MSOL-polynomials are a class of inductively defined graph polynomials
given e.g. in \cite{pr:GodlinKotekMakowsky08}. It is convenient to
refer to them in the following normal form:
\[
p=\sum_{U_{1},\ldots,U_{\ell},B_{1},\ldots,B_{m}:\Phi(\bar{U},\bar{B})}\,\, X_{1}^{\left|U_{1}\right|}\cdots X_{\ell'}^{\left|U_{\ell'}\right|}X_{\ell'+1}^{\left|B_{1}\right|}\cdots X_{\ell'+m'}^{\left|B_{m'}\right|}
\]
where $\Phi$ is an \MSOLs formula with the iteration variables indicated
and $\ell'\leq\ell$, $m'\leq m$. $\bar{U},\bar{B}$ is short for
$U_{1},\ldots,U_{\ell},B_{1},\ldots,B_{m}$. If $m=0$ and all the
formulas are \MSOL${}_G$ formulas, then we say $p$ is a \MSOL${}_G$-polynomial.
It is often convenient to think of the indeterminates $X_{i}$ as
\emph{multiplicative weights} of vertices and edges. 

While every \MSOL${}_G$-polynomial is a \MSOL-polynomial, the converse
is not true. The independence polynomial, the interlace polynomial
\cite{ar:Courcelle2008}, the domination polynomial and the vertex
cover polynomial are \MSOL$_G$-polynomials. The Tutte polynomial,
the matching polynomial, the characteristic polynomial and the edge
cover polynomial are \MSOL$_{HG}$. We illustrate this for the independence
polynomial and the Tutte polynomial. 
\end{example}

\subsection{The independence polynomial}

The independence polynomial is the generating function of independent
sets, 
\[
I(G)=\sum_{j=0}^{n}ind_{G}(j)X^{j}\,,
\]
where $ind_{G}(j)$ is the number of independent sets of size $j$
and $n$ is the number of vertices in $G$. It is a \MSOL${}_G$-polynomial,
given by 
\[
I(G)=\sum_{U_{1}:\Phi_{ind}(U_{1})}\,\, X^{|U_{1}|}
\]
where $\Phi_{ind}=\varphi_{ind}$ from Example \ref{ex:msol} says
$U_{1}$ is an independent set.

\subsection{The Tutte polynomial and the chromatic polynomial}

The chromatic polynomial is defined in terms of counting proper colorings,
but it can be written as a subset expansion which resembles an \MSOL-polynomial
as follows:
\begin{gather}
\chi(G)=\sum_{A\subseteq E}(-1)^{|A|}X^{k(A)}\label{eq:dichrom}
\end{gather}
where $k(A)$ is the number of connected components in the spanning
subgraph of $G$ with edge set $A$. 

Therefore, $\chi(G)$ is an evaluation of the dichromatic polynomial
given by 
\[
Z(G)=\sum_{A\subseteq E}Y^{|A|}X^{k(A)}
\]
 which is an \MSOL-polynomial:
\[
Z(G)=\sum_{U_{1},B_{1}:\Phi_{1}}\,\, Y^{|B_{1}|}X^{|U_{1}|}
\]
with $\Phi_{1}$ says that $U_{1}$ is the set of vertices which are
minimal in their connected component in the graph $(V,B_{1})$ with
respect to the ordering on the vertices
\[
\Phi_{1}=\forall x\left(x\in U_{1}\leftrightarrow\varphi_{fconn}(x_{1},B_{1})\right),
\]
where $\varphi_{fconn}$ is from Example \ref{ex:msol}. The dichromatic
polynomial is related to the Tutte polynomial via the following relation:
\[
T(G,X,Y)=\frac{Z(G,(X-1)(Y-1),Y-1)}{(X-1)^{k(E)}(Y-1)^{|V|}}\,.
\]
 The Tutte polynomial can also be shown to be an \MSOL-polynomial
via its definition in terms of spanning trees.

\subsection{A Feferman-Vaught-type theorem for \MSOL-polynomials}

The main technical tool from model theory that we use in this paper
is a decomposition property for \MSOL-polynomials, which resembles
decomposition theorems for formulas of First Order Logic, \FOL, and
\MSOL. For an extensive survey of the history and uses of Feferman-Vaught-type
theorems, including to \MSOL-polynomials, see \cite{ar:MakowskyTARSKI}.

In Theorem \ref{th:fv-poly} we rephrase Theorem 6.4 of
\cite{ar:MakowskyTARSKI}. For simplicity, we do not introduce the
general machinery that is used there, e.g. instead of the notion of
{\em\MSOL-smoothness of binary operations} we limit ourselves to
our elementary operations (see Section 4 of \cite{ar:MakowskyTARSKI}
for more details). Some other small differences follow from the proof
of Theorem 6.4.
\begin{thm}
[\cite{ar:MakowskyTARSKI}, see also \cite{pr:FischerMakowsky08}] \label{th:fv-poly}
Let $k$ be a natural number. Let $P$ be a finite set of \MSOL-polynomials.
Then there exists a finite set of \MSOL-polynomials $P'=\{p_{0},\ldots,p_{\alpha}\}$
such that $P\subseteq P'$ and for every elementary operation $\sigma$
on $k$-graphs, the following holds. If either all members of $P$
are \MSOL${}_G$-polynomials, or $\sigma$ consists only of bounded
basic operations, then there exists a matrix $M_{\sigma}$ such that
for every graph $G$, 
\[
\left(p_{0}(\sigma(G),\bar{X}),\ldots,p_{\alpha}(\sigma(G),\bar{X})\right)^{tr}=M_{\sigma}\left(p_{0}(G,\bar{X}),\ldots,p_{\alpha}(G,\bar{X})\right)^{tr}
\]
$M_{\sigma}$ is a matrix of size $\alpha\times\alpha$ of polynomials
with indeterminates $\bar{X}$. Additionally, if all members of $P$
are \MSOL$_G$-polynomials, then the same is true for $P'$. 
\end{thm}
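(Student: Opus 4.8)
Since Theorem~\ref{th:fv-poly} merely rephrases a known Feferman--Vaught-type decomposition, the plan is to run the classical Feferman--Vaught / Hintikka argument while tracking its effect on the polynomial layer. First I would reduce to the \emph{basic} operations of Definition~\ref{def:basic}: every elementary $\sigma$ is a composition $\tau_r\circ\cdots\circ\tau_1$ of basic operations, and a linear rule $\bar{p}(\tau(G))=M_\tau\,\bar{p}(G)$, where $\bar{p}$ denotes the column vector of the members of $P'$, composes to $\bar{p}(\sigma(G))=M_{\tau_r}\cdots M_{\tau_1}\,\bar{p}(G)$. Hence it suffices to build one finite set $P'\supseteq P$ together with such matrices for the basic operations, \emph{provided $P'$ is simultaneously closed under all basic operations admissible under the hypothesis} (all of them in the \MSOL${}_G$ case, the bounded ones otherwise); the entries of each $M_\tau$ will be polynomials in $\bar{X}$ of bounded degree, so the same holds for each finite product $M_\sigma$.

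For the individual basic operations I would argue as follows. The operation $Add_i$ is disjoint union with the fixed one-vertex $k$-graph $K_1^{(i)}$, so the classical Feferman--Vaught theorem for disjoint unions applies: for an \MSOL-formula $\Phi$ of quantifier rank $q$ with iteration variables among $\bar{U},\bar{B}$, there are finitely many pairs $(\psi_j,\chi_j)$ of \MSOL-formulas of rank $\le q$ with the same iteration variables such that $\Phi$ holds of an assignment on $G\sqcup K_1^{(i)}$ iff its two restrictions satisfy some $(\psi_j,\chi_j)$; splitting the defining sum of the \MSOL-polynomial $p$ given by $\Phi$ along this disjunction yields $p(Add_i(G))=\sum_j p_{\psi_j}(G)\cdot p_{\chi_j}(K_1^{(i)})$, and each $p_{\chi_j}(K_1^{(i)})$ is a polynomial in $\bar{X}$ of degree at most $1$ in each indeterminate. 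The operations $\rho_{i\to j}$, $\eta_{i,j}$ and $\delta_{i,j}$ leave the vertex set fixed and alter the labels and the edge relation by a fixed quantifier-free substitution which one plugs into $\Phi$; for $\rho_{i\to j}$ this is rank-preserving and harmless for any $\Phi$, and for $\eta_{i,j}$ it is rank-preserving \emph{provided $\Phi$ has no edge-set variables}, since otherwise a variable $B_\ell$ ranging over subsets of $E(\eta_{i,j}(G))$ would range over an unboundedly larger set than over subsets of $E(G)$ --- this is exactly why unbounded $\eta_{i,j}$ is admitted only when all polynomials are \MSOL${}_G$-polynomials. For the edge-shrinking $\delta_{i,j}$ one re-indexes every sum over $B_\ell\subseteq E(\delta_{i,j}(G))$ as a sum over $B_\ell\subseteq E(G)$ carrying the extra clause that $B_\ell$ contains no edge joining $R_i$ to $R_j$, and $\eta_{i,j}^b$ is treated by a first-order case split on whether $|R_i\cup R_j|\le b$: the negative branch is the identity, and the positive branch adds at most $b^2$ edges, whose membership in each $B_\ell$ I hard-code. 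In these last two cases $p(\tau(G))$ becomes a finite $\bar{X}$-linear combination of \MSOL-polynomials on $G$, at a quantifier-rank cost bounded in terms of $k$ and $\tau$.

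Having these rules, I would close the construction: let $q^{\ast}$ bound the quantifier ranks and $\ell^{\ast},m^{\ast}$ the numbers of iteration variables of all \MSOL-polynomials obtained from $P$ by one application of the rules above. Over the fixed finite vocabulary of $k$-graphs there are, up to logical equivalence, only finitely many \MSOL-formulas of rank $\le q^{\ast}$ in at most $\ell^{\ast}+m^{\ast}$ free set variables, so taking $P'$ to consist of one \MSOL-polynomial per such formula, with indeterminates among $X_1,\dots,X_{\ell^{\ast}+m^{\ast}}$, gives a finite set with $P\subseteq P'$ under which each admissible basic operation, and hence each admissible $\sigma$, acts by a matrix over polynomials in $\bar{X}$. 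The final assertion of the theorem is then immediate, since no rule introduces an edge-set variable: if every member of $P$ lies in \MSOL${}_G$, then so does every member of $P'$.

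The step I expect to be the main obstacle is precisely this closure --- producing one finite $P'$ good for \emph{all} admissible $\sigma$ (and in particular stable under the iterations $F^n$ that occur in bi-iterative families), rather than a $\sigma$-dependent set whose rank grows with the length of $\sigma$. The decomposition for a single basic operation is routine bookkeeping, but the changes of the edge universe caused by $\delta_{i,j}$ and $\eta_{i,j}^b$ must be absorbed without cumulative growth of the quantifier rank, and this is exactly what the two alternatives in the hypothesis --- all polynomials in \MSOL${}_G$, or $\sigma$ built only from bounded basic operations --- are designed to guarantee, since then the edge set is either never touched (there are no $B_\ell$'s at all) or touched only by a bounded, explicitly enumerable collection of edges.
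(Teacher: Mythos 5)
The paper does not prove Theorem~\ref{th:fv-poly}: it is imported verbatim (``rephrase Theorem 6.4 of \cite{ar:MakowskyTARSKI}'') from Makowsky's Feferman--Vaught survey and from \cite{pr:FischerMakowsky08}, so there is no in-paper proof to compare against. What follows is a critique of your attempt on its own merits, read against what is known about the cited results.

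Your overall strategy --- reduce to basic operations, give a one-step transition matrix for each, and then close a finite set of \MSOL-polynomials by appealing to finiteness of formulas of bounded quantifier rank --- is exactly the classical route, and your case analysis for the basic operations is sound: $Add_i$ is disjoint union with a constant one-vertex structure handled by the standard Feferman--Vaught splitting; $\rho_{i\to j}$, $\eta_{i,j}$, $\delta_{i,j}$ are quantifier-free interpretations and back-translate a rank-$q$ formula to a rank-$q$ formula, with the re-indexing for $\delta_{i,j}$ done exactly as you describe; the dichotomy \MSOL${}_G$ versus bounded operations is correctly diagnosed as being forced by edge-set variables ranging over an unboundedly enlarged edge universe under unbounded $\eta_{i,j}$ (the example $p(G)=\sum_{B_1\subseteq E}Y^{|B_1|}=(1+Y)^{|E|}$ shows the failure concretely).

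The one genuine soft spot is precisely the closure step you flag as the obstacle, and your proposed fix does not quite resolve it. You set $q^{\ast}$ to be the maximum rank after \emph{one} application of the translation rules and then take $P'$ to be all polynomials of rank $\le q^{\ast}$. But the translation rule for $\eta_{i,j}^{b}$ (which you reduce to a first-order case split on $|R_i\cup R_j|\le b$ plus, in the full \MSOL case, an enumeration over the at most $b^2$ possibly-added edges using about $b$ first-order quantifiers) \emph{does} increase the quantifier rank; so a second application of some $\eta^{b'}$ to a member of $P'$ may leave $P'$. The mechanism that actually closes the construction --- and which the paper alludes to under the name \emph{\MSOL-smoothness of operations} --- is stronger than ``back-translate and stay below $q^{\ast}$'': one fixes $Q$ large enough (in particular $Q\ge b+1$ for every $\eta^b$ occurring) so that the rank-$Q$ Hintikka type of $\tau(G)$ is a \emph{function} of the rank-$Q$ type of $G$ for every admissible basic $\tau$; then $P'$ is the set of Hintikka polynomials at rank $Q$ and each $\tau$ (hence each finite composition $\sigma$, including $F^n$) literally permutes the weight vector by a fixed matrix, with no accumulation. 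Your write-up stops short of this fixed-$Q$-smoothness argument.

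Finally, a remark you may want to make explicit: since $\eta^b$ is permitted for every $b\in\mathbb{N}$, the rank $Q$ above necessarily depends on the bound $b$ occurring in $\sigma$, so strictly speaking the finite set $P'$ must be allowed to depend on $\sigma$ (or at least on a bound for the $b$'s occurring in it) in the bounded-operations branch of the hypothesis; this is harmless in every use the paper makes of the theorem (where $H,F,L$ are fixed before $P'$ is chosen, as in Lemma~\ref{lem:fv-bi-iterative}), but it is worth stating so that the quantifier order in the theorem is not misread.
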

For bi-iterative families of graphs we prove the following result,
which we will use in the proof of our main theorem. 
\begin{lem}
\label{lem:fv-bi-iterative} Let $k$ be a natural number. Let $p$
be an \MSOL-polynomial and let $G_{n}:\, n\in\mathbb{N}$ be a bi-iterative
graph family. If $p$ is an \MSOL$_G$-polynomial, or $G_{n}:\, n\in\mathbb{N}$
is bounded, then there exist a finite set of \MSOL-polynomials $P'=\{p_{0},\ldots,p_{\alpha}\}$
and a C-finite sequence $M_{n}:\, n\in\mathbb{N}$ such that $p\in P'$
and 

such that 
\[
\left(p_{0}(G_{n+1},\bar{X}),\ldots,p_{\alpha}(G_{n+1},\bar{X})\right)^{tr}=M_{n}\left(p_{0}(G_{n},\bar{X}),\ldots,p_{\alpha}(G_{n},\bar{X})\right)^{tr}
\]
Additionally, if $p$ is an \MSOL$_G$-polynomial, then the same is
true for all members of $P'$.
\end{lem}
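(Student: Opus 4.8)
The plan is to unwind the bi-iterative construction $G_{n+1} = H(F^n(L(G_n)))$ one layer at a time, applying Theorem \ref{th:fv-poly} to each of the three elementary operations involved, and to handle the iterated block $F^n$ by using the matrix power result of Lemma \ref{lem:power-of-mat}. First I would apply Theorem \ref{th:fv-poly} to the finite set $P = \{p\}$, obtaining a finite superset $P' = \{p_0,\dots,p_\alpha\}$ of \MSOL-polynomials and, for each of the elementary operations $H$, $F$, $L$, a transition matrix $M_H$, $M_F$, $M_L$ of polynomials in $\bar X$ such that applying the operation to any graph $G$ multiplies the vector $\bar p(G) = (p_0(G,\bar X),\dots,p_\alpha(G,\bar X))^{tr}$ by the corresponding matrix. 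Here I must check the hypothesis of Theorem \ref{th:fv-poly}: if $p$ is an \MSOL$_G$-polynomial the theorem applies to arbitrary elementary operations (and keeps $P'$ inside \MSOL$_G$), while if $G_n$ is bounded then $H$, $F$, $L$ consist only of bounded basic operations, which is exactly the alternative hypothesis of the theorem. (One has to note $F^n$ is again a composition of the same bounded basic operations, so boundedness is preserved under iteration.)

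Next I would compose. Since $F^n$ is the $n$-fold composition of the elementary operation $F$, iterating the transition relation gives $\bar p(F^n(G)) = M_F^n\, \bar p(G)$ for every graph $G$. Combining the three layers yields
\[
\bar p(G_{n+1}) = M_H \, M_F^{\,n} \, M_L \, \bar p(G_n)\,.
\]
Thus the desired matrix is $M_n = M_H M_F^{\,n} M_L$. It remains to show $M_n$ is a C-finite matrix sequence. The matrices $M_H$, $M_L$ are constant (independent of $n$), so by Lemma \ref{lem:power-of-mat} applied with $c=1$, $d=0$ to the matrix $M_F$, the sequence $M_F^{\,n}$ is a C-finite matrix sequence; then by two applications of Lemma \ref{lem:c-finiteness-matrices-product} (multiplying on the left by the constant — hence trivially C-finite — matrix $M_H$ and on the right by $M_L$) the product $M_n = M_H M_F^{\,n} M_L$ is a C-finite matrix sequence. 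Finally, the last sentence of the lemma follows because if $p$ is an \MSOL$_G$-polynomial then the "additionally" clause of Theorem \ref{th:fv-poly} guarantees every $p_i \in P'$ is an \MSOL$_G$-polynomial.

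The main obstacle is a bookkeeping one rather than a deep one: making sure the single finite set $P'$ produced by Theorem \ref{th:fv-poly} is simultaneously closed under all of $H$, $F$, and $L$ — i.e., that one invokes the Feferman–Vaught theorem once for the finite family $P = \{p\}$ and gets matrices for all three operations on the \emph{same} vector of polynomials, rather than getting three incompatible superset families. Since Theorem \ref{th:fv-poly} is stated exactly in this form (a finite $P'$ that works uniformly for \emph{every} elementary operation $\sigma$ on $k$-graphs), this is immediate, but it is the point that has to be stated carefully. A secondary, genuinely minor point is verifying that the boundedness hypothesis transfers to $F^n$, which holds because the basic operations appearing in $F^n$ are precisely those appearing in $F$.
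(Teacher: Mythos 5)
Your proof is correct and follows essentially the same route as the paper: apply Theorem \ref{th:fv-poly} once to $P=\{p\}$, obtain transition matrices $M_L$, $M_F$, $M_H$ on a single common polynomial vector $\bar p$, compose to get $M_n = M_H M_F^n M_L$, and conclude C-finiteness of $M_n$ via Lemma \ref{lem:power-of-mat} and Lemma \ref{lem:c-finiteness-matrices-product}. Your added remarks on checking the hypothesis of Theorem \ref{th:fv-poly} (in particular that boundedness is preserved under iterating $F$) are points the paper passes over silently, so they are a welcome bit of extra care rather than a deviation.
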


\begin{proof}
Let $F$, $H$ and $L$ be elementary operations such that $G_{n+1}=H\left(F^{n}(L(G_{n}))\right)$.
Let $P'=\{p_{0},\ldots,p_{\alpha}\}$ be the set of \MSOL-polynomials
guaranteed in Theorem \ref{th:fv-poly} for $P=\{p\}$. We have 
\[
\left(p_{0}(\sigma(G),\bar{X}),\ldots,p_{\alpha}(\sigma(G),\bar{X})\right)^{tr}=M_{\sigma}\left(p_{0}(G,\bar{X}),\ldots,p_{\alpha}(G,\bar{X})\right)^{tr}
\]
for $\sigma\in\left\{ L,F,H\right\} $. Therefore, 
\[
\left(p_{0}(G_{n+1},\bar{X}),\ldots,p_{\alpha}(G_{n+1},\bar{X})\right)^{tr}=M_{H}M_{F}^{n}M_{L}\left(p_{0}(G,\bar{X}),\ldots,p_{\alpha}(G,\bar{X})\right)^{tr}\,.
\]
 By Lemmas \ref{lem:power-of-mat} and \ref{lem:c-finiteness-matrices-product},
$A_{n}=M_{H}M_{F}^{n}M_{L}$ is a C-finite sequence of matrices. 
\end{proof}

\section{Statement and proof of Theorem \ref{mainth:1} \label{se:mainth:1-proof}}

We are now ready to state Theorem \ref{mainth:1} exactly and prove
it. 
\begin{thm}
\label{th:main-formal} Let $k$ be a natural number. Let $p$ be
an \MSOL-polynomial and let $G_{n}:\, n\in\mathbb{N}$ be a bi-iterative
graph family. If $p$ is an \MSOL$_G$-polynomial, or $G_{n}:\, n\in\mathbb{N}$
is bounded, then the sequence $p(G_{n}):\, n\in\mathbb{N}$
is \Ctem-finite. 
\end{thm}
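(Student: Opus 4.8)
The plan is to combine the Feferman–Vaught-type decomposition for bi-iterative families (Lemma \ref{lem:fv-bi-iterative}) with the matrix-to-recurrence machinery developed in Section \ref{se:mainth:2-proof} (Lemma \ref{th:matrix-to-rec-numbers}). First I would apply Lemma \ref{lem:fv-bi-iterative} to the \MSOL-polynomial $p$ and the bi-iterative family $G_{n}$: under the stated hypotheses (either $p$ is an \MSOL$_G$-polynomial, or $G_{n}$ is bounded) it yields a finite set $P'=\{p_{0},\ldots,p_{\alpha}\}$ with $p\in P'$ and a C-finite matrix sequence $M_{n}$ so that, writing $\overline{v_{n}}=\left(p_{0}(G_{n},\bar{X}),\ldots,p_{\alpha}(G_{n},\bar{X})\right)^{tr}$, we have $\overline{v_{n+1}}=M_{n}\overline{v_{n}}$.

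The second step is to feed this linear recursion into Lemma \ref{th:matrix-to-rec-numbers}. The entries of $M_{n}$ are polynomials in the fixed indeterminates $\bar{X}$, so after substituting concrete rational (or complex) values for $\bar{X}$ we obtain a sequence of scalar matrices over a subfield $\mathbb{F}$ of $\mathbb{C}$; the weight sequence $w_{n}$ in Lemma \ref{th:matrix-to-rec-numbers} is simply taken to be the constant $1$ (which is trivially C-finite and positive). Lemma \ref{th:matrix-to-rec-numbers} then gives that each coordinate $\overline{v_{n}}[j]$, and in particular the coordinate corresponding to $p$ itself, is \Ctem-finite as a sequence of numbers. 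Since the recurrence produced by Lemma \ref{th:matrix-to-rec-numbers} has the same shape regardless of which coordinate we look at, and since $p(G_{n},\bar{X})$ is one such coordinate, the conclusion $p(G_{n}):\,n\in\mathbb{N}$ is \Ctem-finite follows.

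The one genuinely delicate point — and where I expect the main subtlety to lie — is the coefficient-versus-number distinction: $p(G_{n})$ is a priori a \emph{polynomial} in $\bar{X}$, and we want a \Ctem-finite recurrence for the polynomial sequence, not merely for each numerical evaluation. I would handle this by observing that the degrees of $p_{i}(G_{n},\bar{X})$ in $\bar{X}$ are bounded (the basic operations add a bounded number of vertices/edges per step in the bounded case, and in the \MSOL$_G$ case one works over a finite set of polynomials whose relevant degrees stay controlled), so one may regard each $p_{i}(G_{n})$ as a fixed-length tuple of its coefficients, enlarge the vector $\overline{v_{n}}$ accordingly, and note that $M_{n}$ acts C-finitely on this enlarged coefficient vector. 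Running Lemma \ref{th:matrix-to-rec-numbers} over $\mathbb{F}=\mathbb{Q}$ (or the appropriate coefficient field) on this enlarged system then yields a single \Ctem-recurrence whose coefficients are C-finite \emph{number} sequences and which is satisfied coordinatewise; collecting coordinates back into polynomials gives a \Ctem-recurrence for $p(G_{n})$ with C-finite \emph{polynomial} coefficients, as desired. Once this bookkeeping is set up, the rest is a direct citation of the two lemmas.
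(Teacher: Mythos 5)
Your first step matches the paper exactly: apply Lemma~\ref{lem:fv-bi-iterative} to obtain $P'=\{p_{0},\ldots,p_{\alpha}\}$ and a C-finite matrix sequence $M_{n}$ with $\overline{v_{n+1}}=M_{n}\overline{v_{n}}$, where $\overline{v_{n}}=\left(p_{0}(G_{n},\bar{X}),\ldots,p_{\alpha}(G_{n},\bar{X})\right)^{tr}$. The gap is in how you pass from this polynomial-valued recursion to a \Ctem-recurrence for the polynomial sequence $p(G_{n},\bar{X})$.

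You propose to treat each $p_{i}(G_{n},\bar{X})$ as a fixed-length tuple of its coefficients, on the grounds that its degree in $\bar{X}$ stays bounded. That premise is false. For a bi-iterative family the number of vertices grows (typically quadratically, since $F$ is applied $n$ times at step $n$), and the degree of a graph polynomial in its indeterminates tracks the size of the graph: the degree of the independence polynomial is the independence number, and the degrees of the dichromatic/Tutte polynomial are governed by the number of vertices, edges, and connected components. So there is no uniform coefficient-tuple length, and no fixed enlarged vector $\overline{v_{n}}$ on which $M_{n}$ could act C-finitely. Your plan to "substitute concrete rational values for $\bar{X}$" would only yield a recurrence for a single numerical evaluation, not for the polynomial sequence.

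The paper avoids this with a genericity argument rather than a bounded-degree argument. Lemma~\ref{lem:injective} produces reals $d_{1},\ldots,d_{k}$, algebraically independent over the (countable) coefficient field, so that the evaluation map $sub_{k}:\mathbb{F}[x_{1},\ldots,x_{k}]\to\mathbb{C}$ given by $sub_{k}(q)=q(d_{1},\ldots,d_{k})$ is \emph{injective}. Substituting these values turns $\overline{v_{n}}$ and $M_{n}$ into a scalar system over the extension field $\mathbb{F}(d_{1},\ldots,d_{k})$; Lemma~\ref{th:matrix-to-rec-numbers} then produces a \Ctem-recurrence with C-finite scalar coefficients. Because those scalar coefficients arise as polynomial expressions in the entries of $M_{n}$, they are exactly the images under $sub_{k}$ of polynomial-coefficient sequences $q_{n}^{\{i\}}(\bar{x})$, and the injectivity of $sub_{k}$ forces the recurrence to hold identically as polynomials. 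This is packaged as Lemma~\ref{lem:polynomials}, which is what the paper cites in the proof of Theorem~\ref{th:main-formal}. Your proposal is missing precisely this lemma (or an equivalent argument); without it, the passage from numbers back to polynomials does not go through, because a single numerical substitution cannot determine a polynomial of unbounded degree.
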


To transfer Theorem \ref{th:main-formal} to C-finite sequences over
a polynomial ring, we will use the following lemma:

\begin{lem}
\label{lem:injective} Let $\mathbb{F}$ be a countable subfield of
$\mathbb{C}$. For every $\xi\in\mathbb{N}$, there exists a set $D_{\xi}=\{d_{1},\ldots,d_{\xi}\}\subseteq\mathbb{R}$
such that the partial function $sub_{\xi}:\mathbb{F}[x_{1},\ldots,x_{\xi}]\to\mathbb{C}$
given by

\[
sub_{\xi}(p)=p(d_{1},\ldots,d_{\xi})
\]
is injective. \end{lem}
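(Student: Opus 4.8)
The plan is to build the set $D_\xi$ greedily, removing one bad hyperplane at a time. Fix $\xi$ and enumerate the nonzero polynomials in $\mathbb{F}[x_1,\dots,x_\xi]$ as $q_1,q_2,\dots$, which is possible since $\mathbb{F}$ is countable and hence so is the polynomial ring. For the map $sub_\xi$ to be injective it suffices that $p(d_1,\dots,d_\xi)\neq 0$ whenever $p$ is a nonzero polynomial, i.e. that $(d_1,\dots,d_\xi)$ avoids the zero set $Z(q_j)=\{\bar{a}\in\mathbb{R}^\xi : q_j(\bar a)=0\}$ of every $q_j$. Each $Z(q_j)$ is a proper algebraic subset of $\mathbb{R}^\xi$ (proper because $q_j$ is not the zero polynomial), hence a closed set with empty interior and, in fact, Lebesgue measure zero in $\mathbb{R}^\xi$.

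First I would record the one nontrivial fact: a nonzero polynomial in $\xi$ real variables has a zero set of Lebesgue measure zero. This follows by induction on $\xi$ — writing $q_j = \sum_i c_i(x_1,\dots,x_{\xi-1}) x_\xi^i$, for any fixed $(a_1,\dots,a_{\xi-1})$ not simultaneously annihilating all the $c_i$ the univariate polynomial $q_j(a_1,\dots,a_{\xi-1},x_\xi)$ has finitely many roots, so by Fubini the measure of $Z(q_j)$ is zero. Then $\bigcup_j Z(q_j)$ is a countable union of measure-zero sets, hence has measure zero, so its complement in $\mathbb{R}^\xi$ is nonempty (indeed of full measure). Picking any point $(d_1,\dots,d_\xi)$ in this complement and setting $D_\xi=\{d_1,\dots,d_\xi\}$ gives the desired set: if $sub_\xi(p)=sub_\xi(p')$ then $(p-p')(d_1,\dots,d_\xi)=0$, and since $(d_1,\dots,d_\xi)$ lies on no zero set of a nonzero polynomial, $p-p'=0$.

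An alternative, entirely elementary route avoids measure theory: choose the $d_i$ one at a time to be algebraically independent over $\mathbb{F}$ — for instance, picking each successive $d_i$ to be a transcendental real number not algebraic over $\mathbb{F}(d_1,\dots,d_{i-1})$, which is possible because the latter field is countable while $\mathbb{R}$ is not. Then by definition of algebraic independence, $p(d_1,\dots,d_\xi)=0$ forces $p$ to be the zero polynomial, giving injectivity of $sub_\xi$ directly. I expect the main (very mild) obstacle to be simply spelling out why the relevant exceptional set is a proper subset of $\mathbb{R}^\xi$ — i.e. that a countable union of proper algebraic sets, or equivalently the algebraic closure of a countable field inside $\mathbb{R}^\xi$, cannot exhaust $\mathbb{R}^\xi$; both the measure-zero argument and the transcendence/cardinality argument dispatch this cleanly, and I would present the transcendence argument since it keeps the paper self-contained.
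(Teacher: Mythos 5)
Your proposal is correct, and the transcendence route you prefer is essentially the paper's own argument. The paper proceeds by induction on $\xi$: having fixed $d_1,\ldots,d_{\xi-1}$, it lets $B_{\xi-1}$ be the (countable) set of reals that are roots of nonzero polynomials over $\mathbb{F}[d_1,\ldots,d_{\xi-1}]$ — i.e.\ the numbers algebraic over $\mathbb{F}(d_1,\ldots,d_{\xi-1})$ — and picks $d_\xi$ outside it; the final case analysis in the paper is just an unpacked version of the standard fact that successively transcendental elements form an algebraically independent tuple. Two small remarks. First, your opening observation — that injectivity of $sub_\xi$ reduces, via $p-q$, to $(d_1,\ldots,d_\xi)$ avoiding the zero set of every nonzero polynomial — is stated more cleanly than in the paper, which instead derives a somewhat roundabout contradiction with injectivity of $sub_{\xi-1}$. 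Second, your measure-theoretic route is a genuine alternative for establishing that the exceptional set is a proper subset of $\mathbb{R}^\xi$: it trades the cardinality argument for a Fubini argument and delivers a slightly stronger conclusion (almost every choice of $(d_1,\ldots,d_\xi)$ works, not merely some choice), though only bare existence is needed here, so the cardinality/transcendence argument you present as primary is the more economical and better matches the paper.
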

\begin{proof}
We prove the claim by induction on $\xi$. For the case $\xi=0$ we
have $D_{\xi}=\emptyset$ and $sub_{\xi}(p)=p$, which is injective. 

Now assume there exists $D_{\xi-1}$ such that $sub_{\xi-1}$ is injective.
Let $B_{\xi-1}$ be the set of real numbers which are roots of non-zero
polynomials in the polynomial ring $\mathbb{F}[d_{1},\ldots,d_{\xi-1}][x_{\xi}]$
of polynomials in the indeterminate $x_{\xi}$ whose coefficients
are polynomials in $d_{1},\ldots,d_{\xi-1}$ with rational coefficients.
The cardinality of $B_{\xi-1}$ is $\aleph_{0}$, implying that that
there exists $d_{\xi}\in\mathbb{R}\backslash B_{\xi-1}$. Let $D_{\xi}=D_{\xi-1}\cup\{d_{\xi}\}$.
Assume for contradiction that there exist distinct $p,q\in\mathbb{Q}[x_{1},\ldots,x_{\xi}]$
such that $sub_{\xi}(p)=sub_{\xi}(q)$. Let $r(x_{1},\ldots,x_{\xi})=p(x_{1},\ldots,x_{\xi})-q(x_{1},\ldots,x_{\xi})$.
Let 
\[
r(x_{1},\ldots,x_{\xi})=\sum_{i_{1},\ldots,i_{\xi}\leq t}\rho_{i_{1},\ldots,i_{\xi}}x_{1}^{i_{1}}\cdots x_{\xi}^{i_{\xi}}\,.
\]
Since $p$ and $q$ are distinct, $r$ is not the zero polynomial
and there exists $i_{\xi}'$ such that 
\[
r_{i_{\xi}'}(x_{1},\ldots,x_{\xi-1})=\sum_{i_{1},\ldots,i_{\xi-1}\leq t}\rho_{i_{1},\ldots,i_{\xi-1},i_{\xi}'}x_{1}^{i_{1}}\cdots x_{\xi-1}^{i_{\xi-1}}
\]
is not identically non-zero. 

By the assumption that $sub_{\xi}(p)=sub_{\xi}(q)$ we have that $r(d_{1},\ldots,d_{\xi})=0$. 
\begin{itemize}
\item If $x_{\xi}$ has non-zero degree in $r(d_{1},\ldots,d_{\xi-1},x_{\xi})$,
then $d_{\xi}$ is indeed a root of a non-zero polynomial $r(d_{1},\ldots,d_{\xi-1},x_{\xi})\in\mathbb{Q}[d_{1},\ldots,d_{\xi-1}][x_{\xi}]$. 
\item Otherwise, $r(d_{1},\ldots,d_{\xi-1},x_{\xi})$ is a polynomial of
degree zero in $x_{\xi}$. In order for $r(d_{1},\ldots,d_{\xi})=0$
to hold, $r(d_{1},\ldots,d_{\xi-1},x_{\xi})$ must be identically
zero. In particular, the coefficient of $x_{\xi}^{i_{\xi}'}$ in $r(d_{1},\ldots,d_{\xi-1},x_{\xi})$
is zero, but this coefficient is $r_{i_{\xi}'}(d_{1},\ldots,d_{\xi-1})$.
This implies that there exist two distinct polynomials, e.g. $r_{i_{\xi}'}(\bar{x})$
and $2r_{i_{\xi}'}(\bar{x})$, which agree on $d_{1},\ldots,d_{\xi-1}$
in contradiction to the assumption that $sub_{\xi-1}$ is injective. 
\end{itemize}
\end{proof}
\begin{lem}
\label{lem:polynomials} Let $\mathbb{F}$ be a subfield of $\mathbb{C}$
and let $r\in\mathbb{N}^{+}$ and let $r\in\mathbb{N}^{+}$. For every
$n\in\mathbb{N}^{+}$, let $\overline{v_{n}}$ be a column vector
of size $r\times1$ of polynomials in $\mathbb{F}[x_{1},\ldots,x_{k}]$.
Let $M_{n}$ be a C-finite sequence of matrices of size $r\times r$
over $\mathbb{F}[x_{1},\ldots,x_{k}]$ such that, for every $n$,
\begin{gather}
\overline{v_{n+1}}=M_{n}\overline{v_{n}}\,.\label{eq:vector-rec-3}
\end{gather}
For each $j=1,\dots,r$, $\overline{v_{n}}[j]$ is \Ctem-finite. Moreover,
all of the $\overline{v_{n}}[j]$ satisfy the same recurrence relation
(possibly with different initial conditions). 
\end{lem}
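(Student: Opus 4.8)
The statement of Lemma~\ref{lem:polynomials} is the exact analogue of Lemma~\ref{th:matrix-to-rec-numbers} with the field $\mathbb{F}$ replaced by the polynomial ring $\mathbb{F}[x_1,\ldots,x_k]$ (and, since no division by a scalar sequence $w_n$ is needed here, the situation is slightly simpler). The plan is to reduce to Lemma~\ref{th:matrix-to-rec-numbers} by means of the injective evaluation map of Lemma~\ref{lem:injective}.

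First I would invoke Lemma~\ref{lem:injective} with $\xi = k$ to obtain a finite set $D_k = \{d_1,\ldots,d_k\} \subseteq \mathbb{R}$ such that the substitution map $sub_k : \mathbb{F}[x_1,\ldots,x_k] \to \mathbb{C}$ sending $p \mapsto p(d_1,\ldots,d_k)$ is injective. (One should note that Lemma~\ref{lem:injective} is stated for a countable subfield $\mathbb{F}$; since $M_n$ and the $\overline{v_n}$ involve only countably many coefficients from $\mathbb{F}$, we may pass to the countable subfield they generate, so there is no loss of generality.) Applying $sub_k$ entrywise to Eq.~(\ref{eq:vector-rec-3}) and writing $\widehat{v_n} = sub_k(\overline{v_n}) \in \mathbb{C}^r$ and $\widehat{M_n} = sub_k(M_n)$, we get $\widehat{v_{n+1}} = \widehat{M_n}\,\widehat{v_n}$ for every $n$. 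Moreover $\widehat{M_n}$ is a C-finite matrix sequence over $\mathbb{C}$: each entry $M_n[i,j]$ is a C-finite sequence over $\mathbb{F}[x_1,\ldots,x_k]$, say satisfying a fixed linear recurrence with coefficients in $\mathbb{F}[x_1,\ldots,x_k]$, and applying the ring homomorphism $sub_k$ to that recurrence shows $\widehat{M_n}[i,j] = sub_k(M_n[i,j])$ is C-finite over $\mathbb{C}$.

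Next I would apply Lemma~\ref{th:matrix-to-rec-numbers} to $\widehat{M_n}$ and $\widehat{v_n}$ (with $w_n \equiv 1$, which is C-finite and always positive). This produces C-finite sequences $q_n^{\{0\}},\ldots,q_n^{\{r^2\}}$ over $\mathbb{C}$, with $q_n^{\{r^2\}}$ nowhere zero, such that for each $j$,
\[
q_n^{\{0\}}\,\widehat{v_n}[j] + \cdots + q_n^{\{r^2-1\}}\,\widehat{v_{n+r^2-1}}[j] = q_n^{\{r^2\}}\,\widehat{v_{n+r^2}}[j]\,.
\]
The key point is that, tracing through the proof of Lemma~\ref{th:matrix-to-rec-numbers}, each $q_n^{\{t\}}$ is built from the entries of products of the $\widehat{M_n}$ and from determinants/cofactors of the associated Gram matrices $N_{n,s}^{*}N_{n,s}$ purely by ring operations (finite sums and products) together with the characteristic-function sequences $\mathbb{I}_{n\equiv i\,(mod\,p)}$, which lie in the prime subring. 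Hence each $q_n^{\{t\}}$ is the image under $sub_k$ of a sequence $\widetilde q_n^{\{t\}}$ over $\mathbb{F}[x_1,\ldots,x_k]$ obtained by the same ring-theoretic recipe applied to the $M_n$; by closure of C-finiteness under finite addition and multiplication, $\widetilde q_n^{\{t\}}$ is C-finite over $\mathbb{F}[x_1,\ldots,x_k]$. Replaying the argument of Lemma~\ref{th:matrix-to-rec-numbers} verbatim over the ring $\mathbb{F}[x_1,\ldots,x_k]$ then yields the polynomial identity
\[
\widetilde q_n^{\{0\}}\,\overline{v_n}[j] + \cdots + \widetilde q_n^{\{r^2-1\}}\,\overline{v_{n+r^2-1}}[j] = \widetilde q_n^{\{r^2\}}\,\overline{v_{n+r^2}}[j]\,,
\]
with the same $\widetilde q_n^{\{t\}}$ for every $j$. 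It remains only to check that $\widetilde q_n^{\{r^2\}}$ is nowhere zero as a polynomial sequence: this follows because $sub_k(\widetilde q_n^{\{r^2\}}) = q_n^{\{r^2\}} \neq 0$ and $sub_k$ is injective, so $\widetilde q_n^{\{r^2\}} \neq 0$. Thus $\overline{v_n}[j]$ is \Ct-finite, and all the $\overline{v_n}[j]$ satisfy this one common recurrence, differing only in initial conditions.

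**Main obstacle.** The routine part is the bookkeeping; the one genuine subtlety is making sure the Gram-matrix/pseudo-inverse machinery of Lemma~\ref{th:matrix-to-rec-numbers} transfers to the ring $\mathbb{F}[x_1,\ldots,x_k]$. Over a field one exploits that $N_{n,s}^{*}N_{n,s}$ is nonsingular exactly when the columns $N_n^{\{s\}},\ldots,N_n^{\{r^2-1\}}$ are linearly independent; over a polynomial ring (an integral domain) one should instead phrase everything in terms of the fraction field $\mathbb{F}(x_1,\ldots,x_k)$, over which the linear-algebra argument of Lemma~\ref{th:matrix-to-rec-numbers} applies unchanged, and then observe that the quantities $\widetilde q_n^{\{t\}}$ that actually appear — being cofactor-type expressions multiplied through by the determinant $|N_{n,s}^{*}N_{n,s}|$ to clear denominators, exactly as in the derivation of Eq.~(\ref{eq:matrix-equation}) — are honest polynomials, not merely rational functions. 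Using the injectivity of $sub_k$ as above is the cleanest way to certify simultaneously that these polynomial sequences are the right ones and that the leading sequence $\widetilde q_n^{\{r^2\}}$ does not vanish.
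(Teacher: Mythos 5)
Your proof is correct and follows essentially the same route as the paper's: evaluate the polynomial data at the algebraically independent reals $d_1,\ldots,d_k$ supplied by Lemma~\ref{lem:injective}, apply Lemma~\ref{th:matrix-to-rec-numbers} to the resulting scalar system, and lift the recurrence back through the injective ring homomorphism $sub_k$. You in fact supply a justification the paper leaves tacit: the existence and C-finiteness of the polynomial lifts $q_n^{\{t\}}$ of the scalar coefficients $c_n^{\{t\}}$ (Lemma~\ref{lem:injective} only guarantees uniqueness, not that the $c_n^{\{t\}}$ lie in the image of $sub_k$) follow from your observation that the coefficients constructed inside Lemma~\ref{th:matrix-to-rec-numbers} arise from the matrix entries by ring operations together with indicator sequences, hence lie in the image of the ring homomorphism $sub_k$ with C-finite polynomial pre-images.
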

\begin{proof}
First note that due to the C-finiteness of $M_{n}$ and Eq. (\ref{eq:vector-rec-3}),
we may assume w.l.o.g. that the the matrices $M_{n}$ and vectors
$\overline{v_{n}}$ are all given over a finite extension field $\mathbb{F}$
of $\mathbb{C}$. In particular, we need that $\mathbb{F}$ is countable. 

Let $D_{k}=\{d_{1},\ldots,d_{k}\}$ be the set guaranteed in Lemma
\ref{lem:injective}. For every $n$, let $\overline{u_{n}}$ and
$L_{n}$ be the real vector respectively real matrix obtained from
$\overline{v_{n}}$ respectively $M_{n}$ by substituting $x_{1},\ldots,x_{k}$
with $d_{1},\ldots,d_{k}$. $L_{n}$ is a C-finite sequence of matrices
over $\mathbb{F}(d_{1},\ldots,d_{k})$ the extension field of $\mathbb{F}$
with $D_{k}$. We have for every $n$, 
\[
\overline{u_{n+1}}=L_{n}\overline{u_{n}}\,.
\]

By Lemma  \ref{th:matrix-to-rec-numbers}, there exists $n_{0}$ and
C-finite sequences over $\mathbb{F}(d_{1},\ldots,d_{k})$,\linebreak
$c_{n}^{\{0\}},\ldots,c_{n}^{\{r^{2}\}}$, such that for every $n>n_{0}$,
\[
c_{n}^{\{0\}}\overline{u_{n}}+\cdots+c_{n}^{\{r^{2}-1\}}\overline{u_{n+r^{2}-1}}=c_{n}^{\{r^{2}\}}\overline{u_{n+r^{2}}}
\]
and $q_{n}^{\{r^{2}\}}$ is non-zero. Using Lemma \ref{lem:injective},
there exist unique polynomials 
\[
q_{n}^{\{0\}}(x_{1},\ldots,x_{\xi}),\ldots,q_{n}^{\{r^{2}\}}(x_{1},\ldots,x_{\xi})
\]
 such that for every $n$, 
\[
q_{n}^{\{0\}}(d_{1},\ldots,d_{\xi})=c_{n}^{\{0\}}(d_{1},\ldots,d_{\xi})\,.
\]
Let $t(x_{1},\ldots,x_{\xi})$ be the polynomial given by 
\[
t(x_{1},\ldots,x_{\xi})=q_{n}^{\{0\}}\overline{v_{n}}+\cdots+q_{n}^{\{r^{2}-1\}}\overline{v_{n+r^{2}-1}}-q_{n}^{\{r^{2}\}}\overline{v_{n+r^{2}}}\,.
\]
substituting $d_{1},\ldots,d_{\xi}$ on both sides of the latter equation,
we get $sub_{\xi}(t)=0$, but this implies that $t(x_{1},\ldots,x_{\xi})$
is identically zero, since $sub_{\xi}(0)=0$ and $sub_{\xi}$ is injective. 

\end{proof}

\begin{proof}
[Proof of Theorem \ref{th:main-formal}] Let $P'=\{p_{0},\ldots,p_{\alpha}\}$
and $M_{n}:\, n\in\mathbb{N}$ be as guaranteed by Lemma \ref{lem:fv-bi-iterative}.
We have 
\[
\left(p_{0}(G_{n+1},\bar{X}),\ldots,p_{\alpha}(G_{n+1},\bar{X})\right)^{tr}=M_{n}\left(p_{0}(G_{n},\bar{X}),\ldots,p_{\alpha}(G_{n},\bar{X})\right)^{tr}\,.
\]
By Lemma \ref{lem:polynomials}, $p(G_{n}):\, n\in\mathbb{N}$ is
\Ct-finite. 
\end{proof}

\section{Examples of relatively iterative sequences \label{se:examples}}

Here we give explicit applications of Theorem \ref{mainth:1}. The
applications follow the basic ideas underlying the proof, but can
be significantly simplified given specific choices of a graph polynomial
and a bi-iterative family.

\subsection{The independence polynomial on $G_{n}^{2}$}

Let $G_{n}^{2}$ be as described in Example \ref{ex:families}. We
denote by $v_{0},\ldots,v_{n}$ the vertices of the underlying path
of $G_{n}^{2}$. Let $I_{A}(G_{n}^{2},x)$ ($I_{B}(G_{n}^{2},x)$)
be the generating functions counting independent sets $U_{1}$ in
$G_{n}^{2}$ such that $v_{n}$ belongs (resp. does not belong) to
$U_{1}$. Then, 
\begin{gather}
I(G_{n}^{2},x)=I_{A}(G_{n}^{2},x)+I_{B}(G_{n}^{2},x)\,.\label{eq:ind-sum}
\end{gather}
Now we give a matrix equation for computing $I_{A}(G_{n+1}^{2},x)$,$I_{B}(G_{n+1}^{2},x)$
and $I(G_{n+1}^{2},x)$ from $I_{A}(G_{n}^{2},x)$,$I_{B}(G_{n}^{2},x)$
and $I(G_{n}^{2},x)$: for all $m$, 
\begin{eqnarray}
\left(\begin{array}{c}
I_{A}(G_{m+1}^{2},x)\\
I_{B}(G_{m+1}^{2},x)
\end{array}\right) & = & M\left(\begin{array}{c}
I_{A}(G_{m}^{2},x)\\
I_{B}(G_{m}^{2},x)
\end{array}\right)\label{eq:matrix-ind}
\end{eqnarray}
 where 
\[
M=\left(\begin{array}{cc}
0 & x\\
1+nx & 1+nx
\end{array}\right)\,.
\]
The first row reflects the facts that if $v_{n+1}$ belongs to the
sets $U_{1}$ counted by $I_{A}(G_{n+1}^{2},x)$, $v_{n+1}$ and $v_{n}$
may not belong to the same $U_{1}$, and $v_{n+1}$ contributes a multiplicative factor of
$x$. The second row reflects that $v_{n+1}$ does not belong to
the sets $U_{1}$ counted in $I_{B}(G_{n+1}^{2},x)$, so independent
of whether $v_{n}$ is in $U_{1}$, there are two options: either
exactly one of the clique vertices adjacent to $v_{n+1}$ belong to
$U_{1}$ and contributes a factor of $x$, or no vertex of
that clique belongs to $U_{1}$, contributing a factor of $1$.

Eq. ({\ref{eq:matrix-ind}) holds both for $n$ and $n+1$, leading
to the recurrence relation 
\begin{eqnarray*}
I(G_{n+1}^{2},x) & = & (1+nx)I(G_{n}^{2},x)+x(1+(n-1)x)I(G_{n-1}^{2},x)\\
I(G_0^{2},x) & = & 1+x  \\ 
I(G_1^{2},x) & = & 1+3x+x^2   
\end{eqnarray*}
 using Eq. (\ref{eq:ind-sum}). This is a \Ct-finite recurrence, which
is also a P-recurrence. 

The number of independent sets of $G_{n+1}^2$ is $I(G_{n+1}^{2},1)$. Interestingly, 
the sequence $I(G_{n+1}^{2},1):n\in\mathbb{N}$ is in fact equal to the seemingly 
unrelated sequence (A052169) of \cite{OEIS}. This implies $I(G_{n+1}^{2},1)$
has an alternative combinatorial interpretation as the number of non-derangements of $1,\ldots,n+3$
divided by $n+2$. See \cite{ar:Petojevic2002} for a treatment of the related (A002467).

\subsection{\label{se:tutte-g4} The dichromatic polynomial on $G_{n}^{4}$}

Let $Z_{t}(P_{n+2})$ denote the dichromatic polynomial of $P_{n+2}$
such that the end-points of $P_{n+2}$ belong to the same connected
component iff $t=1$, for $t=0,1$. $Z_{t}(G_{n}^{4})$ is defined
similarly with respect to the most recently added path.

We have
\begin{eqnarray}
Z_{0}(G_{n}^{4}) & = & \left(\frac{v}{q}+1\right)^{2}Z_{0}(P_{n+2})\cdot Z_{0}(G_{n-1}^{4})\notag\\
 &  & +\left(2\frac{v}{q}+1\right)Z_{0}(P_{n+2})Z_{1}(G_{n-1}^{4})\notag\\
Z_{1}(G_{n}^{4}) & = & \frac{v^{2}}{q^{2}}Z_{0}(P_{n+2})Z_{1}(G_{n-1}^{4})\nonumber \\
 &  & +\left(\frac{v^{2}}{q}+2\frac{v}{q}+1\right)Z_{1}(P_{n+2})Z_{1}(G_{n-1}^{4})\nonumber \\
 &  & +\left(\frac{v}{q}+1\right)^{2}Z_{1}(P_{n+2})Z_{0}(G_{n-1}^{4})\notag
\end{eqnarray}
by dividing into cases by considering the end-points $u,v$ of $P_{n+2}$
and the end-points $u',v'$ of the $P_{n+1}$ in $G_{n-1}^{4}$ and
the edges $\{u,v\}$ and $\{u',v'\}$ with respect to the iteration
variable of $Z_{t}(G_{n}^{4})$. For example, the coefficient of $Z_{1}(P_{n})Z_{1}(G_{n-1}^{4})$
corresponds exactly to the case that $u,v$ are in the same connected
components in the graph spanned by $A$ ($A$ is the iteration variable
in the definition of $Z$ in Eq. (\ref{eq:dichrom})). If at least
one of the edges $\{u,v\}$ and $\{u',v'\}$ belongs to $A$, then
$G_{n-1}^{4}$ and $G_{n}^{4}$ have the same number of connected
components, but in $Z_{1}(P_{n+2})Z_{1}(G_{n-1}^{4})$ we have that
$Z_{1}(P_{n+2})$ contributes an additional factor of $q$ which should
be cancelled, so the weight in the case is $\frac{v^{2}+2v}{q}$.
If none of the two edges belongs to $A$, then $u,v$ are in a different
connected component from $u',v'$, so no correction is needed and
the weight is $1$. 

Using that $Z(G_{n}^{4})=Z_{0}(G_{n}^{4})+Z_{1}(G_{n}^{4})$, we get:

\begin{eqnarray}
Z_{0}(G_{n}^{4}) & = & \frac{v^{2}}{q^{2}}Z_{0}(P_{n+2})\cdot Z_{0}(G_{n-1}^{4})\notag\\
 &  & +\left(2\frac{v}{q}+1\right)Z_{0}(P_{n+2})Z(G_{n-1}^{4})\label{eq:z0gn}\\
Z(G_{n}^{4}) & = & \left(\left(\frac{v}{q}+1\right)^{2}Z(P_{n+2})+\frac{v^{2}}{q}\left(1-\frac{1}{q}\right)Z_{1}(P_{n+2})\right)Z(G_{n-1}^{4})\notag\\
 &  & -\frac{v^{2}}{q}\left(1-\frac{1}{q}\right)Z_{1}(P_{n+2})Z_{0}(G_{n-1}^{4})\label{eq:zgn}
\end{eqnarray}

Let $m\in\mathbb{N}$. Eqs. (\ref{eq:z0gn}) and (\ref{eq:zgn}) hold
for every $n$, in particular for $m$ and $m+1$, and from these
equations we can extract a recurrence relation for $Z(G_{m+1}^{4})$
using $Z(G_{m}^{4})$ and $Z(G_{m-1}^{4})$ by canceling out $Z_{0}(G_{m}^{4})$
and $Z_{0}(G_{m-1}^{4})$: 
\begin{eqnarray*}
Z(G_{m+1}^{4}) & = & Z(G_{m}^{4})\left(\left(\frac{v}{q}+1\right)^{2}Z(P_{m+3})+\frac{v^{2}}{q}\left(1-\frac{1}{q}\right)Z_{1}(P_{m+3})\right)\\
 &  & -Z(G_{n-1}^{4})\Bigg[\frac{v^{2}}{q}\left(1-\frac{1}{q}\right)Z_{1}(P_{m+3})\cdot\Bigg(Z_{0}(P_{m+2})\frac{v^{2}}{q^{2}}+\\
 &  & \frac{\left(\frac{v}{q}+1\right)^{2}Z_{0}(P_{m+2})Z(P_{m+2})}{\left(q-1\right)Z_{1}(P_{m+2})}+\left(2\frac{v}{q}+1\right)Z_{0}(P_{m+2})\Bigg)\Bigg]
\end{eqnarray*}
Using this recurrence relation, it is easy to compute the dichromatic
and Tutte polynomials. E.g., $Z(G_{m}^{4},3,-1)$, the number of $3$-proper
colorings of $G_{m}^{4}$, and $|Z(G_{m}^{4},-1,-1)|$, the number
of acyclic orientations of $G_{m}^{4}$, are given, for $m=0,\ldots,6$,
by

\begin{align*}
Z(G_{m}^{4},3,-1): & \mbox{6 30 318 6762 288354 24601830 4198550862}\\
|Z(G_{m}^{4},-1,-1)|: & \mbox{6 90 2826 179874 22988394 5882561010 3011536790874}
\end{align*}

\section{Conclusion and further research \label{se:conclusion}}

We introduced a natural type of recurrence relations, \Ct-recurrences,
and proved a general theorem stating that a wide class of graph polynomials
have recurrences of this type on some families of graphs. We gave
explicit applications to the Tutte polynomial and the independence
set polynomial. We further showed that quadratic sub-sequence of C-finite
sequences are \Ct -finite. 

A natural generalization of the notion of \Ct-recurrences could be
to allow even sparser sub-sequences. We say a sequence $a_{n}$ is
\emph{C$^{\,1}$-finite} if it is C-finite. We say a sequence is \emph{C$^{\,r}$-finite}
if it has a linear recurrence relation of the form 
\[
c_{n}^{(s)}a_{n+s}=c_{n}^{(s-1)}a_{n+s-1}+\cdots+c_{n}^{(0)}a_{n}
\]
where $c_{n}^{(0)},\ldots,c_{n}^{(s)}$ are C$^{r-1}$-finite. This
definition coincides with the definition of \Ct-finite.
\begin{problem}
\label{prob:a} Can we find families of graphs for which the Tutte polynomial and other
\MSOL-polynomials have C$^r$-recurrences?
\end{problem}

\bibliographystyle{plain}

\end{document}